\numberwithin{equation}{section}
\theoremstyle{definition} 
\newtheorem{definition}{Definition}[section] 
\theoremstyle{plain}
\newtheorem{theorem}{Theorem}[section]
\newtheorem{proposition}{Proposition}[section]
\newtheorem{corollary}{Corollary}[theorem]
\newtheorem{lemma}[theorem]{Lemma}
\theoremstyle{remark}
\renewcommand{\Rn}[1]{\mathbb{R}^{#1}}
\newcommand{\Rnm}[2]{\mathbb{R}^{#1 \times #2}}
\newcommand{\C}{\mathbb{C}}
\newcommand{\Cnm}[2]{\mathbb{C}^{#1 \times #2}}
\newcommand{\K}{\mathcal{K}_\sa} 
\newcommand{\V}{V} 
\newcommand{\Vplus}{\Tilde{\V}} 
\newcommand{\VOm}{S}
\newcommand{\ps}[2]{\langle #1 , #2 \rangle}
\newcommand{\psOm}[2]{\langle \Om #1 , \Om #2 \rangle }
\newcommand{\Om}{\Omega} 
\newcommand{\POm}{P_{\K}^{\Om}}
\newcommand{\Pkr}{P_{\K}}
\newcommand{\A}{A} 
\newcommand{\Hsa}{H_\sa} 
\newcommand{\Hba}{H_{\ba}} 
\newcommand{\Hbap}{\Hba^+} 
\newcommand{\Hsaplus}{\Tilde{\Hsa}} 
\newcommand{\eivec}{u} 
\newcommand{\eival}{\lambda} 
\newcommand{\rivec}{\tilde{\eivec}} 
\newcommand{\yrivec}{{y}} 
\newcommand{\rival}{\tilde{\eival}} 
\newcommand{\vinit}{v_1} 
\newcommand{\vinitj}{v_1^+} 
\newcommand{\rsa}{r_\sa} 
\newcommand{\rsaplus}{\Tilde{r}_\sa}
\newcommand{\rba}{r_{\ba}}
\newcommand{\Q}{Q} 
\newcommand{\shi}{\mu} 
\newcommand{\sa}{k} 
\newcommand{\nshi}{p} 
\newcommand{\ba}{\sa + \nshi} 
\newcommand{\dsk}{d} 
\newcommand{\polp}{\psi_{\nshi}}
\newcommand{\Omperp}{\perp^{\Om}}
\DeclareMathOperator{\Sp}{span}
\newcommand{\Span}[1]{\Sp \{ #1 \} }
\renewcommand{\epsilon}{\varepsilon}
\newcommand{\rArno}{randomized Arnoldi}
\newcommand{\cpol}[2]{p_{#1}(#2)}
\title{Randomized Implicitly Restarted Arnoldi method for the non-symmetric eigenvalue problem}
\author{Jean-Guillaume de Damas\thanks{Sorbonne Universite, Inria, Universite de Paris Laboratoire Jacques-Louis Lions, Paris, France}, Laura Grigori\thanks{Laboratory for Simulation and Modelling, Paul Scherrer Institute, Switzerland; Institute of Mathematics, EPFL, Switzerland}}
\date{\today}
\renewenvironment{abstract}
 {\par\textbf{\abstractname.}\ \ignorespaces}
 {\par\medskip}
\newenvironment{keywords}
 {\par\textbf{Keywords.}\ \ignorespaces}
 {\par\medskip}
\newenvironment{AMS}
 {\par\textbf{AMS subject classification.}\ \ignorespaces}
 {\par\medskip}
\begin{document}

\maketitle

\begin{abstract}
    In this paper, we introduce a randomized algorithm for solving the non-symmetric eigenvalue problem, referred to as randomized Implicitly Restarted Arnoldi (rIRA).  This method relies on using a sketch-orthogonal basis during the Arnoldi process while maintaining the Arnoldi relation and exploiting a restarting scheme to focus on a specific part of the spectrum.
    We analyze this method and show that it retains useful properties of the Implicitly Restarted Arnoldi (IRA) method, such as restarting without adding errors to the Ritz pairs and  implicitly applying polynomial filtering. Experiments are presented to validate the numerical efficiency of the proposed randomized eigenvalue solver.
\end{abstract}

\begin{keywords}
    non-symmetric eigenvalue problem, randomization, Krylov subspace methods, restarting Arnoldi
\end{keywords}

\begin{AMS}
    65F10, 65F15, 65F25, 15B52
\end{AMS}

\section{Introduction}
The problem of finding the eigenvalues and/or eigenvectors of a given linear transformation arises in many applications. It consists in finding a set of eigenpairs $(\eivec, \eival)$ of a matrix $\A \in \Rnm{n}{n}$, with $\eivec \in \Rn{n}$, $\norm*{\eivec} = 1$, and $\eival \in \C$ such that
\begin{equation}
    \A \eivec = \eival \eivec.
\end{equation}
This paper focuses on the computation of a small subset of eigenpairs of a matrix $\A$ which may be non-symmetric, large and sparse. A well-known tool to extract certain eigenpairs of $\A$ is the Rayleigh-Ritz method, see a detailed analysis in \cite{saad2011numerical}. It consists of a projection in a $\sa$ dimensional subspace $\K$ with $\sa \ll n$, whose capacity to accurately approximate eigenpairs depends on the subspace $\K$ and the projector.
Concerning the subspace, it is relevant to use Krylov subspaces defined as
$$\K(\A,\vinit) = \Span{\vinit,\A \vinit,\A^2 \vinit,\dots,\A^{\sa-1} \vinit},$$
for a given starting vector $\vinit$. They are simple to obtain provided that matrix-vector products are affordable, and they contain desirable spectral information of $\A$. Regarding the projection, the Arnoldi's method, also presented in \cite{saad2011numerical}, performs an orthogonal projection into $\K$ which is well suited for stable computations in numerical linear algebra. This technique requires the calculation of an orthogonal basis for $\K$ and its storage, which becomes expensive in memory and floating point operations (flops) as $\sa$ increases. To alleviate this problem, different restarting schemes have been proposed in the literature. One of them is the Implicitly Restarted Arnoldi method (IRA), introduced by D. Sorensen in 1992 in \cite{Sorensen1992ImplicitApplicationPolynomial} and extended by R. Lehoucq's thesis \cite{Lehoucq1995DeflationTechniquesImplicitly}. A key feature of IRA is its ability to focus on a specific part of the spectrum without explicitly modifying $\A$. The motivating result for this method is that an invariant subspace of dimension $\sa$ of $\A$ can be constructed using an iterative procedure that starts using information from the Schur decomposition of $\A$. That is from an orthogonal $Q \in \Rnm{n}{\sa}$ satisfying $AQ = QR$, where $R$ is upper-triangular with eigenvalues of $\A$ on its diagonal. The iterative restart framework, called outer iteration, is used to drive the starting vectors of successive Arnoldi factorizations, called inner iteration, into the range of this a priori unknown $Q$. If an invariant subspace of $\A$ is obtained, exact eigenpairs can be computed from it. The cost of the orthogonalization process can be reduced by using randomization. Randomization, also known as sketching, is a dimensionality reduction technique that relies on the Johnson-Lindenstrauss (JL) lemma which states that the distances between $\sa$ vectors of $\Rn{n}$ can be preserved in a $\dsk$-dimensional subspace within a $(1 \pm \epsilon)$ factor, where $\dsk \approx \log \sa / \epsilon^2$, see \cite{Venkatasubramanian2011JohnsonLindenstraussTransform}. When the subspace spanned by the $\sa$ vectors is not known in advance, \emph{oblivious subspace embeddings} can be used.  These embeddings allow to satisfy the JL lemma for any subspace of dimension $\sa$ with high probability. It is proven in \cite{WoodruffComputationalAdvertisingTechniques2014} that matrices drawn from the Gaussian distribution are oblivious subspace embeddings when $\dsk \approx \sa / \epsilon^2$, but other embeddings from various distributions can be considered as notably reviewed in \cite{MartinssonRandomizednumericallinear2020}[Sections 8,9].

Randomization was successfully used to derive fast and accurate methods for solving least squares problems, computing the low rank approximation of a matrix or selecting a set of columns, for a detailed description see \cite{HalkoFindingStructureRandomness2011, WoodruffComputationalAdvertisingTechniques2014, Murray2022RandomizedNumericalLinear} for example. It was used in the recent years in the context of Krylov subspace methods, in particular for solving linear systems of equations \cite{BalabanovRandomizedGramSchmidt2022, NakatsukasaFastAccurate2021, Balabanov2021RandomizedblockGram, Timsit2023RandomizedOrthogonalProjection}. Randomized Arnoldi introduced in \cite{BalabanovRandomizedGramSchmidt2022} relies on a randomized orthogonalization process that produces a well conditioned basis of the Krylov subspace and thus can be efficiently used in a randomized version of GMRES.  
A different approach consists in using sketching independently of the construction of the Krylov basis, as in sketched GMRES \cite{NakatsukasaFastAccurate2021}, or sketch and select \cite{Guettel2023sketchselectArnoldi} that aims at building a well conditioned Krylov basis by appealing to the best subset selection problem.  Restarting in the context of linear systems is discussed in \cite{Burke2023GMRESrandomizedsketching,Jang2024RandomizedflexibleGMRES}. Randomization can be also used for computing functions of matrices, see \cite{Guettel2023RandomizedSketchingKrylov,Cortinovis2022SpeedingKrylovsubspace,Palitta2023Sketchedtruncatedpolynomial}. 

Few results exist for the eigenvalue problem due its complexity.  In the broader context of the generalized eigenvalue problem $\A \eivec = \eival B \eivec$, \cite{Saibaba2015Randomizedalgorithmsgeneralized} relies on randomized low rank approximation for the Hermitian case,  while \cite{Kalantzis2021FastRandomizedNon} exploits the randomized range-finder algorithm to extract eigenvalues situated in a disk for the pencil $(A,B)$ in the non-Hermitian case.  Both use orthogonal bases. Randomness is used in other algorithms such as in randomized FEAST in \cite{Yin2016randomizedFEASTalgorithm}, where FEAST is extended to the non-Hermitian case by using a random subspace instead of a Krylov one. In the nonlinear case, the randomization of AAA from \cite{NakatsukasaAAAAlgorithmRational2018} is investigated in \cite{GuettelRandomizedsketchingnonlinear2022}.  A randomized version of Rayleigh-Ritz is briefly introduced in \cite{Balabanov2021RandomizedblockGram, NakatsukasaFastAccurate2021}. 

In this article, we introduce the randomized Implicitly Restarted Arnoldi (rIRA) method, that relies on a sketched orthonormal basis and a restarting scheme that allows to seek a specific subset of eigenpairs of a non-symmetric matrix $\A$. It relies on the \rArno{} factorization \cite{BalabanovRandomizedGramSchmidt2022} of the matrix $\A$,
\begin{equation}
    \A \V = \V \Hsa + \rsa e_k^T, 
\end{equation}
where $\V$ is an $\Om$-orthonormal (also called sketched
orthonormal) basis of the Krylov subspace $\K$. This means that $(\Om \V)^T (\Om \V) = I_\sa$, where $\Om$ is a  subspace embedding for $\K$. If $\tilde{Q}$ is the $\Om$-orthonormal factor of the $\Om$-orthonormal Schur decomposition of $A$, that is it satisfies 
\begin{equation}
    \A \tilde{Q} = \tilde{Q} T
\end{equation}
with $T$ upper triangular, then we show that $\rsa$ is zero if and only if the starting vector $\V(:,1) = \vinit$ lies in the subspace spanned by $\tilde{Q}$ (see Theorem~\ref{th:schur-invariant}). 
In this situation, $\V$ is an invariant subspace of $\A$ and we can extract exact eigenpairs from it. We show that the update of a \rArno{} factorization using the deterministic shifted QR algorithm results into a legitimate \rArno{} factorization, and this leads to an iterative restarting process. The convergence can be monitored using sketches of the residuals that are easily computable and  approximate well the residual error norms of the form $\norm*{\A \rivec - \rival \rivec} / \norm*{\rivec}$. 

We provide a theoretical analysis that shows that some of the results defining the convergence behavior of IRA hold for randomized IRA, up to a factor of $1 + O(\epsilon)$ and with high probability.
We start by considering \rArno{} as an oblique projection $\POm$ onto $\K$, following the work of \cite{Balabanov2021RandomizedblockGram} where it is shown that randomized Rayleigh-Ritz solves the eigenvalue problem $\POm \A \POm \rivec = \rival \rivec$. We then derive bounds on the distance from an eigenvector $u$ to the Krylov subspace through the oblique projection $\POm$, that is $\norm*{(I - \POm)u}$, by relying on bounds which use an orthogonal projection, derived in \cite{saad2011numerical,Bellalij2016distanceeigenvectorKrylov,Bellalij2010FurtherAnalysisArnoldi}.
We show an optimality property satisfied by the characteristic polynomial $\hat{p}_{\sa}$ of $\Hsa$ (see Theorem~\ref{th:charac-poly-min}), which characterizes the randomized oblique projection as a minimizer of $\norm*{\Om p(\A) \vinit}$ among the set $\mathcal{PM}_\sa$ of all monic polynomials $p$ of degree $\sa$, that is
\begin{equation}
    \hat{p}_{\sa} = \arg \min_{p \in \mathcal{PM}_\sa } \norm*{\Om p(\A) \vinit}.
\end{equation}
This result is used to prove convergence of rIRA for a specific shifts selection strategy.
We also show that, similarly to  \cite{Sorensen1992ImplicitApplicationPolynomial}, a \rArno{} factorization is uniquely defined by its starting vector in the randomized implicit Q theorem (see Theorem~\ref{th:r-implicit-Q}),
\begin{equation}
    \left\{ \begin{array}{c}
        AV = VH + re_k^T \\
        AQ = QG + fe_k^T 
    \end{array} \right.
    \; \text{are equal if their starting vectors are equal, i.e.\@ if} \; V(:,1) = Q(:,1).
\end{equation}
We also prove that polynomial filtering is implicitly applied when restarting, that is if $\Vplus$ is the new Krylov basis after the update of the \rArno{} factorization, then its associated Krylov subspace is
\begin{equation}
    \Span{\Vplus} = \K(\A, \polp(\A) \vinit)
\end{equation}
where $\polp(\A)$ is a polynomial that annihilates the coordinates of $\vinit$ in unwanted directions. Moreover, we show that no error is added to the Ritz pairs when restarting.

The paper is organized as follows. Section~\ref{sec:preliminaries} introduces notations and several definitions and results used throughout the paper. In section~\ref{sec:orthog} we introduce an algorithm for computing a sketched orthonormal basis of a Krylov subspace, referred to as \emph{randomized Classical Gram-Schmidt 2} (rCGS2) and we compare it with other randomized orthogonalization processes for an ill-conditioned case.  We then recall the \rArno{} procedure defined in \cite{NakatsukasaFastAccurate2021,Balabanov2021RandomizedblockGram} and its usage in randomized Rayleigh-Ritz.
The main algorithm is presented in section~\ref{sec:rIRA} along with justification on why we recover a \rArno{} factorization after the update step. We also detail convergence criterion and how to cheaply update sketched quantities. Theoretical results of \rArno{} and rIRA are derived in section~\ref{sec:analysis}, notably on the convergence of the \rArno{} method, the uniqueness of a \rArno{} factorization, and its usage in the restart procedure of rIRA. Section~\ref{sec:numericals} studies the efficiency of rIRA by using its implementation in the Julia programming language. We start by outlining the capacity of rIRA to target a specific part of the spectrum, by focusing on the smallest or largest modulus eigenvalues. We then show that rIRA reaches the same accuracy as the ARPACK implementation of IRA, while requiring less time per iteration for different matrices considered in our test set.

\section{Preliminaries}
\label{sec:preliminaries}
\subsection{Notations}
We consider the usual $\Rn{n}$ vector space. The inner product $\ps{.}{.}$ is $\ps{x}{y} \coloneqq x^T y$. The induced norm is the 2-norm, denoted here by $\norm*{x} \coloneqq \norm*{x}_2 = \sqrt{\ps{x}{x}}$, unless stated otherwise. A unit vector $v$ is such that $\norm*{v} = 1$. If a vector $v$ has all its coordinates zero, we use $v = 0 \in \Rn{n} \iff \norm*{v} = 0 \in \mathbb{R}$.  Two vectors $x,y$ are orthogonal when $\ps{x}{y} = 0$ and a matrix $Q \in \Rnm{n}{m}$ is orthonormal when $Q^T Q = I_m$, where $I_m$ is the square identity matrix of dimension $m$. Sub-matrices are noted $H(1:i,1:j)$ for the $i$ first rows and $j$ first columns. Indexes are omitted when all rows (or columns) are selected such as in $H(:,i)$ which selects the i'th column of $H$. Entries are noted $h_{i,j} \coloneqq H(i,j)$. Given $M \in \Rnm{n}{\sa}$, $\Span{M}$ denotes the column subspace $\mathcal{M} = {\Span{M(:,1),\dots,M(:,\sa)}}$.

\subsection{The Rayleigh-Ritz procedure and the Arnoldi algorithm}

Given $\A \in \Rnm{n}{n}$ and some $i \in \mathcal{I} \subset \{1,\dots,n \}$, we consider the problem of computing the eigenpairs $(\eivec_i, \eival_i)$, where
\begin{equation}
    \A \eivec_i = \eival_i \eivec_i, \quad \eivec_i \in \Rn{n}, \; \norm{\eivec_i} = 1, \; \eival_i \in \C,
    \label{eq:EVP}
\end{equation}
When $n$ is large, a common technique is to seek an approximation $\rivec_i$ in a smaller $\sa$-dimensional subspace $\K$, with $\sa \ll n$. The $\sa$ degrees of freedom arising from this choice are fixed by imposing the Petrov-Galerkin condition $\A \rivec_i - \rival_i \rivec_i \perp \K $. 
This leads to the Rayleigh-Ritz procedure with orthogonal projections. One of the most popular choices is to use a Krylov subspace, defined from an initial vector $\vinit \in \Rn{n}$ as
\begin{equation}
   \K \coloneqq \Span{\vinit, \A \vinit, \dots, \A^{\sa-1}\vinit}, 
\end{equation}
see \cite[Chapters 4 and 6]{saad2011numerical} for more details. For numerical stability, the Arnoldi iteration constructs an orthonormal basis of $\K$ by using Gram-Schmidt, and produces the factorization
\begin{equation}
    \A \V = \V \Hsa + h_{\sa+1,\sa} v_{\sa+1} e_\sa^T,
    \label{eq:arnoldiRelation}
\end{equation}
where $\V \in \Rnm{n}{k}$ is an orthonormal basis of $\K$, $v_{\sa+1} \in \Rn{n}$ is a unit vector orthogonal to $\V$, and $\Hsa \in \Rnm{k}{k}$ is an upper Hessenberg matrix, that is upper triangular with non zero elements on the first subdiagonal. Indeed the elements $h_{i+1,i}$ correspond to the norm of $v_i$ after orthogonalization against the previous $v_j$'s and before normalization. They are thus positive, and when all of them are non zero, i.e.\@ $h_{i+1,i} > 0, \; i=1,\dots,\sa-1$, we say that $\Hsa$ is unreduced. It is equivalent to the fact that $\dim(\K) = \sa$ since all $v_i$'s are non zeros and orthogonal to each other. In the following we define $\rsa \coloneqq h_{\sa+1,\sa} v_{\sa+1}$ for brevity. We summarize this discussion in the following definition.
\begin{definition}[Rayleigh-Ritz approximation for eigenpairs using Arnoldi iteration]
    \label{eq:rrEVP}
    Given a unit vector $\vinit \in \Rn{n}$, let $\K = \Span{\vinit, \A \vinit, \dots, \A^{\sa-1}\vinit} \subset \Rn{n}$ of dimension $\sa$. The resulting Arnoldi factorization is 
    \begin{equation}
        \label{eq:arnoDef}
       \A \V = \V \Hsa + \rsa e_\sa^T,  
    \end{equation}
    with $\V \in \Rnm{n}{k}$ an orthonormal basis for $\K$ and  $\V(:,1) = \vinit$. Given the eigenpairs $(\yrivec_i, \rival_i)$ of $\Hsa \in \Rnm{k}{k}$, the Ritz pairs of $\A$ are $(\rivec_i \coloneqq \V \yrivec_i, \rival_i)$ such that 
    \begin{equation}
        \A \rivec_i - \rival_i \rivec_i \perp \K \; \text{with} \; \rivec_i \in \K, \; \norm{\rivec_i} = 1, \; \rival_i \in \C.
    \end{equation}
\end{definition}
By multiplying Equation~\eqref{eq:arnoDef} with $\yrivec_i$, we obtain
$$\A (\V \yrivec_i) - \lambda_i (\V \yrivec_i) = \rsa e_\sa^T \yrivec_i. $$ 
This relation reflects the errors made when approximating the eigenpairs of $\A$ by the Ritz pairs and shows the close relation between $\rsa$ and the residual $\A \rivec_i - \lambda_i \rivec_i$. The happy break down of the Arnoldi process happens when $h_{\sa+1,\sa} = \norm*{\rsa} = 0$, and thus 
$$\A \V = \V \Hsa. $$
This means that an invariant subspace of $\A$ was found while computing $\K$ starting from $\vinit$, with $\V$ being a basis for this invariant subspace. In this situation, an eigenpair $(\rival_i, \yrivec_i)$ of $\Hsa$ corresponds to an exact eigenpair $(\rival_i, \V \yrivec_i)$ of $\A$,
$$ \A \V \yrivec_i = \V \Hsa \yrivec_i = \V \rival_i \yrivec_i \iff \A \rivec_i = \rival \rivec_i .$$
Otherwise, when $h_{\sa +1,\sa} \neq 0$, or equivalently $\rsa \neq 0$, the residual error made on a Ritz pair $(\rival_i, \rivec_i)$ of $\A$ satisfies the following relation available at no significant extra cost :
\begin{equation}
    \norm{\A \rivec_i - \rival_i \rivec_i}  = \norm{h_{\sa+1,\sa} v_{\sa+1} e_\sa^T \yrivec_i} = \norm{\rsa} \abs{e_\sa^T \yrivec_i} = h_{\sa+1,\sa} \abs{e_\sa^T \yrivec_i}.
    \label{eq:residualNorm}
\end{equation}

\subsection{The implicitly restarted Arnoldi method }

Restarting an Arnoldi process is a technique that seeks to build a new factorization by using a starting vector $\vinitj$ that uses information from a previous decomposition of size $\sa$ started from $\vinit$. Given that Arnoldi becomes expensive for large $n$ and $\sa$, in terms of memory to store the basis vectors or in terms of computation for the orthonormalization process, restarting the Arnoldi process is important. Efforts focused on computing a new $\vinitj$ such that $\K(\A,\vinitj)$ is more likely to be an invariant subspace for $\A$. An approach consists in constructing $\vinitj$ as a linear combination of the previous Ritz vectors $(\rivec_1,\dots,\rivec_\sa)$. Indeed, given $\sa$ exact eigenvectors $(\eivec_1,\dots,\eivec_\sa)$ of $\A$, performing a second Arnoldi process for $\sa$ steps starting with $\vinitj \coloneqq \sum_{i = 1}^{\sa}\alpha_i \eivec_i$ for some $\alpha_i \in \mathbb{R}$ gives in exact arithmetic
$$\Span{\vinitj,\dots,\A^{\sa-1}\vinitj} = \Span{\eivec_1,\dots,\eivec_\sa},$$ 
and $\K(\A,\vinitj)$ is an invariant subspace. Since the Ritz vectors approximate the exact eigenvectors and are available, it seems appropriate to use them. However, R. Morgan shows in \cite[Theorem 1]{Morgan1996restartingArnoldimethod} in a counter example that using $\vinitj \coloneqq \rivec_1 + \alpha \rivec_2$ with an arbitrary $\alpha$ produces a subspace $\Span{\vinitj,\A \vinitj}$ containing a new Ritz vector $\rivec_1^+$ that has a larger error as an approximation of the eigenvector sought $\eivec_1$ than the initial Ritz vector $\rivec_1$. It is subsequently stated in \cite{Morgan1996restartingArnoldimethod} that the only choice of $\alpha$ that does not increase the error in the approximation is $\alpha = - \frac{\beta_{\sa1}}{\beta_{\sa2}}$, where $\beta_{\sa i} \coloneqq h_{\sa+1,\sa} \abs*{e_k^T\yrivec_{i}}$ from Equation~\eqref{eq:residualNorm},  i.e.\@ it weights the linear combination by the residual errors committed on each Ritz pair. This generalizes to a combination of $\sa$ Ritz vectors and is then called explicit restarting.  An equivalent yet more stable method is the implicitly restarted Arnoldi algorithm (IRA) introduced by D. Sorensen in \cite{Sorensen1992ImplicitApplicationPolynomial}. R. Morgan proves in \cite[Theorem 2]{Morgan1996restartingArnoldimethod} that IRA is equivalent to computing the exact $\alpha_i$'s so that no error is added at each restart.

The original idea of Sorensen in \cite{Sorensen1992ImplicitApplicationPolynomial} is to update an Arnoldi factorization that started from $\vinit$ to implicitly modify the starting vector into a new $\vinitj$ without needing to  fully compute the factorization iteratively again. This $\vinitj$ has coordinates that are zero in unwanted directions, which makes $\K(\A,\vinitj)$  a better subspace to extract a set of wanted eigenpairs from $\A$.
The following discussion specifies this procedure. We state a result defining the real Schur decomposition of $\A$, for which the proof can be found in \cite{Golub2013Matrixcomputations}[Chapter 7].

\begin{proposition}[real Schur decomposition]
    If $A \in \Rnm{n}{n}$ then there exists an orthonormal $Q \in \Rnm{n}{n}$ such that 
    \begin{equation}
        AQ = QR 
        \label{eq:SchurDecompo}
    \end{equation}
    where $R \in \Rnm{n}{n}$ is block upper triangular and has the same eigenvalues as $\A$. Its diagonal blocks are of size $1 \times 1$ or $2 \times 2$, the latter accounting for complex conjugate pairs of eigenvalues. The columns of $Q$ are called Schur vectors. They can be chosen so that the eigenvalues $\eival_i$ of $\A$ appear in any order along the diagonal of $R$.
\end{proposition}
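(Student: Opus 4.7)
The plan is to proceed by induction on $n$, peeling off one leading diagonal block at a time. At each step we choose an eigenvalue of $A$ according to the desired ordering and reduce to an orthogonal similarity on a smaller matrix. The base case $n=1$ is trivial; the induction handles $n \geq 2$ by reducing to dimension $n-1$ (real eigenvalue chosen) or $n-2$ (complex conjugate pair chosen).

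For the inductive step, suppose the result holds for all matrices of size less than $n$. Pick a target eigenvalue $\lambda$ of $A$. If $\lambda \in \mathbb{R}$, let $v \in \mathbb{R}^n$ be a unit eigenvector, extend it to an orthonormal basis, and collect the vectors in $Q_1 = [\,v \mid W\,] \in \mathbb{R}^{n \times n}$. Since $Av = \lambda v$, the first column of $Q_1^T A Q_1$ is $\lambda e_1$, so
\begin{equation*}
    Q_1^T A Q_1 = \begin{pmatrix} \lambda & s^T \\ 0 & A' \end{pmatrix},\qquad A' \in \mathbb{R}^{(n-1)\times(n-1)}.
\end{equation*}
Apply the inductive hypothesis to $A'$ to obtain orthonormal $Q' \in \mathbb{R}^{(n-1)\times(n-1)}$ with $A' Q' = Q' R'$, and set $Q = Q_1 \,\mathrm{diag}(1, Q')$ to produce the desired decomposition with a $1\times 1$ leading block.

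If instead $\lambda = \alpha + i\beta$ with $\beta \neq 0$, then $\bar{\lambda}$ is also an eigenvalue (since $A$ is real). Writing a complex eigenvector as $x + iy$ with $x,y \in \mathbb{R}^n$ and equating real and imaginary parts of $A(x+iy) = (\alpha + i\beta)(x+iy)$ yields $Ax = \alpha x - \beta y$ and $Ay = \beta x + \alpha y$, so $\mathrm{span}\{x,y\}$ is a real $A$-invariant subspace. One verifies that $x,y$ are linearly independent: a nontrivial real dependence would force $\beta = 0$. Orthonormalize $x,y$ via Gram–Schmidt into $q_1,q_2$, extend to an orthonormal basis $Q_1 = [\,q_1, q_2 \mid W\,]$, and obtain
\begin{equation*}
    Q_1^T A Q_1 = \begin{pmatrix} B & * \\ 0 & A' \end{pmatrix},\qquad B \in \mathbb{R}^{2\times 2},\ A' \in \mathbb{R}^{(n-2)\times(n-2)},
\end{equation*}
where $B$ has eigenvalues $\lambda,\bar\lambda$ by similarity. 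Apply the inductive hypothesis to $A'$ to finish. The ordering claim follows because at each level of the recursion we were free to pick any eigenvalue (or complex conjugate pair) to extract next.

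The main obstacle is the complex case: establishing that the two-dimensional real subspace $\mathrm{span}\{x,y\}$ is genuinely invariant and actually two-dimensional, and then showing that the resulting $2\times 2$ block $B$ carries the conjugate pair $\{\lambda,\bar\lambda\}$. Once this block triangularization is in place, the real case is routine and the recursive assembly of $Q$ from orthogonal factors preserves orthonormality automatically, since a product of orthonormal matrices (with a block-diagonal identity extension) is orthonormal.
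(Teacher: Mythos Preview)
Your inductive argument is correct and is the standard textbook proof. Note, however, that the paper does not give its own proof of this proposition at all; it simply states the result and refers to \cite{Golub2013Matrixcomputations}, Chapter~7, for the proof. The argument you have written is essentially the one found there, so there is nothing to contrast.
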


In the remaining of this paper, block upper triangular in the context of a real Schur factorization is also referred to as upper triangular. This theorem shows that every matrix $\A$ is orthonormally similar to an upper triangular one that has the eigenvalues of $\A$ on the diagonal. Moreover, note that if $q_\sa$ is the $\sa$-th column of $Q$, then
\begin{equation}
   \A q_\sa = \eival_\sa q_\sa + \sum_{i=1}^{\sa-1} r_{i,k} q_i. 
\end{equation}
Thus $\Span{q_1,\dots,q_\sa}$ is an invariant subspace of $\A$. 
This allows to define a $\sa$ partial Schur factorization $AQ = QR$, with $Q \in \Rnm{n}{\sa}, Q^TQ = I_\sa$, and $R \in \Rnm{\sa}{\sa}$ upper triangular. Given an arbitrary subset of eigenvalues $\{\eival_1,\dots,\eival_\sa \}$, this factorization is always obtainable by taking the first $\sa$ Schur vectors from Equation~\eqref{eq:SchurDecompo} where these Schur vectors are chosen to place $\{\eival_1,\dots,\eival_\sa \}$ on the top-left part of the $n \times n$ upper triangular matrix. Sorensen then shows a main motivational result.
\begin{theorem}[from Theorem 2.9 of \cite{Sorensen1992ImplicitApplicationPolynomial}]
    \label{th:sorensenMain}
    Let $\A \V = \V \Hsa +   \rsa e_\sa^T$ be a $\sa$-step Arnoldi factorization, with $\Hsa$ unreduced, i.e.\@ $h_{i+1,i} > 0, \; i=1,\dots,\sa-1$. Then $\rsa = 0$ and the columns of $\V$ span an invariant subspace of $\A$ if and only if  $\vinit = Qy$, where $\A Q = QR$ is a $\sa$ partial Schur factorization of $\A$ with $Q \in \Rnm{n}{\sa}$ and for some $y \in \Rn{\sa}$.
\end{theorem}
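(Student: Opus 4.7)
The plan is to establish the two implications separately. For the reverse direction I would exploit the $\A$-invariance of the Schur subspace $\Span{Q}$ together with the dimension count enforced by the unreduced hypothesis, and for the forward direction I would transport a real Schur decomposition of the small Hessenberg matrix $\Hsa$ back to $\A$ via the Arnoldi basis $\V$.

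Assume first that $\vinit = Qy$ with $\A Q = QR$ a $\sa$ partial Schur factorization. Since $\Span{Q}$ is $\A$-invariant, $\A^j \vinit = Q R^j y \in \Span{Q}$ for every $j \geq 0$, which gives $\K \subseteq \Span{Q}$. The unreduced hypothesis ensures $\dim \K = \sa$, and orthonormality of $Q \in \Rnm{n}{\sa}$ ensures $\dim \Span{Q} = \sa$, so $\K = \Span{Q}$ and in particular $\K$ is $\A$-invariant. Rewriting the Arnoldi relation as $\rsa e_\sa^T = \A \V - \V \Hsa$, the right-hand side lies in $\Span{\V} = \K$, so $\rsa \in \Span{\V}$. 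On the other hand $\rsa = h_{\sa+1,\sa} v_{\sa+1}$ with $v_{\sa+1}$ orthogonal to $\Span{\V}$ by construction of the Arnoldi process, forcing $\rsa = 0$.

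For the converse, assume $\rsa = 0$, so that $\A \V = \V \Hsa$ already expresses the invariance of $\Span{\V}$. I would compute a real Schur decomposition $\Hsa = U T U^T$ with $U \in \Rnm{\sa}{\sa}$ orthogonal and $T$ block upper triangular, and set $Q \coloneqq \V U$. Then $Q^T Q = U^T \V^T \V U = I_\sa$, so $Q$ is orthonormal, and $\A Q = \A \V U = \V \Hsa U = \V U T = Q T$ gives a $\sa$ partial Schur factorization $(Q, T)$ of $\A$. Finally $\vinit = \V e_1 = \V U (U^T e_1) = Q (U^T e_1)$, so $y \coloneqq U^T e_1$ completes the proof.

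The only delicate step is the dimension argument in the reverse direction: one must use the unreduced hypothesis on $\Hsa$ to force $\dim \K = \sa$ so that the inclusion $\K \subseteq \Span{Q}$ tightens to an equality, which is what makes $\K$ invariant. Everything else — orthonormality of $Q = \V U$, the fact that $\rsa \perp \Span{\V}$ by the Arnoldi construction, and reading off $y$ as the first column of $U^T$ — amounts to bookkeeping on the Arnoldi relation.
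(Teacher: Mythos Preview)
Your proof is correct. The paper does not actually supply its own proof of this statement --- it is quoted from Sorensen --- but it does prove the randomized analogue (Theorem~\ref{th:schur-invariant}), and your argument matches that proof essentially step for step: Schur-decompose $\Hsa$ and push through $\V$ for the forward direction, and use the unreduced hypothesis plus $\A$-invariance of $\Span{Q}$ for the dimension count in the reverse direction. The only cosmetic difference is that the paper argues $\mathcal{K}_{\sa+1} = \K$ and then expands $\rsa$ in the basis $\{v_1,\dots,v_\sa\}$, whereas you argue $\K = \Span{Q}$ directly and read off $\rsa \in \Span{\V} \cap \Span{\V}^\perp$; both routes are the same idea.
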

In other words, if for a given factorization $\vinit$ is a linear combination of $\sa$ Schur vectors of $\A$, then there is a happy break down in the Arnoldi process at step $\sa$.  Hence $\rsa = 0$ and exact eigenpairs can be extracted using $\Hsa$ and $\V$, according to Equation~\eqref{eq:residualNorm}.
An Arnoldi factorization can be updated by using $\nshi$ steps of the shifted QR algorithm on the upper Hessenberg matrix $\Hba$, as described in \cite{Sorensen1992ImplicitApplicationPolynomial}. The shifted QR algorithm is notably reviewed in \cite{Golub2013Matrixcomputations}. The $\nshi$ shifts can be obtained from the eigenvalues of $\Hba$. It is shown in \cite[Theorem 4.4]{Lehoucq1995DeflationTechniquesImplicitly} that given this specific shifts selection in IRA, $\vinitj$ is a linear combination of $\sa$ approximate Schur vectors of $\A$ and no additional error is added through the restart. The approximate Schur vectors are the columns of a matrix $\Tilde{Q}$ satisfying 
\begin{equation}
    \label{eq:approxSchurVecs}
    \A \Tilde{Q} = \Tilde{Q} \Tilde{R} +  \alpha \tilde{\rsa} e_\sa^T,
\end{equation}
$\Tilde{R}$ being upper triangular and $\alpha$ a scalar. In the end, IRA relies on an iterative procedure where Arnoldi factorizations are computed, updated by the shifted QR algorithm, truncated to maintain an Arnoldi structure, and then expanded again. Algorithm~\ref{alg:IRA}, also presented in \cite[Chapter 7]{saad2011numerical}, describes this process. Further details are given later in the randomized setting. 
\begin{algorithm}[htbp]
    \caption{Implicitly restarted Arnoldi algorithm (IRA)}
    \label{alg:IRA}
    \begin{algorithmic}[1]
        \STATE Perform $\sa$ steps of the Arnoldi procedure to obtain 
        $\A \V = \V \Hsa + \rsa e_{\sa}^T$ \label{alg:IRA-arnoldi-Init}
        \WHILE{convergence not declared} 
        \STATE Extend the Arnoldi factorization to a $\ba$ one through $\nshi$ additional steps: $\A \V = \V \Hba + \rba e_{\ba}^T$ \label{alg:IRA-arnoldi-extend}
        \STATE Compute the eigenvalues $(\rival_1, \dots, \rival_{\ba})$ of $\Hba$. Either declare convergence using some of these and stop, or select $\nshi$ shifts $(\shi_1,\dots,\shi_\nshi)$ among them. 
        \label{alg:IRA-get-shifts}
        \STATE // Initialize $\Hbap$ to $\Hba $ and perform $\nshi$ shifted QR steps on it:
        \FOR{$i = 1,\dots, \nshi$} \label{alg:IRA-qr-start}
        \STATE $(\Hbap - \shi_i I) = Q^i R^{i}$
        \STATE $\Hbap = (Q^{i})^T \Hbap Q^i$ 
        \ENDFOR \label{alg:IRA-qr-end}
        \STATE Set $\Q = Q^1 \dots Q^{\nshi}$ and $\Vplus = \V \Q$ \label{alg:IRA-update}
        \STATE Set $\rsaplus = \Hbap(\sa+1,\sa) \cdot \Vplus(:,k+1) + \Q(\ba,\sa) \cdot \rba$
        \STATE Truncate $\Hsaplus = \Hbap(1:\sa,1:\sa)$ and $\Vplus = \Vplus(:,1:\sa)$
        \STATE // Continue with the resulting Arnoldi factorization $\A \Vplus = \Vplus \Hsaplus + \rsaplus e_{\sa}^T$
        \ENDWHILE  
    \end{algorithmic}
\end{algorithm}

\subsection{Sketching }

We introduce several definitions and results mainly from \cite{WoodruffComputationalAdvertisingTechniques2014,MartinssonRandomizednumericallinear2020}. 
\begin{definition}[$\epsilon$-embedding]
    $\Om \in \Rnm{\dsk}{n}$ is an $\epsilon$-embedding for some $\sa$-dimensional subspace $\K \subset \Rn{n}$ if 
    \begin{equation}
       \forall \; x,y \in \K, \; \abs{\ps{\Om x}{\Om y}-\ps{x}{y}} \leq \epsilon \norm*{x}\norm*{y}, 
    \end{equation}
    or equivalently 
    \begin{equation}
        \forall \; x \in \K, \; (1-\epsilon)\norm*{x}^2 \leq \norm*{\Om x}^2 \leq (1+\epsilon)\norm*{x}^2.
        \label{eq:epsembedd}
    \end{equation}
\end{definition} 
The following definition allows to obtain such embeddings for subspaces that are not known in advance.
\begin{definition}[Oblivious subspace embedding (OSE)]
    \label{def:OSE}
    $\Om \in \Rnm{\dsk}{n}$ is an $(\epsilon,\delta,\sa)$ oblivious subspace embedding if it is an $\epsilon$-embedding for any subspace $\K$ of dimension $\sa$ with probability $1-\delta$.
\end{definition}
We present three different OSEs, other choices can be found for example in \cite{MartinssonRandomizednumericallinear2020}[Chapter 8 and 9]:
\begin{itemize}
    \item Gaussian, where $\Om = \frac{1}{\sqrt{\dsk}} G$, each entry of $G$ follows the standard normal distribution $\mathcal{N}(0,1)$. It is an OSE when $\dsk = O(\epsilon^{-2}(\sa + log \frac{1}{\delta}))$. While well suited for parallel architectures, it is expensive to apply to a large vector with a cost of $O(n \dsk)$ floating point operations (or flops).  
    \item Sumbsampled Randomized Hadamard Tranform (SRHT), where $\Om =  \sqrt{\frac{n}{\dsk}}PHD$, with $D \in \Rnm{n}{n}$ a diagonal matrix of random signs, $H \in \Rnm{n}{n}$ a Hadamard matrix assuming $n$ is a power of $2$, and $P \in \Rnm{\dsk}{n}$ an uniform sampling matrix that picks $\dsk$ columns from $n$ at random. It is an OSE when $\dsk = O(\epsilon^{-2}(\sa + log \frac{n}{\delta})\log \frac{\sa}{\delta})$. It costs $O(n \log \dsk)$ flops to apply it to a vector, thus it is more efficient than Gaussian, and its block variant is suitable for parallel architectures, see \cite{BalabanovBlocksubsampledrandomized2022a}.
    \item Sparse Sign matrix of parameter $\zeta$: $\Om = \frac{1}{\sqrt{\zeta}}  [s_1 \dots s_n]$, where each $s_i$ is a sparse column with exactly $\zeta$ random signs $\pm 1$ draw with probability $1/2$. It requires to store $O(\zeta n \log \dsk)$ numbers and costs $O(\zeta n)$ flops to apply to a vector. In practice, this is faster than the SRHT for $\zeta = 8$ in most implementation as advised in \cite{MartinssonRandomizednumericallinear2020} since the theoretical bound of $O(n \log \dsk)$ flops for SRHT is difficult to attain. It requires a high-performance sparse library for an efficient implementation. It has been theoretically shown to be an OSE for $\dsk = O(\epsilon^{-2}(\sa \log \sa))$ and $\zeta = O(\epsilon^{-1}(\log \sa))$ in \cite{Cohen2015NearlyTightOblivious}.
\end{itemize}
Sketching is of particular interest when $\sa \ll n$, i.e.\@ it is applied to a few  high dimensional vectors. A usual choice is then $\epsilon = 1/2$ that allows to preserve inner products while satisfying $\sa < \dsk \ll n$. In the numerical experiments we take $\dsk$ to be a small multiple of $\sa$, for example $4 \sa$. We further introduce the following definitions:
\begin{itemize}
    \item Two vectors $x,y \in \Rn{n}$ are $\Om$-orthogonal, noted as $x \Omperp y$, if $\ps{\Om y}{\Om x} = (\Om x)^T (\Om y) = 0$.
    \item A vector $x \in \Rn{n}$ is $\Om$-orthogonal to the subpsace $\K$, noted as $x \Omperp \K$, if $x \Omperp y, \; \forall y \in \K$, or equivalently $(\Om \V)^T (\Om x) = 0$ for any basis $\V$ of $\K$.
    \item A set of vectors $(v_1,\dots,v_k)$, or equivalently the matrix $\V$ formed by them, is $\Om$-orthonormal if $(\Om \V)^T (\Om \V) = I_k$.
    \item Given a subspace $\K$, we define $\POm$ as the $\Om$-orthogonal projector onto $\K$ the operator such that for $z \in \Rn{n}$ $(z - \POm z) \Omperp y, \; \forall \; y \in \K$. equivalently, $\POm z = \arg \min_{u \in \K} \norm*{\Om (z-u)}$.
\end{itemize}
We refer to $\Om$-orthonormal also as sketch-orthonormal.  The following result from \cite{BalabanovRandomizedGramSchmidt2022}[Corollary 2.2] reflects the fact that an $\epsilon$-embedding preserves geometry.
\begin{corollary}
    \label{cor:rando_sval}
    If $\Om \in \Rnm{\dsk}{n}$ is an $\epsilon$-embedding for $\Span{V}$ where $\V \in \Rnm{n}{\sa}$, then the singular values of $V$ are bounded by:
    \begin{equation}
        (1+\epsilon)^{-1/2} \sigma_{min}(\Om \V) \leq \sigma_{min}(\V) \leq \sigma_{max}(\V) \leq  \sigma_{max}(\Om \V) (1-\epsilon)^{-1/2}. 
    \end{equation} 
\end{corollary}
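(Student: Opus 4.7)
The plan is to reduce the statement to the defining inequality \eqref{eq:epsembedd} of an $\epsilon$-embedding, exploiting the variational (Courant--Fischer / Rayleigh-quotient) characterizations
\[
\sigma_{\min}(M) = \min_{\|x\|=1}\|Mx\|, \qquad \sigma_{\max}(M) = \max_{\|x\|=1}\|Mx\|.
\]
First I would observe that for every $x \in \Rn{\sa}$ the vector $\V x$ lies in $\Span{\V}$, so the hypothesis gives, in the squared form of \eqref{eq:epsembedd},
\[
(1-\epsilon)\,\|\V x\|^2 \;\le\; \|\Om \V x\|^2 \;\le\; (1+\epsilon)\,\|\V x\|^2
\qquad \text{for all } x \in \Rn{\sa}.
\]

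Next I would take square roots and specialize to the extremal unit vectors. For the upper bound in the corollary, the left inequality above rewrites as $\|\V x\| \le (1-\epsilon)^{-1/2}\|\Om \V x\|$; maximizing both sides over $\|x\|=1$ yields
\[
\sigma_{\max}(\V) \;\le\; (1-\epsilon)^{-1/2}\,\sigma_{\max}(\Om \V).
\]
For the lower bound, the right inequality rewrites as $\|\Om \V x\| \le (1+\epsilon)^{1/2}\|\V x\|$; minimizing over $\|x\|=1$ gives $\sigma_{\min}(\Om \V) \le (1+\epsilon)^{1/2}\sigma_{\min}(\V)$, which I would rearrange into $(1+\epsilon)^{-1/2}\sigma_{\min}(\Om \V) \le \sigma_{\min}(\V)$. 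The middle inequality $\sigma_{\min}(\V) \le \sigma_{\max}(\V)$ is automatic.

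There is no real obstacle here: the content is a direct transfer of the pointwise norm-equivalence provided by the $\epsilon$-embedding into a spectral statement via the min-max principle. The only minor care point is to make sure the embedding hypothesis is indeed applied to vectors of the form $\V x$, which is legitimate precisely because $\Om$ is assumed to be an $\epsilon$-embedding for $\Span{\V}$ (not merely for some generic subspace), so the variational arguments pass through without probabilistic or algebraic overhead.
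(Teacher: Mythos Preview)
Your argument is correct. Note that the paper does not actually supply its own proof of this corollary: it is quoted directly from \cite{BalabanovRandomizedGramSchmidt2022}[Corollary 2.2], so there is no in-paper proof to compare against. Your derivation via the Rayleigh-quotient characterization of the extreme singular values is exactly the standard route and matches what one finds in the cited reference.
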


\section{Randomized orthogonalization processes and their usage in Arnoldi}
\label{sec:orthog}

This section introduces sketched orthogonalization processes. They lie at the heart of the randomized Arnoldi method and their numerical stability is critical for the accuracy of our randomized eigensolver. We discuss the computation of a sketch orthonormal set of vectors, the condition number of the resulting basis, the integration of the randomized Gram-Schmidt algorithm in an Arnoldi procedure, and several implementation details.

\subsection{Computing a sketch-orthonormal basis}

Given $W \in \Rnm{n}{\sa}$ and an $\epsilon$-embedding $\Om \in \Rnm{n}{\dsk}$ for $\Span{W}$, we now focus on constructing an $\Om$-orthonormal matrix $Q \in \Rnm{n}{\sa}$ such that $\Span{Q} = \Span{W}$ and an upper triangular matrix $R \in \Rnm{\sa}{\sa}$ so that $W = QR$. The method should also outputs the sketched basis $S \coloneqq \Om Q$. A generic algorithm to do so is given in Algorithm~\ref{alg:sketch-orthonormal}.
\begin{algorithm}[!htbp]
    \caption{Computation of a sketch-orthonormal basis}
    \label{alg:sketch-orthonormal}
    \begin{algorithmic}[1]
        \REQUIRE $W\in \Rnm{n}{\sa}, \Om \in \Rnm{n}{\dsk}$ the sketching matrix. 
        \ENSURE $Q \in \Rnm{n}{\sa},R \in \Rnm{\sa}{\sa}$ and $S \in\Rnm{\dsk}{\sa}$ s.t. $W = QR$ where $Q$ is $\Om$-orthonormal, $R$ is upper triangular and $S = \Om Q$.  
        \FOR{$j = 1,\dots,\sa$}
        \STATE Initialize $w_j = W(:,j)$
        \STATE Sketch $z_j = \Om w_j$
        \STATE Solve with a given method $R(1:j-1,j) = \arg \min_{y \in \Rn{j-1}} \norm*{S(:,1:j-1) y - z_j}$ \label{alg:sketch-ortho-LS}
        \STATE Compute $q_j = w_j - Q(:,1:j-1)R(1:j,j-1)$ \label{alg:qjupdate}
        \STATE Sketch $s_j = \Om q_j$ OR Compute $s_j = z_j - S(:,1:j-1)R(1:j,j-1)$ \label{alg:sketch-ortho-sjupd}
        \STATE Optional: Do re-orthogonalization by doing lines~\ref{alg:sketch-ortho-LS} to~\ref{alg:sketch-ortho-sjupd} using $q_j$, $s_j$ instead of $w_j$, $z_j$ respectively.
        \STATE Normalize $R(j,j) = \norm{s_j}$
        \STATE Store $Q(:,j) = q_j / \norm{s_j} $ and $S(:,j) = s_j / \norm{s_j}$
        \ENDFOR
    \end{algorithmic}
\end{algorithm}
We consider here different cases that are mathematically equivalent.

In RGS, the randomized version of Gram-Schmidt introduced in  \cite[Section 2.4]{BalabanovRandomizedGramSchmidt2022}, the least squares problem in line \ref{alg:sketch-ortho-LS} can be solved using a standard QR factorization of $S$ through Householder reflections for instance. By using the Matlab backslash notation, this means
\begin{equation}
   R(1:j-1,j) = S(:,1:j-1) \; \backslash \; z_j. 
\end{equation}
RGS cost is halved compared to classical Gram-Schmidt, its communication cost is similar to classical Gram-Schmidt, while the numerical stability is expected to be similar to modified Gram Schmidt, as discussed in \cite{BalabanovRandomizedGramSchmidt2022}. RGS computes $s_j$ by sketching each newly computed vector $q_j$ in line~\ref{alg:sketch-ortho-sjupd}. This  has a cost that depends on the $\epsilon$-embedding considered, for instance $O(n \log \sa)$ operations in total for SRHT or $O(n \zeta)$ for Sparse Sign, and it is more stable than its variant, referred to as vector oriented RCholeskyQR in \cite{Balabanov2021RandomizedblockGram}. This latter is equivalent to compute $R$ directly from a QR factorization of the sketch of the vectors and uses its inverse to compute the basis $Q$. It corresponds here to compute $s_j = z_j - S(:,1:j-1)R(1:j,j-1)$ on line~\ref{alg:sketch-ortho-sjupd}.

The least square problem in line \ref{alg:sketch-ortho-LS} can be solved by considering that S is orthonormal, thus 
\begin{equation}
   R(1:j-1,j) = S(:,1:j-1)^T z_j. 
\end{equation}
We refer to this solution as rCGS for randomized Classical Gram-Schmidt, given the resemblance with Classical Gram-Schmidt. It is less expensive than RGS given that it uses only the transpose of $S$, but it is often unstable in our experiments. Re-orthogonalization can be done by re-doing lines \ref{alg:sketch-ortho-LS} to \ref{alg:sketch-ortho-sjupd}. We refer to this method as rCGS2 to account for the re-orthogonalization. The second orthogonalization step adds three new computations: the matrix-vector product in line~\ref{alg:sketch-ortho-LS}, the update of $q_j$ and its sketch. To decrease the costs of the update and of the sketch, it is possible to do the first orthogonalization step by using vector oriented RCholeskyQR and not computing $q_j$. An important experimental finding is that rCGS2 outputs a better conditioned $S$ at the end than RGS, notably $\norm{I - S^T S}$ is significantly closer to $0$ for rCGS2, see the experiment in Figure~\ref{fig:orthoQR}. 

In summary, rCGS is the fastest but is unstable, RGS is the best compromise between stability and cost, and rCGS2 is slightly more expensive than RGS but produces a better conditioned sketch of the basis. The latter is especially relevant compared to RGS when the updates of $q_j$ are affordable compared to the backslash solve,  which could happen when seeking many eigenvalues for a not very large input matrix $A$. RGS is especially relevant when the least squares solver through the QR factorization is efficiently implemented as for example updating progressively the QR factorization of $S$.
Numerical results are presented in Figure~\ref{fig:orthoQR} where a numerically singular matrix from \cite{BalabanovRandomizedGramSchmidt2022}[Section 5.2] is sketched orthonormalized. It consists of $W \in \Rnm{n}{\sa}$ s.t. 
\begin{equation}
  w_{i,j} = \frac{\sin (10(\mu_j + x_i))}{\cos (100(\mu_j - x-i)) +1.1}  
\end{equation}
where $x_i$ and $\mu_j$ varies from $0$ to $1$ with equally distanced points, for $n = 10^5$ and $\sa = 300$. Here rCGS2 is the implementation with two computations of $q_j$ and two sketches of it, while rCGS2w stands for rCGS2 weak and compute $q_j$ only once as mentioned above. Note that rCGS2w, which skips line \ref{alg:qjupdate}, is unstable in this situation. The condition number of $Q$ over the iterations highlights the stability of RGS and rCGS2, while the norm $\norm*{I - S^TS}$ is only maintained close to $0$ for rCGS2. This is an important property of rCGS2, yet RGS outputs $Q$ and $S$ factors conditioned well enough for many applications.  Lastly, our experiments have shown that a vector version of the Richardson method described in \cite{Balabanov2021RandomizedblockGram} is expensive and thus we do not consider it further in this paper.
\begin{figure}
    \begin{subfigure}[b]{0.48\textwidth}
        \centering
        \includegraphics[width =\textwidth]{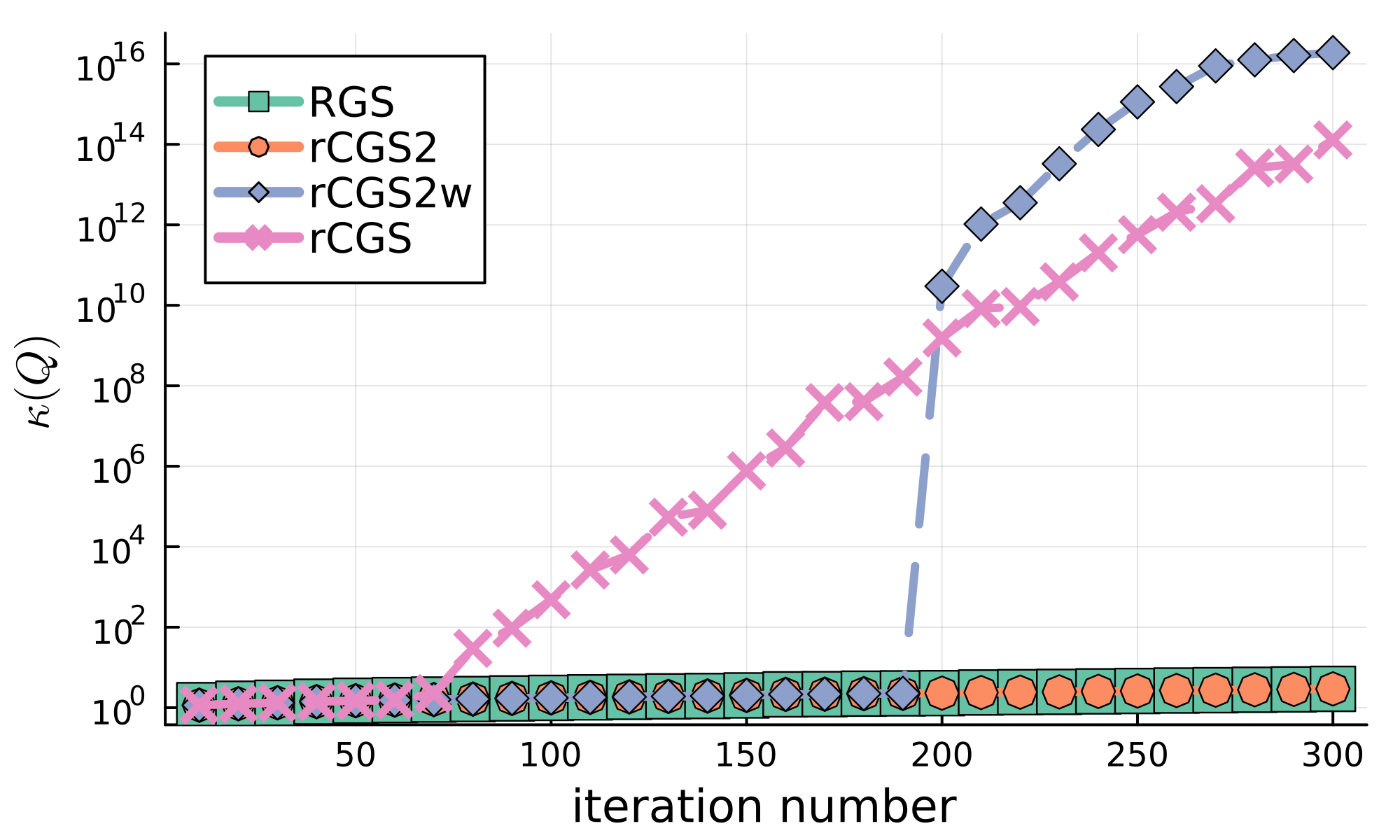}
        \caption{Condition number of $Q \in \Rnm{n}{(\# \text{ of iteration})}$}
        \label{fig:CondsQ}
    \end{subfigure}
    \hfill
    \begin{subfigure}[b]{0.48\textwidth}
        \centering
        \includegraphics[width =\textwidth]{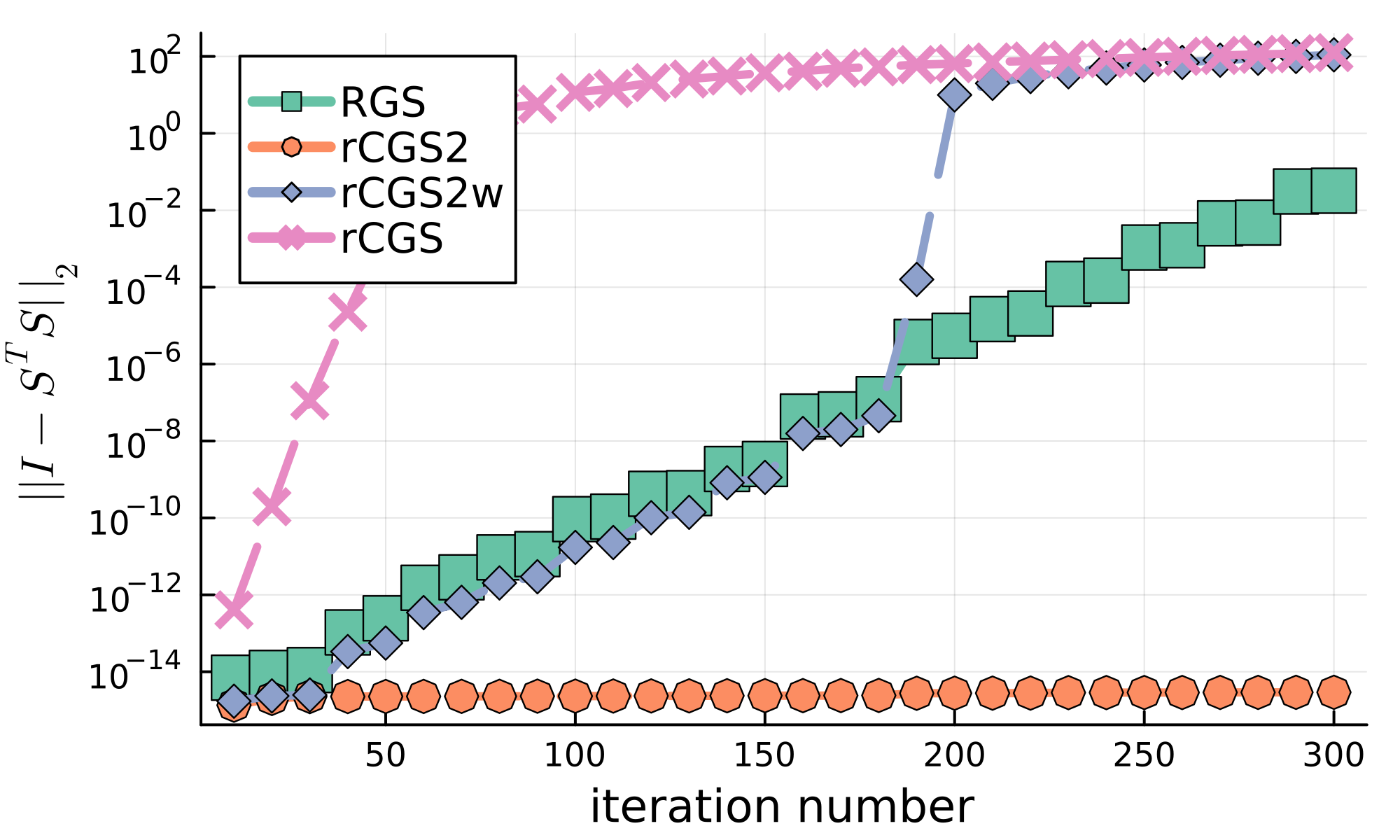}
        \caption{Operator norm $\norm{I - S^T S}$ over the iterations}
        \label{fig:I-StS}
    \end{subfigure}
    \hfill
    \caption{Sketched orthonormalization of numerically singular $W \in \Rnm{10^5}{300}$}
    \label{fig:orthoQR}
\end{figure}

\subsection{The randomized Rayleigh-Ritz procedure}

We present here a randomized Rayleigh-Ritz procedure that combines the Arnoldi procedure and sketch orthonormalization, derived notably in \cite{Balabanov2021RandomizedblockGram}[section 4] or in \cite{NakatsukasaFastAccurate2021}[section 1]. We start by defining the \rArno{} factorization.

\begin{definition}[\rArno{} factorization]
    \label{def:rArno}
    Given $\vinit \in \Rn{n}$, let the $\sa$ dimensional Krylov subspace be $\K = \Span{\vinit, \A \vinit, \dots, \A^{\sa-1}\vinit} \subset \Rn{n}$. Let $\Om$ be an $\epsilon$-embedding for $\Span{\K,\A \K}$. The computation of a sketch-orthonormal basis for $\K$ using Algorithm~\ref{alg:RCGS2-Arnoldi} for instance results in a \rArno{} factorization  
    \begin{equation}
    \label{eq:r-arnoldiRelation}
    \A \V = \V \Hsa + \rsa e_k^T, 
    \end{equation}
    with $\V \in \Rnm{n}{k}$ an $\Om$-orthonormal basis for $\K$ satisfying $\V(:,1) \coloneqq \vinit$, $\Hsa \in \Cnm{k}{k}$ an upper Hessenberg with positive sub-diagonal elements, and $\rsa = h_{\sa+1,\sa} v_{\sa +1} \in \mathcal{K}_{\sa+1}$ is $\Om$-orthogonal to $\Span{\V} = \K$.
\end{definition}
Note that in the above definition, $\K$ and $\A \K$ overlap so the subspace $\Span{\K,\A \K} = \Span{\K, v_{\sa +1}}$ is of dimension $\sa+1$. It is a conventional notation since embedding $\A \K$ will be important to derive error bounds on $\norm*{\Om (\A \rivec - \rival \rivec)}$ for $\rivec \in \K$. For the rest of the paper, when considering a \rArno{} factorization of size $\sa$, it is assumed that $\Om$ is such an $\epsilon$-embedding, unless stated otherwise when a more general OSE is required with high probability. 
In exact arithmetic, $\sigma_{min}(\Om \V) = \sigma_{max}(\Om \V) = 1$, and thus, according to Corollary~\ref{cor:rando_sval} from \cite{BalabanovRandomizedGramSchmidt2022}[Corollary 2.2], for $\epsilon = \frac{1}{2}$, $\V$ has a condition number 
\begin{equation*}
 \kappa(\V) =\frac{\sigma_{max}(\V)}{\sigma_{min}(\V)} \leq \frac{\sqrt{1+\epsilon}}{\sqrt{1-\epsilon}} \leq \sqrt{3}.   
\end{equation*}
\begin{algorithm}[!htbp]
    \caption{rCGS2-Arnoldi procedure}
    \label{alg:RCGS2-Arnoldi}
    \begin{algorithmic}[1]
        \REQUIRE $\A \in \Rnm{n}{n}$, $\sa$ the Krylov subspace dimension, $\Om \in \Rnm{\dsk}{n}$ an $\epsilon$-embedding for $\K$, $\vinit \in \Rn{n}$ with $\norm*{\Omega \vinit} =1$.
        \ENSURE $\{\vinit,\dots,v_{\sa+1}\}$ as $(\V,v_{\sa+1})$, $(\VOm,s_{\sa+1})$ the sketch of $(\V,v_{\sa+1})$, $\Hsa$ and $\rsa \coloneqq h_{\sa+1,\sa} v_{\sa+1}$   
        \STATE Initialize $V(:,1) = \vinit$, $\VOm(:,1) = \Om \vinit$
        \FOR{$j = 1,\dots,\sa$}
        \STATE Compute $w_j = A V(:,j)$
        \STATE Sketch $z_j = \Om w_j$
        \STATE Solve $H(1:j,j) = \VOm(:,1:j)^T z_j$ 
        \STATE Compute $v_j = w_j - \V(:,1:j)H(1:j,j)$
        \STATE Sketch $s_j = \Om v_j$ 
        \STATE // Re-orthogonalization 
        \STATE $\tilde{H_j} = \VOm(:,1:j)^T s_j$ 
        \STATE Update $v_j = v_j - \V(:,1:j)\tilde{H_j}$
        \STATE Update $s_j = s_j - \VOm(:,1:j)\tilde{H_j} $ 
        \STATE Update $H(1:j,j) = H(1:j,j) + \tilde{H_j}$
        \STATE Normalize $H(j+1,j) = \norm{s_j}$, stop if $\norm{s_j} =0$
        \STATE Store $\V(:,j+1) = v_j / \norm{s_j} $ and $\VOm(:,j+1) = s_j / \norm{s_j}$
        \ENDFOR
    \end{algorithmic}
\end{algorithm}
Algorithm~\ref{alg:RCGS2-Arnoldi} presents a randomized Arnoldi process using rCGS2 for the sketch-orthogonalization step. It outputs a basis $\V$ for $\K$, as it can be seen by induction, since every $v_j$ is equal to a polynomial of $\A$ of degree $j-1$ applied to $\vinit$, and the vectors are linearly independent as $\kappa(\V) \leq \sqrt{3}$. It also outputs the sketch $\VOm$ of this basis that is stored such that the sketches can be reused. The happy breakdown condition remains valid since
$$\norm*{s_j} = 0 \implies (1-\epsilon) \norm*{v_j}^2 \leq 0 \leq (1+\epsilon) \norm*{v_j}^2 \implies \norm*{v_j} = 0 \implies r_j = 0$$
by the $\epsilon$-embedding property \eqref{eq:epsembedd} and so $\V \in \Rnm{n}{j}$ is an invariant subspace of $\A$.
Note that all the variants of Algorithm~\ref{alg:sketch-orthonormal} for computing a sketch-orthogonal basis are mathematically equivalent.
Hence, we use the term \rArno{} to refer to this process, regardless of the specific algorithm used for the computation of the sketch-orthogonal basis. This process can be used to compute eigenpairs of $\A$ using the randomized Rayleigh-Ritz procedure defined below.
\begin{definition}[randomized Rayleigh-Ritz using \rArno{}]
    \label{def:sRR}
    Let $\A \V = \V \Hsa + \rsa e_k^T$ be a \rArno{} factorization. 
    From eigenpairs $(\yrivec_i, \rival_i)$ of $\Hsa \in \Rnm{k}{k}$, the Ritz pairs of $\A$ are the pairs $(\rivec_i \coloneqq \V \yrivec_i, \rival_i)$ satisfying the sketched Petrov-Galerkin condition,
    \begin{equation}
        \A \rivec_i - \rival_i \rivec_i \Omperp \K \; 
     \text{with} \; \rivec_i \in \K, \; \norm{\Omega \rivec_i} = 1, \; \rival_i \in \C.
     \label{eq:rando-rrEVP}
    \end{equation}
\end{definition}
We define an oblique projector $\POm$ on $\K$ as $\POm x = \arg \min_{y \in \K} \norm*{\Om(x-y)}$ for $x \in \Rn{n}$.
It is shown in \cite{Balabanov2021RandomizedblockGram} that the condition \eqref{eq:rando-rrEVP} of the randomized Rayleigh-Ritz process implies that the Ritz pairs are the eigenpairs of the following eigenvalue problem obtained through an oblique projection,
\begin{equation}
   \POm \A \POm \rivec_i = \rival_i \rivec_i.
\end{equation}
Moreover, it is shown in \cite{NakatsukasaFastAccurate2021} that randomized Rayleigh-Ritz can also be derived from the minimization
 \begin{equation}
     \V \Hsa = \arg \min_{Y, \Span{Y} \subset \K} \norm{\Om(\A \V - Y)} = \POm \A \V.
 \end{equation}
These two results are randomized analogs of the deterministic Rayleigh-Ritz procedure from \cite{saad2011numerical}[Chapter 4] and do not require $\K$ to be a Krylov subspace. In this article, we focus on randomized Rayleigh-Ritz using the \rArno{} method.

\section{Randomized Implicitly Restarted Arnoldi}
\label{sec:rIRA}

This section introduces the randomized implicitly restarted Arnoldi method.  It starts by motivating the use of \rArno{} within an implicitly restarted procedure, and then it introduces the resulting novel algorithm. We discuss in details its different steps and we notably show that the \rArno{} relation is maintained during the algorithm. Then we discuss the stopping criterion of the algorithm based on the sketched residual error issued from the \rArno{} process and show its relation with the standard Arnoldi residual error. 

\subsection{Main algorithm}
We show first that, as in the deterministic case, the \rArno{} procedure produces an invariant subspace if the starting vector lies in the span of $\Om$-orthonormal Schur vectors of $\A$ (defined below).  Based on this result, the underlying idea of randomized Implicitly Restarted Arnoldi method is to drive the starting vector $\vinit$ towards this span by updating the \rArno{} factorization using the deterministic shifted QR algorithm performed on the associated Hessenberg matrix. We first define the $\Om$-orthonormal Schur factorization of $\A$ and associated $\Om$-orthonormal Schur vectors.
\begin{definition}
    A $\sa$ partial $\Om$-orthonormal Schur factorization of $\A$ is the decomposition 
    \begin{equation}
      \A Q = Q R,  
    \end{equation}
    where $Q \in \Rnm{n}{\sa}$ is $\Om$-orthonormal and $R$ is block upper-triangular, with blocks of size up to $2 \times 2$. Columns of $Q$ are called $\Om$-orthonormal Schur vectors of $\A$ by analogy with the Schur vectors, and are noted $q_\sa$. It holds that
    \begin{equation}
        \A q_\sa = \eival_\sa q_\sa + \sum_{i=1}^{\sa-1} r_{i,k} q_i.
    \end{equation}
\end{definition}
It can be easily shown that the eigenvalues of $R$ are a subset of the eigenvalues of $\A$. Indeed, given the eigenpair $(y,\lambda)$ for $R$, one has: 
\begin{equation}
    R y = \lambda y \implies A \frac{(Q y)}{\norm{Qy}} = \lambda \frac{(Q y)}{\norm{Qy}},
\end{equation}
such that $(Q y / \norm{Qy}, \lambda) $ is an eigenpair for $\A$.
\begin{theorem}
    Let $\A \V = \V \Hsa +   \rsa e_\sa^T$ be a $\sa$-step \rArno{} factorization as in Definition~\ref{def:rArno}, with $\Hsa$ unreduced. Then $\rsa = 0$ and $\V$ spans an invariant subspace for $\A$ if and only if $V(:,1) = \vinit = Qy$, where $\A Q = QR$ is a $\sa$ partial $\Om$-orthonormal Schur factorization of $\A$ with $Q \in \Rnm{n}{\sa}$ and for some $y \in \Rn{\sa}$.
    \label{th:schur-invariant}
\end{theorem}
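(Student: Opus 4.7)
The plan is to mirror the structure of Sorensen's argument for the deterministic case (Theorem~\ref{th:sorensenMain}), and to verify at each step that the sketched inner product behaves exactly where standard orthonormality was previously used. Let me decompose the biconditional. Since $\rsa = \A\V - \V\Hsa$, the statement $\rsa = 0$ is equivalent to $\A\V = \V\Hsa$, which says precisely that $\Span{V}$ is invariant under $\A$; so the conjunction ``$\rsa = 0$ and $\V$ spans an invariant subspace'' collapses to the single condition $\rsa = 0$.

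\textbf{Direction }$(\Leftarrow)$\textbf{.} Assume $\vinit = Qy$ for some $\sa$ partial $\Om$-orthonormal Schur factorization $\A Q = Q R$ and some $y \in \Rn{\sa}$. Because $\A Q = QR \in \Span{Q}$, the range $\Span{Q}$ is $\A$-invariant, so by induction $\A^{j} \vinit \in \Span{Q}$ for all $j \geq 0$, giving $\K(\A,\vinit) \subset \Span{Q}$. Now under the standing assumption that $\Hsa$ is unreduced, the randomized Arnoldi construction produces $\sa$ vectors $v_1,\dots,v_\sa$ with $h_{i+1,i} > 0$ and $v_{i+1}$ a polynomial of degree exactly $i$ in $\A$ applied to $\vinit$; equivalently, $\V$ is a basis of $\K(\A,\vinit)$ and its condition number is controlled by Corollary~\ref{cor:rando_sval}, so the columns are linearly independent. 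Hence $\dim \K(\A,\vinit) = \sa = \dim \Span{Q}$, which forces $\Span{V} = \Span{Q}$. In particular $\Span{V}$ is invariant, so $\rsa = 0$.

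\textbf{Direction }$(\Rightarrow)$\textbf{.} Assume $\rsa = 0$, so $\A\V = \V\Hsa$. Apply the real Schur decomposition (Proposition on real Schur) to the small Hessenberg matrix $\Hsa \in \Rnm{\sa}{\sa}$: there exist $Z \in \Rnm{\sa}{\sa}$ orthonormal and $R \in \Rnm{\sa}{\sa}$ block upper triangular (with blocks of size $1 \times 1$ or $2 \times 2$) such that $\Hsa Z = Z R$. Define $Q \coloneqq \V Z$. Then
\begin{equation*}
    \A Q = \A \V Z = \V \Hsa Z = \V Z R = Q R,
\end{equation*}
so $\A Q = QR$ is block upper triangular. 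The key verification is that $Q$ is $\Om$-orthonormal, which follows immediately from the $\Om$-orthonormality of $\V$ and the standard orthonormality of $Z$:
\begin{equation*}
    (\Om Q)^T (\Om Q) = Z^T (\Om \V)^T (\Om \V) Z = Z^T I_\sa Z = I_\sa.
\end{equation*}
Finally, set $y = Z^T e_1 \in \Rn{\sa}$; then $Qy = \V Z Z^T e_1 = \V e_1 = \vinit$, giving the desired representation.

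\textbf{Main obstacle.} The only non-cosmetic deviation from the deterministic proof lies in checking that $Q = \V Z$ inherits $\Om$-orthonormality from $\V$; this is handled above by the short computation showing that the standard orthogonal transformation $Z$ passes harmlessly through the sketched inner product. The dimension-counting step of the $(\Leftarrow)$ direction also deserves care, since it is where the hypothesis ``$\Hsa$ unreduced'' is consumed via the bound $\kappa(\V) \leq \sqrt{3}$ supplied by the $\epsilon$-embedding.
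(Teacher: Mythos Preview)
Your argument tracks the paper's proof closely, and the $(\Rightarrow)$ direction is essentially identical (Schur factor $\Hsa$, set $Q=\V Z$, check $\Om$-orthonormality, take $y=Z^T e_1$).

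In the $(\Leftarrow)$ direction there is a small but genuine gap. Your preamble asserts that ``$\A\V=\V\Hsa$ \dots{} says precisely that $\Span{\V}$ is invariant under $\A$'' and then invokes this equivalence at the end. The forward implication is immediate, but the reverse is not: invariance of $\Span{\V}$ only gives $\A\V=\V M$ for \emph{some} $M$, which combined with the Arnoldi relation yields $\rsa e_\sa^T = \V(M-\Hsa)$, i.e.\ $\rsa\in\Span{\V}$. To conclude $\rsa=0$ you must still use $\rsa\Omperp\V$ together with the $\Om$-orthonormality of $\V$ (write $\rsa=\V\alpha$, then $\alpha=(\Om\V)^T\Om\rsa=0$). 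The paper makes exactly this step explicit. Consequently your closing remark that ``the only non-cosmetic deviation'' lies in checking $\Om$-orthonormality of $Q=\V Z$ is inaccurate: the sketched orthogonality of $\rsa$ to $\V$ is equally essential in the $(\Leftarrow)$ direction, and is precisely where randomization enters there.
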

\begin{proof}
    First, let us assume that $\rsa = 0$. We denote by $\Hsa Q_k = Q_k R_k$ the Schur factorization of $\Hsa$, with $Q_k \in \Rnm{k}{k}$ square orthonormal. Then
    $ \A \V Q_k = \V \Hsa Q_k = \V Q_k R_k$.
    Define $Q \coloneqq \V Q_k$. Then $Q$ is $\Om$-orthonormal, since we have:
    $$ (\Om Q)^T (\Om Q) = (\Om \V Q_k)^T (\Om \V Q_k)  = Q_k^T (\Om \V)^T (\Om \V) Q_k = Q_k^T Q_k = I_k. $$
    We thus obtain $\A Q = Q R_k$, a partial $\Om$-orthonormal Schur factorization of $\A$, and $\vinit = \V e_1 = \V Q_k Q_k^T e_1 = \V Q_k y = Qy$ with $y = Q_k^T e_1$.  

    Now assume $\vinit = Qy$ with $\A Q = Q R$. We show that $\dim(\mathcal{K}_{\sa+1}) = \sa$. We have $\A^m Q = Q R^m$ for every integer $m \geq 0$, and so $\A^m \vinit = \A^m Q y = Q R^m y \in \Span{Q}, \; \forall m \geq 0$. Notably, this means that $\mathcal{K}_{\sa+1}(\A, \vinit) \coloneqq \Span{\vinit,\dots,\A^{\sa}\vinit} \subset \Span{Q}$, thus $\dim(\mathcal{K}_{\sa+1}) \leq \dim(\Span{Q}) = \sa$, the last equality coming from $Q \in \Rnm{n}{k}$ being $\Om$-orthonormal and thus full rank thanks to Corollary~\ref{cor:rando_sval}. 
    In addition, Krylov subspaces satisfy the nesting property $\K \subset \mathcal{K}_{\sa+1}$. This implies that $\dim(\mathcal{K}_{\sa+1}) \geq \dim(\K)$. We have that $\dim(\K) = \sa$ since $\Hsa$ is unreduced, meaning $\K$ is not degenerate. Thus $\dim(\mathcal{K}_{\sa+1}) = k$, and so we have $\K = \mathcal{K}_{\sa+1}$ by inclusion and dimension equality. To finally show that $\rsa = 0$, write the decomposition $\rsa = \sum_{j=1}^{\sa} \alpha_j v_j$ since the vectors $v_j$ for $j=1,\dots,\sa$ are a set of $\sa$ linearly independent vectors of $\mathcal{K}_{\sa+1}$ and thus they form a basis for $\mathcal{K}_{\sa+1}$. The $\alpha_i$ can be obtained as $\ps{\Om \rsa}{\Om v_i} = \alpha_i$ using the $\Om$-orthonormality of the basis $\V$, and since $\rsa \Omperp \K$, we have $\ps{\Om \rsa}{\Om v_i} = 0$ for all $i = 1,\dots,\sa$. This gives $\rsa = 0$.
\end{proof}
We will see later in the analysis section~\ref{sec:analysis} the relation between  $\rsa$ and $\vinit$ in \rArno{}, see Theorem~\ref{th:rsa-vinit}, and how $\vinit$ is updated to be in the span of approximate $\Om$-orthonormal Schur vectors of $\A$, see Equation~\eqref{eq:approxOmSchur}. 
The randomized Implicitly Restarted Arnoldi process is presented in Algorithm~\ref{alg:rIRA}. The main difference with Algorithm~\ref{alg:IRA} lies in the use of \rArno{} to obtain the Arnoldi factorization. We describe now in more details its main steps. 

\begin{algorithm}[!htbp]
    \caption{randomized Implicitly Restarted Arnoldi (rIRA)}
    \label{alg:rIRA}
    \begin{algorithmic}[1]
        \REQUIRE $\A \in \Rnm{n}{n}$, $\ba$ the Krylov dimension, $\Om \in \Rnm{\dsk}{n}$ an  $\epsilon$-embedding for $ \Span{\mathcal{K}_{\ba}, \A \mathcal{K}_{\ba}} $, $\vinit \in \Rn{n}$ with $\norm*{\Omega \vinit} =1$ and a convergence criteria. 
        \ENSURE $(\vinit,\dots,v_{\ba+1})$ as $(\V,v_{\ba+1})$, $\Hba$ and $\rba \coloneqq h_{\ba+1,\ba} v_{\ba+1}$
        \STATE Perform $\sa$ steps of the \rArno{} procedure to obtain 
        $\A \V = \V \Hsa + \rsa e_{\sa}^T$ and $\VOm = (\Om \V) \in \Rnm{\dsk}{\sa}$ \label{alg:rIRA-arnoldi-Init}
        \WHILE{convergence not declared} 
        \STATE Extend the \rArno{} factorization through $\nshi$ additional steps: $\A \V = \V \Hba + \rba e_{\ba}^T$ and $\VOm = (\Om \V) \in \Rnm{\dsk}{(\ba)}$ \label{alg:rIRA-arnoldi-extend}
        \STATE Compute the eigenvalues $(\rival_1, \dots, \rival_{\ba})$ of $\Hba$. Either declare convergence using some of these and stop, or select $\nshi$ shifts $(\shi_1,\dots,\shi_\nshi)$ among them.  
        \label{alg:rIRA-get-shifts}
        \STATE // Initialize $\Hbap$ to $\Hba$ and perform $\nshi$ shifted QR steps on it:
        \FOR{$i = 1,\dots, \nshi$} \label{alg:rIRA-qr-start}
        \STATE $(\Hbap - \shi_i I) = Q^i R^{i}$ \label{alg:rIRA-qrfacto}
        \STATE $\Hbap = (Q^{i})^T \Hbap Q^i$ \hspace{0.8cm}// Note that $(Q^{i})^T \Hbap Q^i= R^{i} Q^i + \shi_i I$ which is less expensive
        \ENDFOR \label{alg:rIRA-qr-end}
        \STATE Set $\Q = Q^1 \dots Q^{\nshi}$ and $\Vplus = \V \Q$ \label{alg:rIRA-update}
        \STATE Set $\rsaplus = \Hbap(\sa+1,\sa) \cdot \Vplus(:,k+1) + \Q(\ba,\sa) \cdot \rba$ \label{alg:residualupdate}
        \STATE Truncate $\Hsaplus = \Hbap(1:\sa,1:\sa)$ and $\Vplus = \Vplus(:,1:\sa)$ \label{alg:rIRA-truncate}
        \STATE // Continue with the resulting \rArno{} factorization $\A \Vplus = \Vplus \Hsaplus + \rsaplus e_{\sa}^T$ and  $\Tilde{\VOm} = \Om \Vplus \in \Rnm{\dsk}{\sa}.$
        \ENDWHILE 
    \end{algorithmic}
\end{algorithm}

Lines~\ref{alg:rIRA-arnoldi-Init} and~\ref{alg:rIRA-arnoldi-extend} correspond to the randomized Arnoldi factorizations. Obtaining such factorizations is discussed in section~\ref{sec:orthog}. The extension of a given Arnoldi factorization from $\sa$ to $\ba$ is well explained in \cite{Lehoucq1995DeflationTechniquesImplicitly} or \cite{Sorensen1992ImplicitApplicationPolynomial}. It requires to augment $\V$ and $\Hsa$ with $v_{\sa+1}$ and $h_{\sa+1,\sa}$ respectively and continue an Arnoldi procedure, and is easily adaptable to the randomized case. 

Line~\ref{alg:rIRA-get-shifts} corresponds to the shifts selection, which lies at the heart of the shifted QR algorithm. This step defines the polynomial that is implicitly applied to $\A$ when restarting, as described in \cite{Sorensen1992ImplicitApplicationPolynomial} or in \cite{saad2011numerical}[Chapter 7]. It is indeed shown that IRA updates $\vinit$ with 
$$\vinitj = \polp(\A)\vinit, \quad \text{where} \quad \polp(\lambda) \coloneqq \prod_{i= 1}^{\nshi}(\lambda - \shi_i),$$
such that $\polp$ is a polynomial of degree $\nshi$ that has the shifts as roots. If one considers the shifts as approximations of eigenvalues of $\A$, it has the consequence to severely reduce components of $\vinit$ along eigenvectors associated with $\eival_i \approx \shi_i, i = 1 ,\dots,\nshi$.
The strategy referred to as exact shifts selection consists in dividing the spectrum of $\Hba$ into two subsets, wanted Ritz eigenvalues and unwanted Ritz eigenvalues. The shifts correspond to the unwanted ones. This allows the user to specify which part of the spectrum of $\A$ is of interest: smallest (or largest) modulus eigenvalues, or their real or imaginary part for instance. This important property of IRA is preserved in rIRA, as discussed later in the analysis, see Equation~\eqref{eq:vinitj-polpA}.

Lines~\ref{alg:rIRA-qr-start} to~\ref{alg:rIRA-qr-end} correspond to the $\nshi$ shifted QR steps. Note that there exist strategies that allow to maintain real arithmetic even when dealing with complex shifts, as explained in \cite[Francis QR Step]{Golub2013Matrixcomputations}.  We emphasize that the $\nshi$ QR factorizations should be computed by using a deterministic QR. Since $\ba$ is small compared to $n$, deterministic QR is affordable and it can be computed using Householder QR for instance, see e.g. \cite{Golub2013Matrixcomputations}.  Randomized orthogonalization processes are not suitable in this case since they require the sketch dimension $\dsk$ to be larger than the number of vectors when the $\epsilon$-embedding property should be satisfied for a priori unknown subspaces. 

Lines~\ref{alg:rIRA-update} to \ref{alg:rIRA-truncate} correspond to the update of the Arnoldi factorization. The computation of the update is obtained from the following derivation. We start from the right multiplication of the length $\ba$ \rArno{} relation by $\Q$,
\begin{align}
    \label{eq:extended-arnoldi}
    (\A \V) \Q & = (\V \Hba) \Q + (\rba e_{\ba}^T ) \Q, \nonumber \\
    \A \V \Q & = \V \Q \Hbap + \rba e_{\ba}^T \Q, 
\end{align}
since $ \Q^T \Hba \Q = \Hbap$. We recall that a QR decomposition of an upper Hessenberg matrix outputs a $Q$ factor also upper ? Hessenberg. This is because for $H$ upper Hessenberg, $H = QR \implies Q = H R^{-1}, $
and the product of an upper triangular matrix $R^{-1}$ with an upper Hessenberg $H$ outputs an upper Hessenberg factor $Q$. Thus each $Q^i \in \Rnm{(\ba)}{(\ba)}$ is upper Hessenberg, and so $\nshi$ products of them produce a matrix whose last row is zero for the first $(\ba) - \nshi - 1 = \sa -1$ components, that is $e_{\ba}^T \Q = (0 \dots 0 \; \eta_{\sa} \dots \eta_{\ba})$. At the same time, we also have the important property that $\Hbap$ is still upper Hessenberg, given that the product $R^i Q^i$ is, with $R^i$ upper triangular. We can thus partition $\Hbap =
\begin{pmatrix}
    \Hsaplus & M \\
    \beta e_1 e_{\sa}^T & H_p 
\end{pmatrix},$ with $\beta \coloneqq \Hbap(\sa+1,\sa)$ and $\Hsaplus$ upper Hessenberg. 
Finally, by equating the first $\sa$ columns of Equation~\eqref{eq:extended-arnoldi} we obtain:
\begin{equation}
    \A \V \Q(:,1:\sa) = \V \Q(:,1:\sa) \Hsaplus +  \V \Q(:,\sa+1) \beta e_{\sa}^T + \rba \eta_{\sa} e_{\sa}^T.
    \label{eq:updated-arnoldi}
\end{equation} 
To show that this is a legitimate \rArno{} relation, we verify the $\Om$-orthonormality of the new Krylov basis $\V \Q(:,1:\sa)$ and $\Om$-orthogonality with respect to it of the new residual: 
$$(\Om \V \Q(:,1:\sa))^T (\Om \V \Q(:,1:\sa)) = \Q(:,1:\sa)^T (\Om \V)^T (\Om \V) \Q(:,1:\sa) = \Q(:,1:\sa)^T \Q(:,1:\sa) = I_{\sa},$$ 
and
\begin{align*}
    (\Om \V \Q(:,1:\sa))^T (\Om (\beta \V \Q(:,\sa+1) + \rba \eta_{\sa})) & = \beta \Q(:,1:\sa)^T (\Om \V)^T (\Om \V) \Q(:,\sa+1) \\
    & +  \eta_{\sa} \Q(:,1:\sa)^T (\Om \V)^T (\Om \rba) \\
    & = \beta \Q(:,1:\sa)^T \Q(:,\sa+1) \\
    & = 0,
\end{align*}
using $(\Om \V)^T (\Om \rba) = 0$ by $\Om$-orthogonality of $\rba$ to $\V$, and $\Q(:,1:\sa)^T \Q(:,\sa+1) = 0$ by orthogonality of $\Q$. Equation~\eqref{eq:updated-arnoldi} can then be written as
\begin{equation}
    \A \Vplus = \Vplus \Hsaplus + \rsaplus e_{\sa}^T,
\end{equation}
with $\Vplus \coloneqq \V \Q(:,1:\sa) \in \Rnm{n}{\sa}$ $\Om$-orthonormal and $\rsaplus \coloneqq \Hbap(\sa+1,\sa) \cdot \V \Q(:,\sa+1) + \Q(\ba,\sa) \cdot \rba $. This \rArno{} factorization can thus be used for the next iteration. 

\subsection{Monitoring the residual error}
The stopping criterion of the algorithm can be defined
as reaching an accuracy $\eta$ for all the residual errors among the wanted pairs, that is when all Ritz pairs satisfy 
\begin{equation}
\label{eq:stopping-criterion}
\frac{\norm*{\A \rivec - \rival \rivec}}{\norm*{\rivec}} \leq \eta,
\end{equation}
where the division by $\norm{\rivec}$ allows normalize the vector. For a Ritz pair $(\rival, \rivec = \V \yrivec)$, we can multiply Equation~\eqref{eq:r-arnoldiRelation} with $\yrivec$ and obtain
\begin{equation}
   \A \rivec - \rival \rivec = h_{\sa+1,\sa} v_{\sa +1} e_{\sa}^T \yrivec 
\end{equation}
for a length $\sa$ \rArno{} factorization. We have $\norm*{\Om \rivec} = \norm{\Omega \V \yrivec} =  1$ since $(\Omega \V)$ is orthogonal and $\yrivec$ is an eigenvector of norm 1. Moreover $\norm*{\Om v_{\sa +1}} = 1$ by construction.  For now we divide by $\norm{\rivec}$ to normalize $\rivec$, which is required for an approximate eigenvector:
\begin{equation}
    \frac{\norm*{\A \rivec - \rival \rivec}}{\norm*{\rivec}} = h_{\sa+1,\sa} \frac{\norm*{v_{\sa +1}}}{\norm*{\rivec}} \abs{e_{\sa}^T \yrivec}.
    \label{eq:r-residualNorm}
\end{equation}
It is the analogous of Equation~\eqref{eq:residualNorm} for the residual error. The right hand side of Equation~\eqref{eq:r-residualNorm} is computable and cheaper to get than its left hand side counterpart, but it adds the computation of two norms in $\Rn{n}$ compared to the deterministic case, namely $\norm*{v_{\sa +1}}$ and $\norm*{\rivec}$. To avoid these computations, bounds can be derived in the randomized setting as shown in \cite{NakatsukasaFastAccurate2021}. With $\Om$ an $\epsilon$-embedding for $\Span{\K,\A \K}$, we use the $\epsilon$-embedding property from Equation~\eqref{eq:epsembedd} to obtain 
\begin{alignat*}{2}
    \frac{1}{1 + \epsilon}\norm*{\Om(\A \rivec - \rival \rivec)}^2 & \leq   \norm*{\A \rivec - \rival \rivec}^2  && \leq  \frac{1}{1 - \epsilon}\norm*{\Om(\A \rivec - \rival \rivec)}^2,  \\
    1-\epsilon = \frac{1-\epsilon}{\norm{\Om \rivec}^2}    & \leq \hfill \frac{1}{\norm*{\rivec}^2} \hfill && \leq  \frac{1+\epsilon}{\norm{\Om \rivec}^2} = 1+\epsilon, 
\end{alignat*}
which leads to
\begin{equation}
    \sqrt{\frac{1-\epsilon}{1 + \epsilon}} \norm*{\Om(\A \rivec - \rival \rivec)} \leq  \frac{\norm*{\A \rivec - \rival \rivec}}{\norm*{\rivec}}   \leq  \sqrt{\frac{1+\epsilon}{1 - \epsilon}} \norm*{\Om(\A \rivec - \rival \rivec)}.
    \label{eq:r-residualNorm-free}
\end{equation}
It is important to note that $\Om(\A \rivec - \rival \rivec) = \Om h_{\sa+1,\sa} v_{\sa +1} e_{\sa}^T \yrivec$ such that 
\begin{equation}
    \label{eq:cheapOmResid}
    \norm*{\Om(\A \rivec - \rival \rivec)} = h_{\sa+1,\sa} \abs{e_{\sa}^T \yrivec},
\end{equation}
i.e.\@ this is a quantity available at no significant cost. Consequently, we use in our work the stopping criterion
\begin{equation}
    h_{\sa+1,\sa} \abs{e_{\sa}^T \yrivec} \leq \Tilde{\eta}
\end{equation}
for a given target accuracy $\Tilde{\eta}$, knowing that the residual error of interest $\norm*{\A \rivec - \rival \rivec} / \norm{\rivec}$ is close to it up to factors $\sqrt{(1 \pm \epsilon)/(1 \mp \epsilon)}$. That is  
\begin{equation}
     \frac{\norm*{\A \rivec - \rival \rivec}}{\norm*{\rivec}}   \leq \sqrt{\frac{1+\epsilon}{1 - \epsilon}} \Tilde{\eta} = \eta
\end{equation}
and these small factors do not change the order of magnitude, so if $\Tilde{\eta}$ is small then $\eta$ is small and the Ritz pair can be declared as converged. 

We note that another stopping criterion can be
\begin{equation}
    \frac{\norm*{\A \rivec - \rival \rivec} / \norm*{\rivec}}{\norm{\A \rivec} / {\norm{\rivec}}} \leq \hat{\eta}.
\end{equation}
which allows to work with relative error and can be relevant when considering finite precision arithmetic. Using again the $\epsilon$-embedding property of $\Om$, we obtain
\begin{equation}
    \sqrt{\frac{1}{1 + \epsilon}} \frac{\norm*{\Om(\A \rivec - \rival \rivec)}}{\norm{\A \rivec}} \leq  \frac{\norm*{\A \rivec - \rival \rivec}}{\norm*{\A \rivec}}   \leq  \sqrt{\frac{1}{1 - \epsilon}} \frac{\norm*{\Om(\A \rivec - \rival \rivec)}}{\norm{\A \rivec}}.
    \label{eq:r-relative-residualNorm-free}
\end{equation}
We do not use this stopping criterion in our experiments, however we show that this is also easily computable using sketching.
The quantity $\norm*{\Om(\A \rivec - \rival \rivec)} = h_{\sa+1,\sa} \abs{e_{\sa}^T \yrivec}$ is readily available. In addition, if $\A \rivec$ is approximated by $ \rival \rivec$, which is a good approximation close to the convergence of the Ritz pair, then $h_{\sa+1,\sa} \abs{e_{\sa}^T \yrivec}/ |\rival| $ is inexpensive to compute.  

\subsection{Implicit sketching and sketched vectors storage}
We finally discuss the computation of the sketched quantities in the rIRA framework when updating the \rArno{} factorization and truncating it as in Lines~\ref{alg:rIRA-update} to~\ref{alg:rIRA-truncate} of Algorithm~\ref{alg:rIRA}. We already discussed for Algorithm~\ref{alg:RCGS2-Arnoldi} that the sketch of $\V$, which is noted $\VOm$, is stored during the algorithm to avoid numerous sketching operations. It is thus an output of every \rArno{} factorization. When extending the factorization from size $\sa$ to $\ba$ in Algorithm~\ref{alg:rIRA}, a $\VOm \in \Rnm{\dsk}{\sa}$ is also required as an input to carry the sketched orthogonalization steps against all $\sa$ first previous vectors. It is possible to recover $\Tilde{\VOm}$ after the update step $\Vplus \coloneqq \V \Q$. The straightforward strategy is to sketch $\Vplus$, but this requires $\ba$ computations on vectors of size $n$. However using
\begin{equation}
  \Tilde{\VOm} = \VOm \Q  \in \Rnm{\dsk}{(\ba)}
\end{equation}
before truncation, given that $\Tilde{\VOm} \coloneqq \Om \Vplus = \Om \V \Q = \VOm \Q$, is cheaper while being numerically stable given that $\Q$ is well conditioned. 
The same process can be applied for $(\Om \rsaplus)$, which is needed as the next vector for $\Tilde{\VOm}$ in the expansion step. We apply $\Om$ to the update in Line~\ref{alg:residualupdate} to obtain
\begin{equation}
   (\Om \rsaplus) =  \Hbap(\sa+1,\sa) \cdot \Tilde{\VOm}(:,k+1) + \Q(\ba,\sa) \cdot (\Om \rba). 
\end{equation}
Note that $\Tilde{\VOm}(:,k+1)$ is readily available thanks to the discussion above and $(\Om \rba)$ is given as an output of the previous \rArno{} extension run.  Finally, $\Tilde{\VOm}$ can be truncated,
\begin{equation}
  \Tilde{\VOm} = \Tilde{\VOm}(:,1:\sa) \in \Rnm{\dsk}{\sa}.  
\end{equation}
We call this implicit sketching since it is mathematically equivalent to sketching $\VOm$, but the computations are done in $\Rnm{\dsk}{(\ba)}$.

\section{Analysis of randomized Arnoldi and randomized Implicitly Restarted Arnoldi}
\label{sec:analysis}
In this section, we start by considering an oblique projection method and show an optimality result satisfied by the characteristic polynomial of the Hessenberg factor. Randomized Arnoldi is a specific case of this method. Using this oblique projection, we establish bounds on the distance between the eigenvectors of $\A$ and the Krylov subspace $\K$. We then consider specifically the rIRA method that uses \rArno{} and the deterministic shifted QR algorithm, and notably prove that rIRA does not add error to the approximation of eigenvectors when restarting.  We also specify the subspace to which $\vinit$ belongs after the update step. Finally, we give a convergence result for a specific shift selection setting.

\subsection{Analysis of \rArno{}} 
Given any $\Om \in \Rnm{\dsk}{n}$ with full row rank, we start by considering an oblique projector $\POm$ on any subspace $\K$ of dimension $\sa$, defined by 
\begin{equation}
    \POm x = \arg \min_{y \in \K} \norm*{\Om(x-y)}
\end{equation} for $x \in \Rn{n}$. It can be represented in matrix form as:
\begin{equation}
    \POm = \V (\Om \V)^\dag \Om,
\end{equation}
where $\V$ is a basis for $\K$ and $\dag$ denotes the pseudo-inverse. We derive the following property of this oblique projector.
\begin{lemma}
    \label{eq:self-adj}
    Assuming that $(\Om \V)$ is of full column rank, the oblique projector $\POm$ on $\K$ defined by $\POm x = \arg \min_{y \in \K} \norm*{\Om(x-y)}$ satisfies that 
    \begin{equation}
     \psOm{\POm x}{y} = \psOm{x}{\POm y},   
    \end{equation}
    for all  $x,y \in \Rn{n}$.
\end{lemma}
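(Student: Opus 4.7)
The plan is to use the matrix representation of the oblique projector given immediately above the lemma, namely $\POm = \V (\Om \V)^\dag \Om$, and reduce both sides of the claimed identity to a common symmetric bilinear form on $\Rn{\dsk}$.

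First I would unfold the pseudo-inverse. Because $(\Om \V)$ has full column rank by hypothesis, set $M \coloneqq \Om \V \in \Rnm{\dsk}{\sa}$ so that $M^T M$ is invertible and
\begin{equation*}
    (\Om \V)^\dag = (M^T M)^{-1} M^T.
\end{equation*}
Substituting into the formula for $\POm$ gives $\POm = \V (M^T M)^{-1} M^T \Om$, and left-multiplication by $\Om$ yields
\begin{equation*}
    \Om \POm = M (M^T M)^{-1} M^T \Om \eqqcolon P \Om,
\end{equation*}
where $P \coloneqq M(M^T M)^{-1} M^T$ is the standard Euclidean orthogonal projector onto $\Sp(M)$ on $\Rn{\dsk}$, and is in particular symmetric, $P^T = P$.

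Next I would compute both inner products explicitly using the definition $\psOm{a}{b} = (\Om a)^T (\Om b)$. For arbitrary $x,y \in \Rn{n}$,
\begin{equation*}
    \psOm{\POm x}{y} = (\Om \POm x)^T (\Om y) = (\Om x)^T P^T (\Om y) = (\Om x)^T P (\Om y),
\end{equation*}
and symmetrically
\begin{equation*}
    \psOm{x}{\POm y} = (\Om x)^T (\Om \POm y) = (\Om x)^T P (\Om y).
\end{equation*}
The two right-hand sides coincide, which proves the identity.

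There is essentially no hard step here: the only substantive point is recalling the closed form of the pseudo-inverse under the full column rank assumption and noticing that the resulting matrix $M(M^T M)^{-1} M^T$ is manifestly symmetric. The full-rank hypothesis is precisely what guarantees the inversion of $M^T M$, and dropping it would force one to work with the general Moore–Penrose pseudo-inverse, where one would instead appeal to the identity $(M^\dag)^T M^T = M (M^\dag)$, which is equally routine but slightly less transparent.
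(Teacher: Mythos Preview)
Your proof is correct and follows essentially the same route as the paper: both use the matrix form $\POm = \V(\Om\V)^\dag\Om$, expand the pseudo-inverse via the full column rank hypothesis, and verify that $\Om\POm$ acts through the symmetric matrix $M(M^TM)^{-1}M^T$. Your presentation is slightly cleaner in that you explicitly name this matrix as the Euclidean orthogonal projector $P$ onto $\Sp(M)$ and invoke its symmetry directly, whereas the paper carries out the transpose computation by hand; the underlying argument is identical.
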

\begin{proof}
    We have that $\ps{\Om \POm x}{\Om y} = \ps{\Om \V (\Om \V)^\dag \Om x }{\Om y} = \ps{\Om x }{(\Om \V (\Om \V)^\dag)^T \Om y} = \ps{\Om x }{((\Om \V)^\dag)^T (\Om \V)^T \Om y}$. 
    Now using that $(\Om \V)$ is full column rank, the pseudo-inverse can be written as $(\Om \V)^\dag = [(\Om \V)^T (\Om \V) ]^{-1} (\Om \V)^T $. Thus
    \begin{align*}
        ((\Om \V)^\dag)^T  = (\Om \V) ([(\Om \V)^T (\Om \V) ]^{-1})^T 
         = (\Om \V) ([(\Om \V)^T (\Om \V) ]^{T})^{-1} 
         = (\Om \V) [(\Om \V)^T (\Om \V) ]^{-1}
    \end{align*}
    using commutation of the inverse and the transpose. This gives
    \begin{align*}
        ((\Om \V)^\dag)^T (\Om \V)^T \Om  = (\Om \V) [(\Om \V)^T (\Om \V) ]^{-1} (\Om \V)^T \Om 
         = \Om \V (\Om \V)^\dag \Om = \Om \POm 
    \end{align*}
    and thus $\ps{\Om \POm x}{\Om y} = \ps{\Om x }{\Om \POm y}$.
\end{proof}

We now consider the use of \rArno{}, as defined in Definition~\ref{def:rArno}, within a randomized Rayleigh-Ritz procedure, as defined in Definition~\ref{def:sRR}. This means that $\K$ is a Krylov subspace of basis $\V$ and $\Om$ is an $\epsilon$-embedding for this subspace. In this setting, note that $(\Om \V)^\dag = (\Om \V)^T$ since $\V$ is $\Om$-orthonormal. We establish the following optimality result for the \rArno{} procedure, which generalizes \cite{saad2011numerical}[Theorem 6.1] to oblique projections in the randomized context. This result will be useful to derive a convergence result for rIRA later in Theorem~\ref{th:rIRAconv}. 
\begin{theorem}
    \label{th:charac-poly-min}
    Suppose $\A \V = \V \Hsa + \rsa e_\sa^T$ is a \rArno{} factorization obtained from starting vector $\vinit$ as in Definition \ref{def:rArno}. Then $\Hsa$ is the representation in $\Rn{\sa}$ with respect to the basis $\V$ of the randomized projection $\POm \A \POm$ restricted to $\K$, that is
    \begin{equation}
        \POm \A \POm x = \V \Hsa y \quad \text{when $x = Vy$}.
    \end{equation}
    Moreover, its characteristic polynomial $\hat{p}_{\sa}$ minimizes $\norm*{\Om p(\A) \vinit}$ over the set $\mathcal{PM}_\sa$ of all monic polynomials $p$ of degree $\sa$, i.e.\@ 
    \begin{equation}
        \hat{p}_{\sa} = \arg \min_{p \in \mathcal{PM}_\sa } \norm*{\Om p(\A) \vinit}
    \end{equation}
\end{theorem}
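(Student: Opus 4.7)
The proof naturally splits into two parts, and I would tackle them in order.

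\textbf{Part 1: $\Hsa$ represents $\POm \A \POm|_\K$.} For any $x = \V y \in \K$ we have $\POm x = x$ since $x \in \K$. Applying $\A$ and using the \rArno{} relation gives $\A \POm x = \A \V y = \V \Hsa y + \rsa e_\sa^T y$. To conclude I would argue that $\POm \rsa = 0$: by definition $\POm \rsa = \arg\min_{z \in \K} \norm{\Om(\rsa - z)}$, and since $\rsa \Omperp \K$ (so $(\Om \V)^T \Om \rsa = 0$), the minimizer is $z = 0$. Therefore $\POm \A \POm x = \POm (\V \Hsa y) + (e_\sa^T y) \POm \rsa = \V \Hsa y$, as required.

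\textbf{Part 2: Characterization of the characteristic polynomial.} The key identity to establish is that for every monic polynomial $p$ of degree $\sa$,
\begin{equation*}
p(\A) \vinit = \V p(\Hsa) e_1 + \tau\, \rsa, \qquad \tau := \prod_{i=1}^{\sa-1} h_{i+1,i} > 0.
\end{equation*}
I would prove this in two steps. First, an induction using the \rArno{} relation shows that $\A^j \vinit = \V \Hsa^j e_1$ for $j \leq \sa - 1$; the induction step works because $e_\sa^T \Hsa^{j-1} e_1$ is the $(\sa,1)$ entry of $\Hsa^{j-1}$, which vanishes for upper Hessenberg matrices when $j-1 < \sa - 1$. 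Second, for $j = \sa$ one more multiplication by $\A$ gives $\A^\sa \vinit = \V \Hsa^\sa e_1 + (e_\sa^T \Hsa^{\sa-1} e_1)\, \rsa$, and the scalar coefficient equals the product $\tau$ of the subdiagonal entries of $\Hsa$ (standard upper Hessenberg fact), which is strictly positive since $\Hsa$ is unreduced in the \rArno{} factorization. Adding any lower-degree polynomial in $\A$ applied to $\vinit$ only contributes to the $\V p(\Hsa) e_1$ term, yielding the identity above.

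\textbf{Finishing via Pythagoras.} Applying $\Om$ to the identity gives $\Om p(\A) \vinit = \Om \V p(\Hsa) e_1 + \tau\, \Om \rsa$. The sketched orthogonality $(\Om \V)^T \Om \rsa = 0$ makes the two summands orthogonal in $\Rn{\dsk}$, so
\begin{equation*}
\norm{\Om p(\A) \vinit}^2 = \norm{\Om \V p(\Hsa) e_1}^2 + \tau^2 \norm{\Om \rsa}^2.
\end{equation*}
The second term is independent of $p$, and the first is minimized (in fact made zero) precisely when $p(\Hsa) e_1 = 0$. By the Cayley--Hamilton theorem, $\hat p_\sa(\Hsa) = 0$, so taking $p = \hat p_\sa$ achieves the minimum, giving $\hat p_\sa = \arg\min_{p \in \mathcal{PM}_\sa} \norm{\Om p(\A) \vinit}$, with minimum value $\tau \norm{\Om \rsa}$.

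The main technical point to get right is the constant $\tau$ and the claim that $\A^\sa \vinit - \V \Hsa^\sa e_1$ is exactly a scalar multiple of $\rsa$ (not of $v_{\sa+1}$ with some hidden additional contribution). This follows cleanly from iterating the \rArno{} relation and tracking which terms $e_\sa^T \Hsa^j e_1$ can survive under the Hessenberg structure; once that identity is in hand, the Pythagorean argument enabled by $\Om$-orthogonality is the exact randomized analog of Saad's classical proof.
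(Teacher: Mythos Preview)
Your proof is correct and takes a genuinely different route from the paper's. For Part~1 the two arguments are essentially equivalent, but for Part~2 the paper works abstractly: it identifies the characteristic polynomial of $\POm\A\POm$ with that of $\Hsa$ via the commutation identity $\cpol{MN}{X}=\cpol{NM}{X}X^{n-\sa}$, invokes Cayley--Hamilton on $\POm\A\POm$, then passes through Saad's Proposition~6.4 ($q(P\A_{|\K})\vinit = Pq(\A)\vinit$) and the self-adjointness Lemma~\ref{eq:self-adj} to deduce $\hat p_\sa(\A)\vinit \Omperp \K$, finally casting the minimization as a sketched least-squares problem. Your argument is more concrete and self-contained: the explicit identity $p(\A)\vinit = \V p(\Hsa)e_1 + \tau\,\rsa$ follows directly from the Hessenberg structure, and the Pythagorean split in the sketched norm immediately isolates the term to be annihilated. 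Two advantages of your approach are that it yields the minimum value $\tau\norm{\Om\rsa}=\prod_{i=1}^{\sa}h_{i+1,i}$ explicitly, and it avoids the full-rank assumption on $\A$ that the paper's proof introduces mid-argument. The paper's route, on the other hand, emphasizes the operator-theoretic picture of $\POm\A\POm$ and reuses machinery (the self-adjoint lemma, Saad's projector identity) that is of independent interest elsewhere in the analysis.
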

\begin{proof}
    For the first part of the result, multiply the \rArno{} factorization by $(\Om \V)^T \Om$ and use the $\Om$-orthonormality of $\V$ and $\rsa \Omperp \V$ to obtain
    $$\Hsa = (\Om \V)^T \Om \A \V. $$
    Using the matrix form of $\POm$, one has
    $$\POm \A \POm  = \V (\Om \V)^T \Om \A \V (\Om \V)^T \Om  = \V \Hsa (\Om \V)^T \Om. $$
    Then, for any vector $x \in \K$, there exists a $y \in \Rn{k}$ such that $x = Vy$, and thus 
    $$\POm \A \POm x = \V \Hsa (\Om \V)^T \Om \V y = \V \Hsa y,$$
    hence the representation property of $\Hsa$ w.r.t. $\POm \A \POm$ and the basis $\V$.
    Now, denote $\A_\sa^\Om \coloneqq \POm \A_{|_{\K}} $ the projected restriction of $\A$ to $\K$. It coincides with $\POm \A \POm$ on $\K$, that is
    $$\POm \A_{|_{\K}} x = \POm \A \POm x \quad \forall x \in \K.$$ 
    We note $\cpol{M}{X}$ the characteristic polynomial of a matrix $M$ in the indeterminate $X$. By using the commutation property $\cpol{MN}{X} =  \cpol{NM}{X} X^{n-\sa}$ of the characteristic  polynomial between matrices $M = (\V \Hsa) \in \Rnm{n}{\sa}$ and $N = (\Om \V)^T \Om \in \Rnm{\sa}{n}$ we obtain 
    $$\cpol{\POm \A \POm}{X}= \cpol{\V \Hsa (\Om \V)^T \Om}{X}=  \cpol{(\Om \V)^T \Om \V \Hsa}{X}  X^{n-\sa} =  \cpol{\Hsa}{X} X^{n-\sa}.$$  
    Using Cayley-Hamilton theorem such that $\cpol{\POm \A \POm}{\POm \A \POm} = 0$, we  derive for any vector $v \in \K$, 
    $$\cpol{\POm \A \POm}{\POm \A \POm}v = \cpol{\POm \A \POm}{\A_\sa^\Om }v = \cpol{\Hsa}{\A_\sa^\Om}  {\A_\sa^\Om}^{n-m} v  = 0. $$
    Since ${\A_\sa^\Om}^{n-m} v \in \K$ and assuming $\A$ full rank we obtain $\cpol{\Hsa}{\A_\sa^\Om} v_i = 0$ for a basis $\{ v_i \}$ of $\K$, from which we conclude that $\cpol{\Hsa}{\A_\sa^\Om} \tilde{v} = 0$ for every $\tilde{v} \in \K$. Now let $w \in \K$ and define $\hat{p}_{\sa} \coloneqq p_{\Hsa}$. Since $\vinit \in \K$, we have
    $$ \psOm{\hat{p}_{\sa}(\A_\sa^\Om) \vinit}{w} = 0.$$
    We now use \cite[Proposition 6.4]{saad2011numerical} which states that for any projector $P$ onto $\K$ and any polynomial $q$ of degree $\deg(q) \leq \sa$, we have $q(P \A_{|_{\K}}) \vinit = P q(\A) \vinit $.
    With $q =\hat{p}_{\sa}$ and $P = \POm$, we obtain
    $$ \psOm{\POm \hat{p}_{\sa}(\A) \vinit}{w} = 0,$$
    which implies  
    $$ \psOm{ \hat{p}_{\sa}(\A) \vinit}{\POm w} = 0 $$
    using Lemma~\ref{eq:self-adj} above. Yet $\POm w = w$ since $w \in \K$, and since it holds for all $ w \in \K$ then $ \hat{p}_{\sa}(\A)\vinit \Omperp \K  $. Since $\hat{p}_{\sa}$ is a characteristic polynomial of degree $\sa$, it satisfies the monic property, that is the coefficient of $X^{\sa}$ is $1$. Writing $$\hat{p}_{\sa}(X) = X^{\sa} - \hat{q}(X),$$ 
    with $\deg(\hat{q}) \leq \sa - 1$, we obtain that $\A^{\sa} \vinit - \hat{q}(A)\vinit \Omperp \K$.
    Note that $\hat{q}(A)\vinit \in \K$ since every $z \in \K$ is equivalently written as $z = q(\A)\vinit$ with $\deg(q)\leq \sa -1$, implying 
    \begin{align*}
        \hat{q}(A)\vinit = \POm \A^{\sa} \vinit = \arg \min_{w \in \K} \norm{\Om (\A^{\sa} \vinit - w)}  \implies & \hat{q} = \arg \min_{q, \deg(q)\leq \sa -1} \norm{\Om (\A^{\sa} \vinit - q(\A)\vinit)} \\
         \implies & \hat{p}_{\sa} = \arg \min_{p \in \mathcal{PM}_\sa }  \norm{\Om p(\A)\vinit}.
    \end{align*}
\end{proof}
In the following we consider an important quantity for the convergence of the Arnoldi process,  $\norm*{(I-\Pkr)x}$, the distance of $x$ to the Krylov subspace, for $x \in \Rn{n}$ and $\Pkr$ the orthogonal projector onto $\K$. This quantity was emphasized by Youssef Saad in \cite{saad2011numerical}. When considering eigenvectors of $\A$, $x = \eivec_1$ for instance, an ideal scenario would be $\norm*{(I-\Pkr)\eivec_1} = 0$ so that we are guaranteed to find this exact eigenvector when using the Rayleigh-Ritz method with the subspace $\K$. The following inequalities, derived in \cite{Balabanov2021RandomizedblockGram}, allow us to link the oblique projection $\POm$ arising from \rArno{} to the orthogonal one $\Pkr$ in order to derive bounds on $\norm*{(I-\POm)x}$ in the following propositions. Let $x \in \Rn{n}$ and suppose $\Om$ is an $\epsilon$-embedding for $\Span{\K,x}$. Then
\begin{align*}
    \norm*{(I-\Pkr)x} & \geq (1+\epsilon)^{-1/2} \norm{\Om (I-\Pkr)x} & \quad \text{by $\epsilon$-embedding property} \\
    & \geq  (1+\epsilon)^{-1/2} \norm{\Om (I-\POm)x} & \quad \text{by definition of $\POm$} \\
    & \geq (1-\epsilon)^{1/2}(1+\epsilon)^{-1/2} \norm*{(I-\POm)x} & \quad \text{by $\epsilon$-embedding property}
\end{align*}
In the meantime, the relation $ \norm*{(I-\POm)x} \geq \norm*{(I-\Pkr)x}$ also holds since $\Pkr$ is the orthogonal projector onto $\K$. This leads to the relation 
\begin{equation}
    \norm*{(I-\Pkr)x} \leq \norm*{(I-\POm)x} \leq \sqrt{\frac{1+\epsilon}{1-\epsilon}} \norm*{(I-\Pkr)x},
    \label{eq:POmleqPkr}
\end{equation}
that bounds the oblique projector with respect to the orthogonal projector on the Krylov subspace. We use this relation in the following to study properties of randomized Arnoldi by using established bounds in the literature for $\norm*{(I-\Pkr)x}$.
We start by considering $\eivec_i$, a given eigenvector of $\A$ for some $i \in [1,n]$ and extend a result from Lemma 6.2 in \cite{saad2011numerical} to randomized Arnoldi.
\begin{proposition}
    Assume that $\A$ is diagonalizable and consider one of its eigenvectors $\eivec_i$ for some $i \in [1,n]$. Suppose that the initial vector $\vinit$ has the expression $\vinit = \sum_{j=1}^{n} \alpha_j \eivec_j$ with respect to the eigenbasis $\{\eivec_j\}$ and that $\alpha_i \neq 0$. If $\Om$ is an $\epsilon$-embedding for $\Span{\K, u_i}$, then
    \begin{equation}
        \label{eq:saadDistance}
       \norm*{(I-\POm) \eivec_i} \leq \sqrt{\frac{1+\epsilon}{1-\epsilon}} \xi_i \gamma_i^{(\sa)} 
    \end{equation}
    where 
    \begin{equation}
       \xi_i:= \sum_{j=1, j \neq i}^{n} \frac{\abs*{\alpha_j}}{\abs*{\alpha_i}}, \quad \quad \gamma_i^{(\sa)}:= \min_{p \in \mathbb{P}^*_{\sa-1}} \max_{\eival \in \Lambda (A)-\eival_i} \abs*{p(\lambda)}, 
    \end{equation}
    and $\mathbb{P}^*_{\sa-1}$ is the set of all polynomials $p$ of degree $\leq \sa-1$ such that $p(\eival_i) = 1$.
\end{proposition}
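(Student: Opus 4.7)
The plan is to combine the geometric relation \eqref{eq:POmleqPkr} between the oblique projector $\POm$ and the orthogonal projector $\Pkr$ on $\K$ with the classical deterministic bound on $\norm*{(I-\Pkr)\eivec_i}$ that underlies Saad's Lemma 6.2. Since the assumption that $\Om$ is an $\epsilon$-embedding for $\Span{\K,\eivec_i}$ is exactly the hypothesis under which \eqref{eq:POmleqPkr} is derived (applied with $x=\eivec_i$), we immediately get
\begin{equation*}
    \norm*{(I-\POm)\eivec_i} \;\leq\; \sqrt{\tfrac{1+\epsilon}{1-\epsilon}}\,\norm*{(I-\Pkr)\eivec_i},
\end{equation*}
so the task reduces to bounding $\norm*{(I-\Pkr)\eivec_i}$ by $\xi_i \gamma_i^{(\sa)}$.

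For the deterministic bound, I would use the standard polynomial-approximation trick. Pick any $p \in \mathbb{P}^*_{\sa-1}$, i.e.\ a polynomial of degree at most $\sa-1$ satisfying $p(\eival_i)=1$. Since $\vinit \in \K$ and $\deg p \leq \sa -1$, the vector $z := p(\A)\vinit/\alpha_i$ lies in $\K$. Expanding in the eigenbasis gives
\begin{equation*}
    z - \eivec_i \;=\; \frac{1}{\alpha_i}\sum_{j=1}^n \alpha_j p(\eival_j) \eivec_j - \eivec_i \;=\; \sum_{j \neq i} \frac{\alpha_j}{\alpha_i} p(\eival_j)\,\eivec_j,
\end{equation*}
where the term $j=i$ cancels thanks to $p(\eival_i)=1$. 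Taking the norm (the $\eivec_j$ being unit vectors per \eqref{eq:EVP}) and using the triangle inequality,
\begin{equation*}
    \norm*{z - \eivec_i} \;\leq\; \sum_{j\neq i}\frac{|\alpha_j|}{|\alpha_i|}\,|p(\eival_j)| \;\leq\; \xi_i \max_{\eival \in \Lambda(\A)\setminus\{\eival_i\}} |p(\eival)|.
\end{equation*}

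Since $\Pkr$ is the orthogonal projector onto $\K$ and $z \in \K$, we have $\norm*{(I-\Pkr)\eivec_i} \leq \norm*{\eivec_i - z}$. This inequality holds for every admissible $p$, so minimizing over $p \in \mathbb{P}^*_{\sa-1}$ yields
\begin{equation*}
    \norm*{(I-\Pkr)\eivec_i} \;\leq\; \xi_i\,\gamma_i^{(\sa)}.
\end{equation*}
Combining with the first display concludes the proof.

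I do not expect a real obstacle here: the argument is a direct splicing of the classical deterministic Saad bound with the oblique-vs-orthogonal projector inequality \eqref{eq:POmleqPkr} that was already set up in the excerpt. The only subtle point to double-check is that the $\epsilon$-embedding hypothesis is correctly stated for the subspace $\Span{\K,\eivec_i}$, which is precisely what is needed for \eqref{eq:POmleqPkr} with the choice $x=\eivec_i$, and that the eigenvectors can be taken with unit norm so that the triangle inequality step produces the clean factor $\xi_i$ without any additional condition-number penalty.
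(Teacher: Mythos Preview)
Your proposal is correct and follows exactly the paper's approach: invoke the deterministic bound $\norm*{(I-\Pkr)\eivec_i} \leq \xi_i \gamma_i^{(\sa)}$ from Saad's Lemma~6.2 and then apply the oblique--orthogonal comparison \eqref{eq:POmleqPkr} with $x=\eivec_i$. The only difference is cosmetic: the paper simply cites Saad for the deterministic estimate, whereas you additionally spell out the polynomial-approximation argument behind it.
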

\begin{proof}
    It is proven in \cite{saad2011numerical}[Lemma 6.2] using same hypotheses that $\norm*{(I-\Pkr) \eivec_i} \leq \xi_i \gamma_i^{(\sa)}$ holds. Combined with the bounds of Equation~\eqref{eq:POmleqPkr}, the desired result is obtained.
\end{proof}

As explained in \cite{saad2011numerical} in the deterministic setting where this bound holds simultaneously for all eigenvectors of $A$, this result indicates that the Arnoldi process allows to approximate well the eigenvalues that lie at the extremes of the spectrum of $\A$, since the $\gamma_i^{(\sa)}$ term is smaller in this case. In the \rArno{} setting, since we consider $\Om$ to be an $\epsilon$-embedding for the space $\Span{\K,\eivec_i}$, or more generally an OSE for any subspace of dimension $\sa+1$ w.h.p.,  the bound in Equation~\eqref{eq:saadDistance} holds for a particular eigenvector $\eivec_i$.
It does not hold simultaneously for all eigenvectors of $A$, since this would require an $\epsilon$-embedding $\Om$ for the subspace $\Span{\K,\eivec_1,\dots,\eivec_n}$, which is of dimension $n$. However, we can still interpret that an eigenvector corresponding to an extreme eigenvalue has a smaller distance to the Krylov subspace in the sense of the oblique projection and is thus better approximated by \rArno{}.

A bound such as Equation~\eqref{eq:saadDistance} requires the decomposition of $\vinit$ in the eigenbasis which can be severely ill-conditioned when $\A$ is non-symmetric, and this can lead to large coefficients $\alpha_i$. In the following we discuss a bound on $\norm*{(I-\POm) \eivec_i}$ which shows its dependence on the conditioning of the eigenbasis $U$ and then a bound on $\norm*{(I-\POm) q_1}$  where $q_1$ is a Schur vector, which is then used to obtain a restrictive convergence result.
This corresponds to showing that two results from \cite{Bellalij2010FurtherAnalysisArnoldi}[Theorem 4.3] and \cite{Bellalij2016distanceeigenvectorKrylov}[Theorem 9] hold for randomized Arnoldi, modulo a factor $\sqrt{\frac{1+\epsilon}{1-\epsilon}}$ .
\begin{proposition}
    Assume $\A$ diagonalizable with $\A = U D U^{-1}$ the eigendecomposition and the eigenvector $u_1 \coloneqq U(:,1)$ corresponds to a distinct eigenvalue. Assume $\Om$ is an $\epsilon$-embedding for $\Span{\K, u_1}$. If $U^{-1} \vinit(1:\sa+1)$ has no zero components then
    \begin{equation}
        \sigma_{\min}(U)^2 \frac{N_b}{D_b} \leq \norm*{(I-\POm)\eivec_1}^2 \leq \frac{1+\epsilon}{1-\epsilon} \sigma_{\max}(U)^2 \frac{N_b}{D_b},
    \end{equation}
    where $N_b,D_b$ are products of distances between eigenvalues $\abs{\eival_i - \eival_j}$ multiplied by components of $\abs{U^{-1} \vinit}$.
\end{proposition}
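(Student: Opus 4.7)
The plan is to reduce the claim to the analogous deterministic bound on the orthogonal projector, which is exactly \cite{Bellalij2010FurtherAnalysisArnoldi}[Theorem 4.3], and then transfer it through the sketching inequality~\eqref{eq:POmleqPkr} that relates $\POm$ and $\Pkr$. The hypotheses of the proposition (diagonalizability of $\A$, distinctness of $\eival_1$, and the non-vanishing of the first $\sa+1$ components of $U^{-1}\vinit$) are precisely those of the deterministic theorem, so we may invoke it verbatim; in particular the definitions of $N_b$ and $D_b$, and the fact that the assumption on $U^{-1}\vinit$ guarantees $D_b \neq 0$, are imported directly from that reference.

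First, I would recall the deterministic bound: under these hypotheses \cite{Bellalij2010FurtherAnalysisArnoldi} gives
\begin{equation*}
\sigma_{\min}(U)^2 \frac{N_b}{D_b} \leq \norm*{(I-\Pkr)\eivec_1}^2 \leq \sigma_{\max}(U)^2 \frac{N_b}{D_b}.
\end{equation*}
Second, since $\Om$ is assumed to be an $\epsilon$-embedding for $\Span{\K, u_1}$, the inequalities~\eqref{eq:POmleqPkr} established just before the proposition apply to the vector $x = u_1$. Squaring those gives
\begin{equation*}
\norm*{(I-\Pkr)u_1}^2 \leq \norm*{(I-\POm)u_1}^2 \leq \frac{1+\epsilon}{1-\epsilon}\norm*{(I-\Pkr)u_1}^2.
\end{equation*}

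Third, the two chains are concatenated. The deterministic lower bound combined with the left inequality above yields $\sigma_{\min}(U)^2 N_b/D_b \leq \norm*{(I-\POm)u_1}^2$, and the right inequality above combined with the deterministic upper bound yields $\norm*{(I-\POm)u_1}^2 \leq \tfrac{1+\epsilon}{1-\epsilon}\sigma_{\max}(U)^2 N_b/D_b$. This is exactly the statement.

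The proof is therefore essentially a transfer result, the only non-mechanical step being to verify that the hypotheses of \cite{Bellalij2010FurtherAnalysisArnoldi}[Theorem 4.3] coincide with ours so that the deterministic statement can be cited as a black box; this is bookkeeping rather than a real obstacle. The randomized penalty $\tfrac{1+\epsilon}{1-\epsilon}$ appears only in the upper bound and reflects exactly the cost of replacing the orthogonal projector $\Pkr$ by the oblique projector $\POm$ induced by \rArno{}.
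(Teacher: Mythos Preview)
Your proof is correct and follows essentially the same approach as the paper: invoke the deterministic bound on $\norm*{(I-\Pkr)u_1}^2$ from the cited reference, then sandwich via the sketching inequality~\eqref{eq:POmleqPkr}. The only discrepancy is the citation---the paper attributes the deterministic inequality to \cite{Bellalij2016distanceeigenvectorKrylov} rather than \cite{Bellalij2010FurtherAnalysisArnoldi}[Theorem~4.3]---but the argument itself is identical.
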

\begin{proof}
    Under these assumptions, it is derived in \cite{Bellalij2016distanceeigenvectorKrylov} that $\sigma_{\min}(U)^2 \frac{N_b}{D_b} \leq \norm*{(I-\Pkr)\eivec_1} \leq  \sigma_{\max}(U)^2 \frac{N_b}{D_b}$. The bound for the oblique projector $\POm$ is obtained from Equation~\eqref{eq:POmleqPkr}.
\end{proof}

\begin{proposition}
Let $q_1$ be the first Schur vector from the decomposition $\A Q = Q R$ and $w_1$ the left eigenvector of $\A$ with eigenvalue $\eival_1$, that is $w_1^T \A = \eival_1 w_1^T$. Define $P_1 \coloneqq I - q_1 q_1^T$ and $B_1 \coloneqq P_1 \A P_1$. Assume $\Om$ is an $\epsilon$-embedding for $\Span{\K, q_1}$. If $\cos(w_1,\vinit) \neq 0$, then 
\begin{equation}
    \label{eq:schurDist}
   \norm*{(I-\POm) q_1} \leq \sqrt{\frac{1+\epsilon}{1-\epsilon}} \frac{\eta_{\sa}}{\abs{\cos(w_1,\vinit)}}, 
\end{equation}
with 
\begin{equation}
   \eta_{\sa} = \min_{p \in \mathbb{P}^*_{\sa-1}} \norm*{p(B_1) P_1 \vinit}, 
\end{equation}    
where $\mathbb{P}^*_{\sa-1}$ is the set of all polynomials $p$ of degree smaller or equal than $\sa-1$ such that $p(\eival_1) = 1.$
\end{proposition}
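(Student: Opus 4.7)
The proof will follow the same template as the two previous propositions in this subsection: reduce the oblique-projection bound to the orthogonal-projection bound on the Krylov subspace, then invoke an already-established deterministic bound. The plan is to apply Equation~\eqref{eq:POmleqPkr} to $x = q_1$, under the hypothesis that $\Om$ is an $\epsilon$-embedding for $\Span{\K, q_1}$, which gives directly
\begin{equation*}
\norm*{(I-\POm) q_1} \;\leq\; \sqrt{\frac{1+\epsilon}{1-\epsilon}}\,\norm*{(I-\Pkr) q_1}.
\end{equation*}

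Next, I would invoke the deterministic result \cite{Bellalij2016distanceeigenvectorKrylov}[Theorem 9], which under the same assumptions ($q_1$ the first Schur vector associated with a simple eigenvalue $\eival_1$, $w_1$ the corresponding left eigenvector, $\cos(w_1,\vinit)\neq 0$, and $P_1 = I - q_1 q_1^T$, $B_1 = P_1 \A P_1$) states exactly that
\begin{equation*}
\norm*{(I-\Pkr) q_1} \;\leq\; \frac{\eta_{\sa}}{\abs{\cos(w_1,\vinit)}},
\qquad \eta_{\sa} \;=\; \min_{p\in \mathbb{P}^*_{\sa-1}} \norm*{p(B_1) P_1 \vinit}.
\end{equation*}
Substituting this into the previous inequality yields the claimed bound.

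The main thing to verify is simply that the hypotheses needed by \cite{Bellalij2016distanceeigenvectorKrylov}[Theorem 9] coincide with those of the proposition, and that the $\epsilon$-embedding assumption is on a subspace large enough to contain both $\K$ and $q_1$, so that Equation~\eqref{eq:POmleqPkr} applies with $x = q_1$. Both conditions are given in the statement. Since no new analytical work is required beyond combining two established inequalities, there is no real obstacle; the proof is essentially a one-line chain. The proof will read:

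\textbf{Proof sketch.} Since $\Om$ is an $\epsilon$-embedding for $\Span{\K, q_1}$, Equation~\eqref{eq:POmleqPkr} applied with $x = q_1$ gives $\norm*{(I-\POm) q_1} \leq \sqrt{(1+\epsilon)/(1-\epsilon)}\,\norm*{(I-\Pkr) q_1}$. The deterministic bound on $\norm*{(I-\Pkr) q_1}$ from \cite{Bellalij2016distanceeigenvectorKrylov}[Theorem 9] then concludes the proof. \qed
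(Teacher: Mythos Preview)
Your approach is exactly the one the paper uses: apply Equation~\eqref{eq:POmleqPkr} with $x=q_1$ and then invoke the known deterministic bound on $\norm*{(I-\Pkr)q_1}$. The only slip is the citation: the Schur-vector bound $\norm*{(I-\Pkr)q_1}\le \eta_{\sa}/\abs{\cos(w_1,\vinit)}$ is Theorem~4.3 of \cite{Bellalij2010FurtherAnalysisArnoldi}, not Theorem~9 of \cite{Bellalij2016distanceeigenvectorKrylov} (the latter is the result used in the preceding proposition on eigenvectors).
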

\begin{proof}
    In the setting of the theorem, it is proven in  \cite{Bellalij2010FurtherAnalysisArnoldi} that $\norm*{(I-\Pkr) q_1} \leq \frac{\eta_{\sa}}{\abs{\cos(w_1,\vinit)}} $. Again, using Equation~\eqref{eq:POmleqPkr} concludes this proof.
\end{proof}
We recall that the Schur decomposition can be ordered such that that the eigenpair $(q_1, \eival_1)$ is any eigenpair of $\A$, and thus this analysis is not restricted to a given eigenpair. It is emphasized in \cite{Bellalij2010FurtherAnalysisArnoldi} that this result allows to analyze the convergence of the Arnoldi process by using Schur vectors instead of eigenvectors.
In particular it is shown that $\eta_\sa$ corresponds to the $(\sa - 1)$th residual norm obtained when solving with GMRES the linear system 
$\tilde{\A}_1 x = \tilde{v}$, where $\tilde{\A}_1 = P_1(\A - \eival_1 I)_{|q_1^\perp}$ and $\tilde{v} = P_1 \vinit$. This allows to study the decrease of $\eta_\sa$ by using the theory of GMRES. For instance, $\eta_\sa$ tends to $0$ when all eigenvalues except $\eival_1$ lie in a disk, see \cite{Bellalij2010FurtherAnalysisArnoldi}[Section 4] for further details. In summary, the \rArno{} procedure preserves established deterministic bounds on the distance to the Krylov subspace of the form $\norm*{(I-\POm)x}$ up to a $\sqrt{\frac{1+\epsilon}{1-\epsilon}}$ factor. These results show that we can expect \rArno{} to perform similarly to deterministic Arnoldi in practice.

\subsection{Analysis of randomized Implicitly Restarted Arnoldi} 

We start by analyzing in the following lemma the error that can be added when restarting a \rArno{} procedure using Ritz information.  This error has been analyzed in the deterministic case in~\cite{Morgan1996restartingArnoldimethod}.
\begin{lemma}
    \label{th:wrong-restart}
    Let $\A \V = \V \Hsa + h_{\sa+1,\sa} v_{\sa+1} e_\sa^T$ be obtained after $\sa$ steps of the \rArno{} procedure as in Definition \ref{def:rArno}, with $\Om$ being an $\epsilon$-embedding for $\Span{\K, v_{\sa+1}}$. We note the Ritz vectors $\rivec_i \coloneqq \V \yrivec_i$, where $\yrivec_i$ is an eigenvector of $\Hsa$.  Let $\rivec_1, \rivec_2$ be the Ritz vectors corresponding to the Ritz eigenvalues $(\rival_1,\rival_2)$. Assume the procedure is restarted using
    \begin{equation}
        \label{eq:vinitWrong}
        \vinitj = \rivec_1 + \alpha \rivec_2,
    \end{equation}
    with the parameter $\alpha$ to be tuned. Define 
    \begin{equation}
        \beta_{\sa i} \coloneqq h_{\sa+1,\sa} e_{\sa}^T \yrivec_i \text{ and } \delta \coloneqq \beta_{\sa 1} + \alpha \beta_{\sa 2},
    \end{equation} 
    the related cumulative residual error as appearing in Equation~\eqref{eq:r-residualNorm}. Then after one iteration of the new \rArno{} procedure, any vector $v \in \Span{\vinitj,\A \vinitj } $ satisfies 
    \begin{equation}
      v = \sigma (\rivec_1 + e)  
    \end{equation}
    with
    \begin{equation}
       \norm*{e}^2 \geq  \frac{1}{(1+\epsilon)} \cdot   \frac{(\alpha \delta)^2}{(\delta^2 + \alpha^2(\rival_1 - \rival_2)^2)} 
    \end{equation}
    and $e \in \Span{\rivec_2, v_{\sa+1}}$.
\end{lemma}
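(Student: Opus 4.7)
The plan is to expand a generic vector $v \in \Span{\vinitj, \A \vinitj}$ in the basis $\{\rivec_1, \rivec_2, v_{\sa+1}\}$, identify the coefficient patterns that must hold if $v$ is written in the form $\sigma(\rivec_1 + e)$ with $e \in \Span{\rivec_2, v_{\sa+1}}$, and then minimize the $\Om$-norm of $e$ over all such expansions. A final application of the $\epsilon$-embedding property converts an $\Om$-norm lower bound into an ordinary norm lower bound.

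First I would use the \rArno{} relation $\A \V = \V \Hsa + h_{\sa+1,\sa}v_{\sa+1}e_\sa^T$ together with the eigenrelation $\Hsa \yrivec_i = \rival_i \yrivec_i$ to establish the identity $\A \rivec_i = \rival_i \rivec_i + \beta_{\sa i} v_{\sa+1}$ for $i = 1,2$. Substituting into $\A \vinitj = \A \rivec_1 + \alpha \A \rivec_2$ gives
\begin{equation*}
    \A \vinitj = \rival_1 \rivec_1 + \alpha \rival_2 \rivec_2 + \delta\, v_{\sa+1}.
\end{equation*}
Any $v \in \Span{\vinitj, \A \vinitj}$ then has the form $v = a\vinitj + b \A \vinitj$, which in the basis $\{\rivec_1, \rivec_2, v_{\sa+1}\}$ reads $v = (a+b\rival_1)\rivec_1 + \alpha(a+b\rival_2)\rivec_2 + b\delta\, v_{\sa+1}$. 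Setting $\sigma = a + b\rival_1$ (the case $\sigma = 0$ is discarded since the statement is about vectors of the form $\sigma(\rivec_1 + e)$), the error term becomes $e = c_2 \rivec_2 + c_3 v_{\sa+1}$ with $c_2 = \alpha(a+b\rival_2)/\sigma$ and $c_3 = b\delta/\sigma$.

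Next I would exploit the key $\Om$-geometric facts: $\rivec_2 = \V \yrivec_2$ with $\|\yrivec_2\| = 1$ and $\V$ being $\Om$-orthonormal yields $\|\Om \rivec_2\|=1$; also $\|\Om v_{\sa+1}\| = 1$ and $v_{\sa+1} \Omperp \V$ gives $\psOm{\rivec_2}{v_{\sa+1}} = 0$. Hence $\|\Om e\|^2 = c_2^2 + c_3^2$. Parametrizing by $r := (a+b\rival_2)/(a+b\rival_1)$ (a valid one-parameter substitution once $\rival_1 \ne \rival_2$, which is implicit in the statement), I rewrite
\begin{equation*}
    \|\Om e\|^2 = \alpha^2 r^2 + \frac{\delta^2(r-1)^2}{(\rival_1-\rival_2)^2}.
\end{equation*}
Minimizing this quadratic in $r$ by setting the derivative to zero yields the optimal $r^\star = \delta^2/(\delta^2 + \alpha^2(\rival_1-\rival_2)^2)$ and the minimum value $(\alpha\delta)^2/(\delta^2 + \alpha^2(\rival_1-\rival_2)^2)$ after simplification.

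Finally, since $\Om$ is an $\epsilon$-embedding for $\Span{\K, v_{\sa+1}}$, and $e$ lies in this subspace, the embedding inequality $\|\Om e\|^2 \leq (1+\epsilon)\|e\|^2$ gives $\|e\|^2 \geq \frac{1}{1+\epsilon}\|\Om e\|^2$, yielding the stated bound. The only delicate step is the minimization in the substitution $r$; the rest is routine expansion. The conclusion $e \in \Span{\rivec_2, v_{\sa+1}}$ is built into the construction.
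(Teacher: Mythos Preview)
Your proof is correct and follows essentially the same route as the paper: expand $\A\vinitj$ via the \rArno{} relation, write a generic element of $\Span{\vinitj,\A\vinitj}$ in the form $\sigma(\rivec_1+e)$, compute $\norm{\Om e}^2$ using the $\Om$-orthonormality of $\rivec_2$ and $v_{\sa+1}$, minimize, and finish with the $\epsilon$-embedding inequality. The only cosmetic difference is the parametrization of the minimization---the paper writes $v=\frac{\sigma}{\rival_1-\gamma}(\A\vinitj-\gamma\vinitj)$ and minimizes over $\gamma$, while you minimize over the ratio $r=(a+b\rival_2)/(a+b\rival_1)$; the two are related by $r=(\rival_2-\gamma)/(\rival_1-\gamma)$ and yield the same optimum.
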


\begin{proof}
    We follow the proof from \cite{Morgan1996restartingArnoldimethod}[Theorem 1] while taking into account that the basis is $\Om$-orthonormal. We multiply the choice of $\vinitj$ from Equation~\eqref{eq:vinitWrong} by $\A$ to get 
    $$\A \vinitj = \rival_1 \rivec_1 + \alpha \rival_2 \rivec_2 + \delta v_{\sa+1}$$
    using the \rArno{} factorization. Then for all $\gamma \in \mathbf{R}$, 
    $$\A \vinitj - \gamma \vinitj = (\rival_1 - \gamma) \rivec_1 + \alpha (\rival_2 - \gamma) \rivec_2 + \delta v_{\sa+1}.$$
    Now if a vector $v$ is in $\Span{\vinitj,\A \vinitj }$, there exists $\gamma,\sigma \in \mathbb{R}$ such that
    \begin{align*}
        v & = \frac{\sigma}{(\rival_1 - \gamma)}(\A \vinitj - \gamma \vinitj) \\
        & = \sigma \left( \rivec_1 + \alpha \frac{(\rival_2 - \gamma)}{(\rival_1 - \gamma)} \rivec_2  + \frac{\delta}{(\rival_1 - \gamma)}  v_{\sa+1} \right) \\ 
        & = \sigma (\rivec_1 + e).
    \end{align*}
     This gives the first part of the result.
     Let us now focus on the norm of $e \in \Span{\rivec_2, v_{\sa+1}}$. Using $\Om$-orthonormality of $\rivec_2$ and $v_{\sa+1}$ leads to
    $$ \norm{\Om e}^2 = \alpha^2 \frac{(\rival_2 - \gamma)^2}{(\rival_1 - \gamma)^2}  + \frac{\delta^2}{(\rival_1 - \gamma)^2}.$$ 
    To find a lower bound for $\norm{\Om e}$, we minimize it as a function of $\gamma$ and obtain
    $$\gamma^* \coloneqq \arg \min_{\gamma} \norm{\Om e}^2 = \rival_2 + \frac{\delta^2}{\alpha^2 (\rival_2 - \rival_1)}.$$
    Substituting $\gamma^*$ in $\norm{\Om e}^2$ and using the $\epsilon$-embedding property concludes the proof. 
\end{proof}
This result shows that, similarly to the deterministic case, restarting in the randomized framework using a linear combination of Ritz information can add errors to $\rivec_1$, where $\rivec_1$ should approximate $\eivec_1$ in the newly generated subspace. Indeed, $\rivec_1$ is not solely contained in $\Span{\vinitj,\A \vinitj }$ since there is always an additive term $e$. Following Morgan reasoning in \cite{Morgan1996restartingArnoldimethod},  suppose a strong eigenvalue gap such that $\delta \ll \abs{\rival_1 - \rival_2}$ and that $\rivec_1$ is a good approximation to the eigenvector $\eivec_1$ such that $\beta_{\sa 2} \gg \beta_{\sa 1}$. Then after the restart an error $\norm{e} \propto \beta_{\sa 2}$ was added to $\rivec_1$,  which is not desirable. It is shown in \cite{Morgan1996restartingArnoldimethod} that the only choice that does not add error in this situation is $\alpha \coloneqq -\frac{\beta_{\sa 1}}{\beta_{\sa 2}}$, and that is what IRA does implicitly. We show later in Proposition~\ref{prop:spansrestart} that rIRA does not add any error through the restarts. To continue our analysis, first we derive a key result that relates any \rArno{} factorizations of a given matrix $A$.
It shows that any two \rArno{} factorizations that have been established starting from the same initial vector are identical, thus a \rArno{} decomposition is uniquely defined by $\V e_1 = \vinit$.

\begin{theorem}[Randomized implicit Q-theorem]
    \label{th:r-implicit-Q}
    Consider two \rArno{} factorizations obtained after $\sa$-steps of the \rArno{} procedure as in Definition \ref{def:rArno}
    \begin{align*}
        AV = VH + re_k^T, \\
        AQ = QG + fe_k^T,
    \end{align*}
    where $\Om$ is an $\epsilon$-embedding for $\Span{V,r,Q,f}$, the matrices $V,Q$ are $\Om$-orthonormal and $H,G$ are upper Hessenberg with positive sub-diagonal elements. If the starting vectors are equal, i.e.\@ $Ve_1 = Qe_1$, and both $r \Omperp V$ and $f \Omperp Q$, 
    then $V = Q, H = G$, and $r = f$. equivalently, the decompositions are the same.
\end{theorem}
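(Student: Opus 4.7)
The plan is to mimic the classical implicit Q-theorem proof for Arnoldi, replacing ordinary orthonormality with $\Om$-orthonormality throughout. I proceed by induction on the column index $j = 1, \ldots, k$, showing simultaneously that $V(:,j) = Q(:,j)$ and that column $j{-}1$ of $H$ equals column $j{-}1$ of $G$. The base case is exactly the hypothesis $Ve_1 = Qe_1$.

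For the inductive step, the upper Hessenberg structure of $H$ together with the \rArno{} relation gives, for $j < k$,
\[
A v_j = \sum_{i=1}^{j+1} h_{i,j}\, v_i,
\]
with an analogous identity for $Q$ and $G$, while for $j = k$ the sums truncate at $i = k$ and pick up the residual terms $r$ and $f$ respectively. Left-multiplying by $(\Om V)^T \Om$ extracts the coefficients: since $(\Om V)^T(\Om V) = I_k$ by $\Om$-orthonormality and $(\Om V)^T(\Om r) = 0$ by the hypothesis $r \Omperp V$, one obtains $h_{l,j} = (\Om v_l)^T(\Om A v_j)$ for $l \leq j$. The inductive hypothesis $v_l = q_l$ for $l \leq j$ then forces $h_{l,j} = g_{l,j}$ for those indices. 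Isolating the next basis vector,
\[
h_{j+1,j}\, v_{j+1} \;=\; A v_j - \sum_{i=1}^{j} h_{i,j}\, v_i \;=\; A q_j - \sum_{i=1}^{j} g_{i,j}\, q_i \;=\; g_{j+1,j}\, q_{j+1},
\]
and applying $\Om$ and taking norms, combined with $\norm*{\Om v_{j+1}} = \norm*{\Om q_{j+1}} = 1$ and the positivity of both subdiagonal entries, yields $h_{j+1,j} = g_{j+1,j}$ and then $v_{j+1} = q_{j+1}$. The terminal case $j = k$ is handled the same way: the extraction gives $h_{i,k} = g_{i,k}$ for all $i \leq k$, and then directly $r = A v_k - \sum_{i=1}^{k} h_{i,k} v_i = f$.

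I do not expect a serious obstacle; the only non-cosmetic change from the deterministic implicit Q theorem is that inner products against basis vectors must be taken in the $\Om$-sense to cancel the cross terms correctly. This relies on two identities built into Definition~\ref{def:rArno}, namely $(\Om V)^T(\Om V) = I_k$ and $(\Om V)^T(\Om r) = 0$ (with the analogues for $Q$ and $f$), both of which are part of the hypotheses. The positivity of the subdiagonal entries plays the same sign-disambiguation role it does in the classical proof, namely guaranteeing that $h_{j+1,j} = g_{j+1,j}$ rather than differing by a sign, so that $v_{j+1} = q_{j+1}$ follows unambiguously.
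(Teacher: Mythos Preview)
Your proof is correct and follows essentially the same approach as the paper's: both argue by induction, use the \rArno{} recurrence together with $\Om$-orthonormality to extract the Hessenberg entries, and invoke positivity of the subdiagonal to fix signs. The only cosmetic difference is that the paper inducts on the size $k$ of the factorization (peeling off the last column via a block decomposition), whereas you induct column-by-column within a fixed factorization; the underlying computations are identical.
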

\begin{proof}
    We proceed with an induction on $k$, the number of columns of $Q,V$.
    For $k = 1$, we have 
    \begin{align*}
        Av = h  v + r, \\
        Aq = g  q + f,
    \end{align*}
    with $h,g$ scalars. The hypothesis $Ve_1 = Qe_1$ directly gives $v = q$, and by substracting these two equations we obtain
    $$ 0 = (h-g)v + (r-f).$$
    Multiplying by $(\Om v)^T \Om$ and using the sketch-orthogonal hypotheses $(\Om v)^T \Om r = (\Om v)^T \Om f = 0$ shows that $0 = h - g$, thus $h = g$ and $r = f$.
    Assume that the result holds for $k$ and that we are given factorizations of dimension $k+1$. Decompose $V,Q$ and $H,G$ such that $A[\tilde{V} \; v] = [\tilde{V} \; v]
    \begin{pmatrix}
        \tilde{H} & h \\
        \beta e_k^T & \alpha \\ 
    \end{pmatrix} 
    + r e_{k+1}^T$  and 
    $A[\tilde{Q} \; q] = [\tilde{Q} \; q]
    \begin{pmatrix}
        \tilde{G} & g \\
        \gamma e_k^T & \delta \\ 
    \end{pmatrix} 
    + f e_{k+1}^T$
    thanks to the Hessenberg structure. This gives
    \[
    \left\{ \begin{array}{c}
        AV = VH + re_k^T \\
        AQ = QG + fe_k^T 
    \end{array} \right.
    \implies
    \left\{ \begin{array}{c}
        A[\tilde{V}, v] = [\tilde{V}\tilde{H} + v \beta e_k^T, \tilde{V} h + \alpha v + r] \\
        A[\tilde{Q}, q] = [\tilde{Q}\tilde{G} + q \gamma e_k^T, \tilde{Q} g + \delta q + f]
    \end{array}, \right.
    \]
    which translates to 
    \[
    \left\{ \begin{array}{c}
        A \tilde{V} = \tilde{V} \tilde{H} + v \beta e_k^T \\
        A \tilde{Q} = \tilde{Q} \tilde{G} + v \gamma e_k^T 
    \end{array} \right.
    \text{and}
    \left\{ \begin{array}{c}
        Av = \tilde{V}h + \alpha v + r \\
        Av = \tilde{Q}g + \delta q + f
    \end{array}. \right.
    \]
    The left system of equations is of dimension $k$ and satisfies the induction hypothesis, while the right system of equations can be developed similarly to the case $k=1$.
\end{proof}
We have shown in section~\ref{sec:rIRA} that after truncation in Algorithm~\ref{alg:rIRA}, a new legitimate \rArno{} factorization is obtained. The above result adds that this factorization is the same as the one obtained by starting from $\vinitj \coloneqq \V \Q e_1$, where we recall that $\Q$ is the orthogonal matrix obtained from the QR shifted steps on $\Hba$. Let us specify the expression for the new starting vector $\vinitj$ in rIRA with respect to the previous $\vinit$. We summarize here the rIRA scheme:
\begin{enumerate}
    \item Obtain a length $\ba$ \rArno{} factorization $\A \V = \V \Hba + \rba e_{\ba}^T$.
    \item Apply the shifted QR algorithm to $\Hba$ to obtain the unitary similarity transformation $ \Q^T \Hba \Q $.
    \item Truncate the resulting $\A (\V \Q) = (\V \Q)  \Q^T \Hba \Q + \rba e_{\ba}^T \Q $ to its first $\sa$ vectors to obtain a length $\sa$ \rArno{} decomposition and go back to 1.
    
\end{enumerate}
It is important to note that properties of the shifted QR algorithm applied to a Hessenberg matrix $\Hba$  have been derived in works such as \cite{Lehoucq1995DeflationTechniquesImplicitly} or \cite{Miminis1991ImplicitShiftingQR} and can be used here since this part is not modified in our randomized approach. Suppose the shifts $\shi_i, i = 1,\dots, \nshi$ are used, defining the filtering polynomial $\polp(\lambda) = \prod_{i = 1}^{\nshi}(\lambda - \shi_i)$. We then have from \cite[Theorem 3.2]{Lehoucq1995DeflationTechniquesImplicitly} a relation between the underlying filtering polynomial $\polp$ and the matrix $\Q$,
\begin{equation}
   \Q e_1 = \polp(\Hba) \rho e_1, 
\end{equation}
where $\rho$ is a normalisation constant. In addition, \cite[Lemma 4.1]{Lehoucq1995DeflationTechniquesImplicitly} 
states that, for a polynomial $\polp(\A)$ and a decomposition of the form $\A \V = \V \Hba + \rba e_{\ba}^T$, where $\Hba$ is Hessenberg and unreduced, and $\V$ does not have to be orthogonal (hence can be $\Om$-orthonormal),
\begin{equation}
  \polp(\A) \V [e_1, \dots, e_{\sa}] = \V \polp(\Hba) [e_1, \dots, e_{\sa}].  
\end{equation}
In other words, even though $\V \in \Rnm{n}{(\ba)}$ is not a true invariant subspace for $\A$, given that $\rba \neq 0$, one can still have a commutation property for a polynomial of degree at most $\nshi$. The two previous results combined lead to
\begin{align}
    \vinitj & \coloneqq \V \Q e_1 \nonumber \\
    & = \V \polp(\Hba) \rho e_1 \nonumber \\
    & = \rho \polp(\A) \V e_1 \nonumber \\
    & =  \rho \polp(\A) \vinit. \label{eq:vinitj-polpA}
\end{align}
Our algorithm rIRA is thus achieving the same goal as the deterministic IRA, namely it applies implicitly a filtering polynomial on $\A$ to the starting vector. 
Using the randomized implicit Q theorem above, we can state that computing a new \rArno{} process starting from $\vinitj = \rho \polp(\A) \vinit$ is equivalent to obtaining the decomposition after shifted QR and truncation. Indeed, in both cases we have $\vinitj = \rho \polp(\A) \vinit$ and thus the factorizations are equal. Moreover, the subspace spanned by the columns of $\V \Q$ after truncation is  
\begin{equation}
        \Span{\V \Q [e_1, \dots, e_{\sa}]} = \K(\A,\rho \polp(\A) \vinit).
\end{equation}
We now discuss the consequences of the exact shifts selection strategy briefly mentioned earlier, especially on $\vinitj$. Let $\A \V = \V \Hba + \rba e_{\ba}^T$ be an unreduced length $\ba$ \rArno{} factorization as in Definition~\ref{def:rArno}. Denote the spectrum of $\Hba$ as $\Lambda(\Hba) = \{\rival_1,\dots,\rival_{\sa}\} \cup \{\rival_{\sa+1},\dots,\rival_{\ba}\}$ with an arbitrary separation, with potential complex conjugate pairs in the same subset. The exact shifts selection is defined as 
\begin{equation}
    \shi_i \coloneqq \rival_i, \; i = \sa +1, \dots, \ba.
\end{equation}
Then \cite[Theorem 4.4]{Lehoucq1995DeflationTechniquesImplicitly} states that the shifted QR steps produce
\begin{equation}
    \Q^T \Hba \Q = \begin{pmatrix}
        H_{11} & M \\
        0 & H_{12}
    \end{pmatrix}, 
\end{equation}
where $\Lambda(H_{11}) = \{\rival_1,\dots,\rival_{\sa}\}$ and $\Lambda(H_{12}) = \{\rival_{\sa+1},\dots,\rival_{\ba}\}$. Most importantly the procedure has annihilated the $(\sa+1,\sa)$ entry of $\Hba$.   
In addition, given the $\sa \times \sa$ square orthogonal Schur factorization 
\begin{equation}
  H_{11} Z_1 = Z_1 R_1  
\end{equation}
and defining 
\begin{equation}
  Q_1 \coloneqq \Q [e_1, \dots, e_{\sa}],  
\end{equation}
we have that
\begin{equation}
    \vinitj = \V \Q e_1 = \V Q_1 e_1 = \V Q_1 Z_1 Z_1^T e_1\in \Span{\V Q_1 Z_1}.
\end{equation}
The two main takeaways from this discussion are:
\begin{itemize}
    \item Since the updated Hessenberg matrix $\Hsaplus$ of line~\ref{alg:rIRA-truncate} of Algorithm~\ref{alg:rIRA} is defined as $\Hsaplus \coloneqq H_{11}$, and the eigenvalues of $\Hsaplus$ are the Ritz values of $\A$, it can be seen that the exact shifts allow to discard the unwanted eigenvalues and thus target a specific part of the spectrum of $A$. This in-built capacity of rIRA differentiates it from many other eigensolvers that require some preconditioning to do so such as shift and invert, see how in \cite{Parlett1987Complexshiftinvert} and why in \cite{Scott1982AdvantagesInvertedOperators} for instance.
    \item $(\V Q_1 Z_1)$ are approximate $\Om$-orthonormal Schur vectors of $\A$. Indeed, since $\A (\V Q_1) = (\V Q_1) H_{11} + \tilde{\rsa} e_{\sa}^T $ after truncation, one has 
    \begin{equation}
    \label{eq:approxOmSchur}
       \A (\V Q_1 Z_1) = (\V Q_1 Z_1) R_1  + \tilde{\rsa} e_{\sa}^T Z_1 \approx (\V Q_1 Z_1) R_1  
    \end{equation}
    with $(\V Q_1 Z_1)$ being $\Om$-orthogonal. Since $\vinitj \in \Span{\V Q_1 Z_1}$, this means that the starting vector is close to a linear combination of $\Om$-orthonormal Schur vectors of $\A$.  In other words, we achieved one of the main goals of the method, motivated by Theorem~\ref{th:schur-invariant}, which makes $\rsa$ tend to $0$.
\end{itemize}

We now address the question of potential errors added to the Krylov subspace when restarting the factorization, as stated in Lemma~\ref{th:wrong-restart}. To do so, we show that the subspace generated by $\V \Q [e_1,\dots, e_{\sa}] = \V Q_1$ spans exactly the previous Ritz vectors $\rivec, \dots, \rivec_\sa$.
\begin{proposition}
    \label{prop:spansrestart}
    Consider a \rArno{} factorization $\A \V = \V \Hba + \rba e_{\ba}^T$ as in Definition \ref{def:rArno} with wanted Ritz values $\rival_1,\dots,\rival_\sa$ and associated Ritz vectors $\rivec_1,\dots,\rivec_\sa$. After the shifted QR steps and the truncation have been applied as in algorithm rIRA~\ref{alg:IRA}, the new Krylov basis $\V Q_1 \in \Rnm{n}{\sa}$ satisfies 
    \begin{equation}
    \Span{\V \Q_1} = \Span{\rivec_1,\dots,\rivec_{\sa}}.
    \end{equation}
\end{proposition}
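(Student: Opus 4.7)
The plan is to show that $\Span{\V Q_1}$ coincides with the span of the Ritz vectors by transferring the problem from $\Rn{n}$ to $\Rn{\ba}$ via the injective linear map $\V$, and identifying $\Span{Q_1}$ as the $\Hba$-invariant subspace associated with the wanted Ritz eigenvalues.

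First I would use the consequence of the exact shifts strategy already recalled above: after the $\nshi$ shifted QR sweeps,
\[
\Q^T \Hba \Q = \begin{pmatrix} H_{11} & M \\ 0 & H_{12} \end{pmatrix},
\]
with $\Lambda(H_{11}) = \{\rival_1, \dots, \rival_\sa\}$ the wanted Ritz values. Equating the first $\sa$ columns of the equivalent relation $\Hba \Q = \Q \Q^T \Hba \Q$ yields $\Hba Q_1 = Q_1 H_{11}$, so $\Span{Q_1} \subset \Rn{\ba}$ is an $\Hba$-invariant subspace of dimension $\sa$ whose restriction has spectrum $\{\rival_1,\dots,\rival_\sa\}$.

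Next I would identify this invariant subspace with $\Span{\yrivec_1,\dots,\yrivec_\sa}$. Since $\Hba$ is unreduced Hessenberg, each wanted Ritz value $\rival_i$ has a one-dimensional eigenspace spanned by $\yrivec_i$ (with the standard convention for complex conjugate pairs, which stay inside a common real invariant $2 \times 2$ block). The $\sa$-dimensional $\Hba$-invariant subspace associated with the eigenvalues $\{\rival_1,\dots,\rival_\sa\}$ is unique when these are disjoint from $\Lambda(H_{12}) = \{\rival_{\sa+1},\dots,\rival_\ba\}$, which is ensured by the exact shift selection. Hence $\Span{Q_1} = \Span{\yrivec_1, \dots, \yrivec_\sa}$ as subspaces of $\Rn{\ba}$.

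Finally I would push this equality through $\V$. Because $\V$ is $\Om$-orthonormal, Corollary~\ref{cor:rando_sval} gives $\sigma_{\min}(\V) > 0$, so $\V$ is injective as a linear map $\Rn{\ba} \to \Rn{n}$ and preserves spans:
\[
\Span{\V Q_1} = \V \cdot \Span{Q_1} = \V \cdot \Span{\yrivec_1,\dots,\yrivec_\sa} = \Span{\V \yrivec_1, \dots, \V \yrivec_\sa} = \Span{\rivec_1, \dots, \rivec_\sa}.
\]
The only delicate step is the uniqueness of the invariant subspace in step two; the disjointness of the wanted and unwanted spectra coming from the exact shifts is what makes the argument go through, and the treatment of complex conjugate pairs only requires keeping such pairs together when partitioning the Ritz spectrum, which is precisely the convention adopted in the definition of the shift selection.
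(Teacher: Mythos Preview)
Your proof is correct and takes a genuinely different route from the paper's. The paper works explicitly with the polynomial filter: it uses the relation $\Q T = \polp(\Hba)$ together with the zero pattern of the upper-triangular factor $T$ (its last $\nshi$ diagonal entries vanish under exact shifts) to write each $q_i = \Q e_i$ as $\polp(\Hba)$ applied to a vector in $\Span{e_1,\dots,e_i}$; expanding in the assumed eigenbasis of $\Hba$ and using $\polp(\rival_j)=0$ for $j>\sa$ then gives $q_i \in \Span{\yrivec_1,\dots,\yrivec_\sa}$, and a dimension count plus the injectivity of $\V$ finishes. Your argument bypasses this polynomial algebra entirely by reading off $\Hba Q_1 = Q_1 H_{11}$ from the block-triangular form and invoking uniqueness of the spectral invariant subspace. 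This is shorter and more conceptual; what the paper's computation additionally buys is the explicit formula $\vinitj = \sum_{i=1}^{\sa} \alpha_i \polp(\rival_i)\,\rivec_i$ for the new starting vector, which makes the filtering action on the Ritz components visible and ties into the surrounding analysis. Both arguments rest on the same implicit hypothesis (diagonalizability of $\Hba$, or at least disjointness of the wanted and unwanted Ritz spectra), which you correctly flag as the delicate point.
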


\begin{proof}
We recall the relation between $\polp$ and $\Q$ from \cite[Theorem 3.2]{Lehoucq1995DeflationTechniquesImplicitly},
$$ \Q T = \polp(\Hba) $$
where $T = R^p \dots R^1$ is the product of all upper triangular factors obtained when computing QR factorizations on line~\ref{alg:rIRA-qrfacto}. When $\nshi$ exact shifts are applied to a Hessenberg matrix, we have from \cite[Theorem 4.1]{Miminis1991ImplicitShiftingQR} that $T$ is upper triangular with $k$ non-zero first  diagonal entries and $p$ zeros last diagonal entries. Thus
$$\Q T e_1 = \Q t_{11} e_1 = \polp(\Hba) e_1 \implies q_1 \coloneqq Qe_1 =  \polp(\Hba) w $$
for some vector $w \coloneqq e_1 / t_{11} \in \Rn{\ba}$. If one writes $w = \sum_{i = 1}^{\ba} \alpha_i \yrivec_i$ its decomposition in the eigenbasis of $\Hba$ whose existence is assumed, then 
$$ q_1 = \sum_{i = 1}^{\ba} \alpha_i \polp(\Hba) \yrivec_i = \sum_{i = 1}^{\ba} \alpha_i \polp(\rival_i) \yrivec_i = \sum_{i = 1}^{\sa} \alpha_i \polp(\rival_i) \yrivec_i \in \Span{\yrivec_1,\dots,\yrivec_{\sa}}.$$
Thus 
\begin{equation}
    \vinitj \coloneqq \V q_1 = \sum_{i = 1}^{\sa} \alpha_i \polp(\rival_i) \rivec_i \in \Span{\rivec_1,\dots,\rivec_{\sa}}.
\end{equation}
This property emphasizes that we are indeed restarting using the Ritz information from the previous iteration, and that the strategy of exact shifts has the advantage of cancelling out the components along the eigenvectors related to the unwanted eigenvalues $\{\rival_{\sa+1},\dots,\rival_{\ba}\}$. A quick induction on $q_i = QT e_i = Q (t_{11} e_1 + \dots +t_{ii} e_i ) = \polp(\Hba) e_i $ that uses the structure of $T$ explained above gives
$$ \V \Span{q_1,\dots,q_{\sa}} \subset \V \Span{\yrivec_1,\dots,\yrivec_{\sa}}.$$
Finally, we know from corollary \ref{cor:rando_sval} that $\V$ is well conditioned and will let linearly independent any set of vectors on which it is applied, obtaining
$$\Span{\V \Q [e_1,\dots, e_{\sa}]} = \Span{\rivec_1,\dots,\rivec_{\sa}}.$$
\end{proof}
This proves that using exact shifts and truncation has the effect to implicitly restart the \rArno{} factorization with a new Krylov subspace of dimension $\sa$ that contains exactly the previous wanted Ritz vectors without any additional error added, addressing the concerns of Lemma~\ref{th:wrong-restart}.

\subsection{A convergence result for randomized Implicitly Restarted Arnoldi} 

To analyze the convergence of rIRA, we first discuss the dependence of $\rsa$ as a function of $\vinit$ for a given \rArno{} factorization 
\begin{equation}
    \label{eq:rArno-cv}
    \A \V = \V \Hsa + \rsa e_{\sa}^T
\end{equation} with $\Hsa$ unreduced as in Definition \ref{def:rArno}. For this, similarly to the discussion that led to \cite[Theorem 2.7]{Sorensen1992ImplicitApplicationPolynomial}, we define 
$$ K \coloneqq [\vinit, \dots, \A^{\sa-1} \vinit] \in \Rnm{n}{\sa} \quad \text{and} \quad F \coloneqq  
\begin{bmatrix}
    0 &  &  & &  \\
    1 & & & & \\
    & 1 & & , & g \\
    & & \ddots & & \\
    & & & 1 & \\
\end{bmatrix} \in \Rnm{\sa}{\sa},$$
the Krylov matrix $K$, which is full rank since $\Hsa$ is unreduced, and the associated companion matrix $F$, with $ g \in \Rn{\sa}$ a vector that we are free to specify later. These objects satisfy:
\begin{itemize}
    \item A factorization of $\A$ which writes
        \begin{equation}
        \label{eq:AKKF}
            \A K = K F + t e_{\sa}^T
        \end{equation} 
    where $ t \coloneqq \A^{\sa} \vinit - Kg$,
    \item $\hat{p}(\lambda) = \lambda^{\sa} + \ps{[\lambda^0 \dots \; \lambda^{\sa-1}]}{g} $ is the characteristic polynomial of $F$,
    \item $t = \hat{p}(\A) \vinit$.
\end{itemize}
Consider an $\epsilon$-embedding $\Om$ for $\Span{\K,\A^{\sa} \vinit}$ and choose $g$ such that it minimizes the sketched norm of the residual, i.e.\@ 
\begin{equation}
    \label{eq:g-Om-lsq}
    g = \arg \min_{w \in \Rn{\sa}} \norm*{\Om(\A^{\sa} \vinit - K w)}. 
\end{equation}
To obtain its expression, consider the randomized QR decomposition of $K$, with Algorithm~\ref{alg:sketch-orthonormal} for instance, $K = QR$ where $Q \in \Rnm{n}{\sa}$ is $\Om$-orthonormal and $R$ upper triangular with strictly positive diagonal elements. Then the solution to the least squares problem~\eqref{eq:g-Om-lsq} is 
$$ g = [(\Om K)^T (\Om K)]^{-1} (\Om K)^T \Om \A^{\sa} \vinit = [R^T R]^{-1} R^T (\Om Q)^T \Om \A^{\sa} \vinit \implies g = R^{-1}  (\Om Q)^T \Om \A^{\sa} \vinit.$$
This leads to $t = \A^{\sa} \vinit -  Q (\Om Q)^T \Om \A^{\sa} \vinit$ such that 
$$ (\Om K)^T \Om t = R^T  (\Om Q)^T \Om \A^{\sa} \vinit - R^T  (\Om Q)^T \Om \A^{\sa} \vinit = 0 \implies t \Omperp \K, $$
i.e.\@ $t$ is $\Om$-orthogonal to the Krylov subspace $\K$, hence $\Om$-orthogonal to $Q$. We now work on Equation~\eqref{eq:AKKF} to obtain a \rArno{} factorization. Multiplying it by $R^{-1}$ gives $\A (K R^{-1}) = (K R^{-1})R F R^{-1} + t e_{\sa}^T R^{-1}$, or equivalently
\begin{equation}
    \label{eq:AQQG}
    \A Q = Q G + f e_{\sa}^T 
\end{equation}
with $Q = K R^{-1}$, $G \coloneqq R F R^{-1}$ and $f \coloneqq t / \rho_{\sa \sa}$ where $\rho_{ii} \coloneqq e_i^T R e_i$. Note that $G$ has the same characteristic polynomial as $F$ since they are similar. We now prove that Equation~\eqref{eq:rArno-cv} and Equation~\eqref{eq:AQQG} are equal by using Theorem~\ref{th:r-implicit-Q} whose hypotheses are verified below:
\begin{itemize}
    \item $Q$ is $\Om$-orthonormal,
    \item $G$ is upper Hessenberg because $F$ is and $R,R^{-1}$ are upper triangular which does not change the Hessenberg structure when multiplied with $F$,
    \item $f \Omperp Q$ because $f$ is proportional to $t$,
    \item $Qe_1 = K R^{-1} e_1 = K e_1 \norm*{\Om \vinit}^{-1} = K e_1 = \vinit = V e_1$, using $\vinit$ of $\Om$-norm $1$ and $R^{-1} e_1 = 1/\rho_{11} = \norm*{\Om \vinit}^{-1}$ from the randomized QR decomposition. 
\end{itemize}
We thus obtain $\V = Q, \; \Hsa = G, \; \rsa = f$. This means that $\Hsa$ has $\hat{p}$ as its characteristic polynomial. A relationship between $\rsa$ and $\vinit$ is obtained by using $h_{j+1,j} = e_{j+1}^T \Hsa e_j$ and noting that according to the \rArno{} procedure $h_{j+1,j} = \norm*{\Om r_j}$ for each vector $r_j$, $1 \leq j \leq k$. Since the analysis above holds for any size of a \rArno{} factorization, we have for a given $j$,
$$ h_{j+1,j} = \norm*{\Om r_j} = \norm*{\Om f_j} = \norm*{\Om t_j / \rho_{jj}} = \norm*{\Om \hat{p}_j(\A) \vinit } / \rho_{jj}.$$
In addition,
$$ h_{j+1,j} = e_{j+1}^T R F R^{-1} e_j = \rho_{j+1,j+1} / \rho_{jj}$$
leading to  $ \rho_{j+1,j+1} = \norm*{\Om \hat{p}_j(\A) \vinit }$. This leads to the following theorem.
\begin{theorem}
    \label{th:rsa-vinit}
    Let $\A V_j = V_j H_j + r_j e_j^T$ be a sequence of j successive inner \rArno{} steps as obtained from Algorithm~\ref{alg:RCGS2-Arnoldi} with $1 \leq j \leq k$ such that $V_j \in \Rnm{n}{j}$, $H_j \in \Rnm{j}{j}$, $\Hsa$ is unreduced and $\Om$ is an $\epsilon$-embedding for $\Span{V_\sa,\rsa}$. Let $\hat{p}_j$ be the characteristic polynomial of $H_j$. Then,
    \begin{equation}
        r_j = \frac{\hat{p}_{j}(A)v_1}{\norm{\Om \hat{p}_{j-1}(A)v_1}}.
    \end{equation}
    In addition, using Theorem~\ref{th:charac-poly-min} we obtain that $\hat{p}_{j}$ minimizes $\norm*{\Om p(\A) \vinit}$ over all monic polynomial $p$ of degree $j$.
\end{theorem}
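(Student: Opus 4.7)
The plan is to leverage the Krylov-companion construction presented in the discussion preceding the theorem. I would begin by forming $K_j \coloneqq [\vinit, \A \vinit, \dots, \A^{j-1} \vinit]$ together with the companion matrix $F_j \in \Rnm{j}{j}$ whose last column $g_j$ is chosen as the minimizer of $\norm*{\Om(\A^j \vinit - K_j w)}$ over $w \in \Rn{j}$. By construction this gives $\A K_j = K_j F_j + t_j e_j^T$ with $t_j = \hat{p}_j(\A) \vinit$ (since $\hat{p}_j$ is by definition the characteristic polynomial of $F_j$) and $t_j \Omperp \Span{K_j}$ from the normal equations of the sketched least-squares problem.

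The next step is to obtain a \rArno{}-like factorization from the relation above. Using the randomized QR factorization $K_j = Q_j R_j$ produced by Algorithm~\ref{alg:sketch-orthonormal} (with $Q_j$ $\Om$-orthonormal and $R_j$ upper triangular with positive diagonal entries $\rho_{ii}$), I would right-multiply by $R_j^{-1}$ to obtain
\begin{equation*}
    \A Q_j = Q_j G_j + f_j e_j^T, \qquad G_j = R_j F_j R_j^{-1}, \qquad f_j = t_j / \rho_{jj}.
\end{equation*}
Here $G_j$ remains upper Hessenberg (product of an upper-triangular, a companion, and an upper-triangular factor), $f_j \Omperp Q_j$ since $t_j$ is $\Om$-orthogonal to $\Span{K_j} = \Span{Q_j}$, and $Q_j e_1 = K_j R_j^{-1} e_1 = \vinit = V_j e_1$ because the starting vector has unit $\Om$-norm. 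The randomized implicit Q theorem (Theorem~\ref{th:r-implicit-Q}) then forces $Q_j = V_j$, $G_j = H_j$, and $f_j = r_j$; in particular $\hat{p}_j$ is the characteristic polynomial of $H_j$ and
\begin{equation*}
    r_j = \hat{p}_j(\A) \vinit / \rho_{jj}.
\end{equation*}

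It remains to identify $\rho_{jj}$ with $\norm*{\Om \hat{p}_{j-1}(\A) \vinit}$. For this I would compute the subdiagonal entry $h_{j,j-1}$ in two ways. From $G_j = R_j F_j R_j^{-1}$, a direct calculation gives $h_{j,j-1} = \rho_{jj} / \rho_{j-1,j-1}$. From the \rArno{} factorization applied at step $j-1$, $h_{j,j-1} = \norm*{\Om r_{j-1}}$; substituting the already-established expression $r_{j-1} = \hat{p}_{j-1}(\A)\vinit / \rho_{j-1,j-1}$ yields $h_{j,j-1} = \norm*{\Om \hat{p}_{j-1}(\A)\vinit}/\rho_{j-1,j-1}$. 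Comparing these expressions produces $\rho_{jj} = \norm*{\Om \hat{p}_{j-1}(\A) \vinit}$, giving the desired formula for $r_j$. The polynomial optimality claim follows immediately from Theorem~\ref{th:charac-poly-min}, since $\hat{p}_j$ is the characteristic polynomial of $H_j$.

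The main obstacle is the bookkeeping of the $\rho_{jj}$ chain: the identification of $\rho_{jj}$ depends on the factorization at step $j-1$, so the argument is implicitly inductive and requires applying the same Krylov-companion construction at every intermediate size. The initialization is straightforward since $\rho_{11} = \norm*{\Om \vinit} = 1$ corresponds to the degree-$0$ monic polynomial, matching the convention that the stated formula reduces to $r_1 = \hat{p}_1(\A)\vinit$. Verifying that the hypotheses of Theorem~\ref{th:r-implicit-Q}, in particular unreducedness of $H_j$ and the $\epsilon$-embedding property, hold uniformly across the sequence is also something to check carefully, but both are inherited directly from the hypotheses of the theorem.
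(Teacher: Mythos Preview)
Your proposal is correct and follows essentially the same route as the paper: you reproduce the Krylov--companion construction from the discussion preceding the theorem, pass to an $\Om$-orthonormal form via the randomized QR of $K_j$, invoke the randomized implicit Q theorem to identify $(Q_j,G_j,f_j)$ with $(V_j,H_j,r_j)$, and then read off $\rho_{jj}$ by computing the subdiagonal entry of $G_j$ in two ways. The only cosmetic difference is that the paper carries out the calculation at the full size $\sa$ and reads off $h_{j+1,j}=\rho_{j+1,j+1}/\rho_{jj}$ directly from $RFR^{-1}$ (using the nesting of the QR factors), whereas you shift the index to $h_{j,j-1}$ and phrase the bookkeeping as an induction; both yield $\rho_{jj}=\norm*{\Om\hat p_{j-1}(\A)\vinit}$.
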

\begin{proof}
    The discussion above gives $r_j = f = t/\rho_{jj} = \frac{\hat{p}_{j}(A)v_1}{\norm{\Om \hat{p}_{j-1}(A)v_1}}$, notably for any $j \leq \sa$ where $\sa$ is given. 
\end{proof}
Note that since $r_j$ was in $\mathcal{K}_{j+1}(\A,\vinit)$, it could be already written as $r_j = p(\A) \vinit$  for some polynomial $p$ of degree $j$. The above theorem shows that $p$ is the characteristic polynomial of $H_j$, which satisfies the optimally property of Theorem~\ref{th:charac-poly-min}. 

We now present a convergence result in the general non-symmetric case that holds when the shifts are no longer exact but fixed for all the outer iterations of rIRA. Although this shifting strategy is not used in practice since less interesting than the exact one, the following theorem gives insights on the behaviour of the method when close to convergence, a moment where exact shifts can be considered as fixed shifts, as emphasized by Sorensen in \cite[Theorem 5.1]{Sorensen1992ImplicitApplicationPolynomial}.
\begin{theorem}
    \label{th:rIRAconv}
    Let $\nshi$ shifts ${\shi_1,\dots,\shi_p}$ define $\polp(\lambda) = \prod_{i=1}^{\sa}(\lambda - \shi_i)$. Assume that several rIRA iterations are performed on $\A$, indexed by $i$ and using an OSE $\Om$ which embeds every subspace of dimension up to $\ba$ with high probability as in Definition~\ref{def:OSE}. Note $\vinit^{(i=0)} \coloneqq \vinit$ and 
    $$ \vinit^{(i)} = \polp(\A) \vinit^{(i-1)} / \norm*{\Om \polp(\A) \vinit^{(i-1)} }$$ 
    the starting vectors of $\Om$-norm 1, using Equation~\eqref{eq:vinitj-polpA}. The spectrum of a matrix $\A \in \Rnm{n}{n}$ is divided as $\Lambda(\A) = \{\eival_1, \dots, \eival_{\sa} \} \cup \{\eival_{\sa+1}, \dots, \eival_{n} \}$ where $\{\eival_1, \dots, \eival_{\sa} \}$ are the eigenvalues of interest.
    If $\pi_{i} = h_{2,1}^{(i)} \dots h_{\sa+1, \sa}^{(i)}$ is the product of the subdiagonal elements of $\Hsa$ at the $i$'th rIRA iteration, that 
    $$ \abs*{\polp(\eival_1)} \geq \dots \geq \abs*{\polp(\eival_k)} > \abs*{\polp(\eival_{k+1})} \geq \dots \geq \abs*{\polp(\eival_n)} $$
    with
    $$ \gamma = \abs*{\polp(\eival_k)} / \abs*{\polp(\eival_{k+1})} < 1 $$
    and that $v_1$ is not in an invariant subspace tied to ${\lambda_{k+1}} \dots \lambda_{n}$,
    then the sequence $\{\pi_{i}\}$ converges to $0$ and there exists a constant $K$ and a positive integer $I$ such that for $i > I$,
    \begin{equation}
      0 \leq \pi_{i} \leq \sqrt{ \frac{1+\epsilon}{1-\epsilon} }\gamma^{i} K.  
    \end{equation}
\end{theorem}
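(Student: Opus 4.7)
The plan is to mirror Sorensen's deterministic argument in \cite[Theorem~5.1]{Sorensen1992ImplicitApplicationPolynomial}, with a single $\sqrt{(1+\epsilon)/(1-\epsilon)}$ loss incurred at the step where one passes between the $\Om$-norm and the Euclidean norm. From the analysis preceding Theorem~\ref{th:rsa-vinit}, the subdiagonals of the inner Hessenberg at outer iteration $i$ satisfy $h_{j+1,j}^{(i)} = \|\Om\, \hat{p}_j^{(i)}(\A)\, \vinit^{(i)}\| \,/\, \|\Om\, \hat{p}_{j-1}^{(i)}(\A)\, \vinit^{(i)}\|$, where $\hat{p}_j^{(i)}$ is the characteristic polynomial of the leading $j\times j$ block of $\Hsa^{(i)}$. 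Telescoping over $j=1,\dots,\sa$ and using $\|\Om \vinit^{(i)}\| = 1$ collapses the product to $\pi_i = \|\Om\, \hat{p}_{\sa}^{(i)}(\A)\, \vinit^{(i)}\|$. By the optimality in Theorem~\ref{th:charac-poly-min}, $\hat{p}_{\sa}^{(i)}$ minimizes this quantity over monic polynomials of degree $\sa$, so I would take as comparison the polynomial $q(\lambda) = \prod_{j=1}^{\sa}(\lambda - \eival_j)$ whose roots are the wanted eigenvalues to obtain
\begin{equation*}
\pi_i \leq \|\Om\, q(\A)\, \vinit^{(i)}\|.
\end{equation*}

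Next, I would unfold the recursion for the starting vector. Iterating Equation~\eqref{eq:vinitj-polpA} and using the $\Om$-normalization at every outer iteration gives, up to sign, $\vinit^{(i)} = \polp(\A)^i \vinit^{(0)} / \|\Om\, \polp(\A)^i \vinit^{(0)}\|$, and hence
\begin{equation*}
\pi_i \leq \frac{\|\Om\, q(\A)\, \polp(\A)^i\, \vinit^{(0)}\|}{\|\Om\, \polp(\A)^i\, \vinit^{(0)}\|}.
\end{equation*}
Because $\Om$ is an OSE for subspaces of dimension up to $\ba$, with high probability it is an $\epsilon$-embedding for the (at most) two-dimensional subspace spanned by the two vectors appearing in this ratio; a union bound handles the finitely many outer iterations considered. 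Applying \eqref{eq:epsembedd} to the numerator and denominator separately strips $\Om$ at the cost of a $\sqrt{(1+\epsilon)/(1-\epsilon)}$ factor and reduces the problem to controlling the deterministic ratio $\|q(\A) \polp(\A)^i \vinit^{(0)}\| / \|\polp(\A)^i \vinit^{(0)}\|$.

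The remaining ratio is analyzed in an eigenbasis of $\A$. Assuming diagonalizability and writing $\vinit^{(0)} = \sum_j \alpha_j \eivec_j$, the polynomial $q$ annihilates the components with $j \leq \sa$, so $q(\A)\polp(\A)^i \vinit^{(0)} = \sum_{j>\sa} \alpha_j q(\eival_j) \polp(\eival_j)^i \eivec_j$ has norm at most $C_1 |\polp(\eival_{\sa+1})|^i$. For the denominator, the hypothesis that $\vinit^{(0)}$ lies outside the invariant subspace associated with $\{\eival_{\sa+1},\dots,\eival_n\}$ guarantees that some $\alpha_j \neq 0$ with $j \leq \sa$; combined with the strict gap $|\polp(\eival_{\sa})| > |\polp(\eival_{\sa+1})|$, this yields a lower bound $\|\polp(\A)^i \vinit^{(0)}\| \geq C_2 |\polp(\eival_{\sa})|^i$ for $i$ large enough that the dominant wanted term has emerged from the sum. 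Setting $K = C_1/C_2$ and $\gamma = |\polp(\eival_{\sa+1})|/|\polp(\eival_{\sa})| < 1$ yields the announced bound for $i > I$. The main obstacle is this denominator lower bound: when the eigenbasis is ill-conditioned (as is typical for non-symmetric $\A$), obtaining a clean constant $C_2$ requires carefully tracking how the wanted components of $\vinit^{(0)}$ survive the repeated filtering by $\polp(\A)$; for non-diagonalizable $\A$ the same argument applies through the Jordan form, producing extra polynomial-in-$i$ factors that are dominated by the geometric gap and absorbed into $K$ for $i > I$.
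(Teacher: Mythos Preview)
Your proposal is correct and follows essentially the same route as the paper: telescope the subdiagonals via Theorem~\ref{th:rsa-vinit} to get $\pi_i = \|\Om\,\hat{p}_\sa^{(i)}(\A)\,\vinit^{(i)}\|$, invoke the optimality of Theorem~\ref{th:charac-poly-min} with the monic comparison polynomial $q$ whose roots are the wanted eigenvalues (the paper phrases this as the characteristic polynomial of $R_1$ in a Schur decomposition, which is the same polynomial), unfold the recursion for $\vinit^{(i)}$, and strip $\Om$ via the embedding property at the cost of $\sqrt{(1+\epsilon)/(1-\epsilon)}$. The only difference is that the paper then simply invokes \cite[Theorem~5.1]{Sorensen1992ImplicitApplicationPolynomial} for the remaining deterministic ratio, whereas you sketch that argument explicitly through the eigenbasis; your version is a faithful expansion of what Sorensen does, and you also catch that the intended ratio is $\gamma = |\polp(\eival_{\sa+1})|/|\polp(\eival_\sa)|$.
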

\begin{proof}
    Suppose
    $\A (Q_1, Q_2) = (Q_1, Q_2) \begin{pmatrix}
        R_1 & M \\
        0 & R_1
    \end{pmatrix}$
    is a Schur decomposition of $\A$ with $\Lambda(R_1) = \{\eival_1, \dots, \eival_{\sa} \}$.  
    Using the expression of $\vinit^{(i)}$ inductively, we obtain 
    $$ \vinit^{(i)} = \polp^i (\A) \vinit / \norm*{\Om \polp^i (\A) \vinit}. $$
    Now from Theorem~\ref{th:rsa-vinit} we have that 
    $$h_{j,j+1}^{(i)} = \frac{\norm*{\Om \hat{p}_{j}(A) \vinit^{(i)}}}{\norm*{\Om \hat{p}_{j-1}(A) \vinit^{(i)}}},$$ 
    hence in the product $\pi_{i} = h_{2,1}^{(i)} \dots h_{\sa+1, \sa}^{(i)}$ many terms get cancelled, resulting in 
    $$\pi_{i} = \norm*{\Om \hat{p}_{\sa}(A) \vinit^{(i)}} / \norm*{\Om \vinit^{(i)}} = \norm*{\Om \hat{p}_{\sa}(A) \vinit^{(i)}} = \min_{p, \deg(p) \leq \sa} \norm*{\Om p(\A) \vinit^{(i)}},$$
    where the last equality comes from Theorem~\ref{th:charac-poly-min}.  Letting $q$ be the characteristic polynomial of $R_1$, which has a degree less or equal to $\sa$, we obtain
    $$\pi_{i} \leq  \norm*{\Om q(\A) \vinit^{(i)}} = \norm*{\Om q(\A)  \hspace{0.2cm} \polp^i (\A) \vinit / \norm*{\Om \polp^i (\A) \vinit}  \hspace{0.2cm} } \implies \pi_{i} \cdot \norm*{\Om \polp^i (\A) \vinit} \leq \norm*{\Om q(\A) \polp^i (\A) \vinit }.$$
The $\epsilon$-embedding property can be applied on both sides of this inequality at any iteration $i$, since $\Om$ is an OSE, hence it is an $\epsilon$-embedding for every subspaces of dimension at most $\ba$ w.h.p.. We have $\polp^i (\A) \vinit = \alpha \vinit^{(i)} \in \K(\A, \vinit^{(i)})$, which is of dimension $\sa$, and similarly $q(\A) \polp^i (\A) \vinit = q(\A) \alpha \vinit^{(i)} \in \mathcal{K}_{\sa+1} (\A, \vinit^{(i)})$, which is of dimension $\sa +1$. It results that
    $$ \pi_{i} \cdot \sqrt{1-\epsilon} \norm*{\polp^i (\A) \vinit} \leq \sqrt{1+\epsilon} \norm*{ q(\A) \polp^i (\A) \vinit }.$$ 
    Defining $\pi_{i}^{\Om} \coloneqq \sqrt{\frac{1-\epsilon}{1+\epsilon}} \pi_{i}$, we obtain  
    $$\pi_{i}^{\Om} \cdot \norm*{\polp^i (\A) \vinit} \leq  \norm*{ q(\A) \polp^i (\A) \vinit }. $$
    By doing so, we separated randomized and deterministic quantities, since $q$ and $\polp$ are fixed and independent of the method IRA or rIRA. It was shown in \cite[Theorem 5.1]{Sorensen1992ImplicitApplicationPolynomial} the existence of an integer $I$ and a positive constant $K$ such that $i > I$ implies $\pi_{i}^{\Om} \leq \gamma^{i} K$, where the norm of $\vinit$ only affects the constant values, and this concludes the proof. 
\end{proof}
This convergence result can be interpreted as having the product $\pi_i$ of the positive sub-diagonal elements of $\Hsa$ converging to zero, so that some $ h_{j+1,j} \rightarrow 0$ meaning that eventually $r_j = 0$ in the \rArno{} algorithm for $j \leq \sa$. Since $r_j = h_{j+1,j} v_{j+1}$, this results in $\V_j$ being an invariant subspace for $\A$.

\section{Numerical results}
\label{sec:numericals}
In this section we study the numerical efficiency of 
rIRA for solving non-symmetric eigenvalue problems. The experiments are conducted using Julia. 
Most of the matrices used are \href{https://sparse.tamu.edu/Grund/poli4}{from the SuiteSparse Matrix Collection} and are summarized in Table~\ref{tab:testMatrices}.

\begin{table}
    \centering
    \begin{tabular}{|l|l|l|l| }
        \hline
        \textbf{Name} & \textbf{Size n} & \textbf{Nonzeros} & \textbf{Problem} \\
        \hline
        \href{https://sparse.tamu.edu/Grund/poli4}{poli4} & $33,833$ & $73,249$ & Computational Fluid Dynamics \\
        \hline
        \href{https://sparse.tamu.edu/Norris/lung2}{lung2} & $109,460$ & $492,564$ & Computational Fluid Dynamics \\
        \hline
        \href{https://sparse.tamu.edu/CEMW/tmt_unsym}{tmt\_unsym} & $917,825$ & $4,584,801$ & Electromagnetic Problem \\
        \hline
        \href{https://sparse.tamu.edu/VLSI/vas_stokes_1M}{Vas\_stokes\_1M} & $1,090,664$ & $34,767,207$ & Semiconductor Process Problem \\
        \hline
        \href{https://sparse.tamu.edu/Hamrle/Hamrle3}{Hamrle3} & $1,447,360$ & $5,514,242$ & Circuit Simulation Problem \\
        \hline
        \href{https://sparse.tamu.edu/Bourchtein/atmosmodl}{atmosmodl} & $1,489,752$ & $10,319,760$ & Computational Fluid Dynamics \\
        \hline
        \href{https://sparse.tamu.edu/Janna/ML_Geer}{ML\_Geer} & $1,504,002$ & $110,686,677$ & Structural Problem  \\
        \hline
    \end{tabular}
    \caption{Test matrices for rIRA}
    \label{tab:testMatrices}
\end{table}

\subsection{Computation of a set of eigenvalues of interest}
We discuss first the spectrum selection property of rIRA and consider a simple case, a non-symmetric matrix of dimensions $800 \times 800$ whose spectrum is $\Lambda = \{1,2,3,\dots,800\}$. We seek $\sa = 10$ eigenvalues of interest, namely those of largest modulus (LM) and smallest modulus (SM), in a subspace of dimension $m \coloneqq \ba = 50$. In all this section, the sketching size is  $\dsk = 4m$ which here gives $\dsk= 200$. An iteration is defined as an outer iteration of rIRA, i.e.\@ Arnoldi extension, shifted QR steps, and truncation, or equivalently Line~\ref{alg:rIRA-arnoldi-extend} to Line~\ref{alg:rIRA-truncate} of Algorithm~\ref{alg:rIRA}. Figure~\ref{fig:diagLM} and Figure~\ref{fig:diagSM} display the residual norms $\norm*{\A \rivec - \rival \rivec} / \norm*{\rivec}$ reached during the iterations for each pair of interest. 
Note that these residual norms correspond to the central term in the inequalities of Equation~\eqref{eq:r-residualNorm-free}. They are too costly to calculate in a real application, but are used in this section to validate the numerical efficiency of rIRA.  However, convergence is defined using the sketched residual error, i.e.\@ the quantity $\norm*{\Om(\A \rivec - \rival \rivec)} = h_{\sa+1,\sa} \abs{e_{\sa}^T \yrivec}$ from Equation~\eqref{eq:r-residualNorm-free}, which is available for no extra cost. The algorithm stops when the maximum of these sketched residual errors over all the sought eigenpairs reaches a threshold $\eta$ defined in this experiment as $\eta = 10^{-8}$. The maximum is represented in the figure by a dotted line. 
We observed in practice that a faster convergence is obtained if in addition to the $\sa$ wanted eigenpairs,  we compute $4$ supplementary eigenpairs while maintaining a Krylov subspace of dimension $m = 50$. Indeed, the eigenvalues farthest from the LM or SM targeted eigenpairs tend to converge the slowest and worsen the overall behavior. This is also noted in \cite{Sorensen1992ImplicitApplicationPolynomial} as an \emph{ad hoc stopping rule}. Figures~\ref{fig:eivaldiagLM} and~\ref{fig:eivaldiagSM} display all the $\sa + 4$ approximated eigenvalues at each iteration. It can be seen that convergence is achieved after a few iterations for the LM and SM cases and that the computed eigenvalues approximate well the prescribed eigenvalues of $\A$.
\begin{figure}
    \centering
    \begin{subfigure}[b]{0.45\textwidth}
        \centering
        \includegraphics[width=\textwidth]{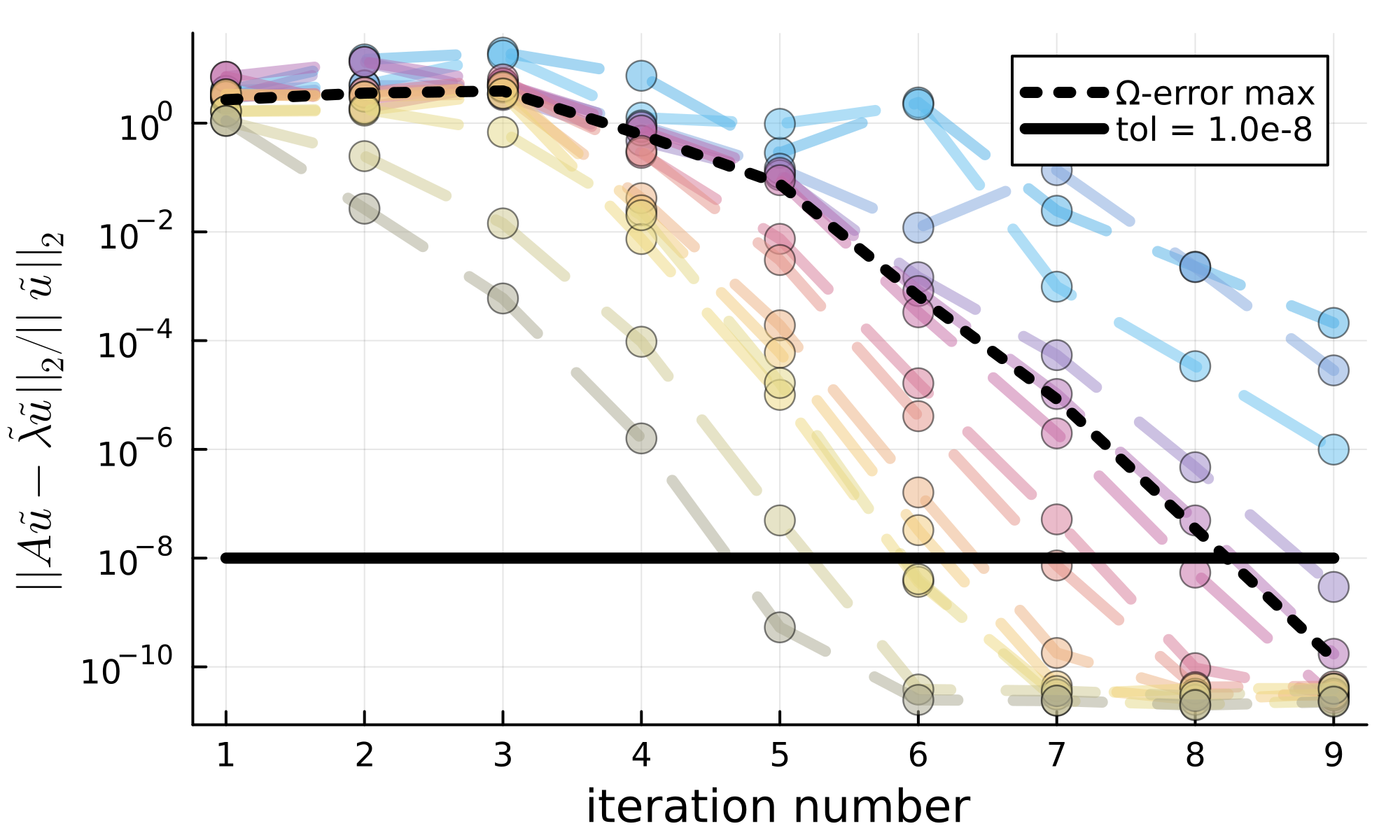}
        \caption{Residual norms largest modulus (LM)}
        \label{fig:diagLM}
    \end{subfigure}
    \hfill
    \begin{subfigure}[b]{0.45\textwidth}
        \centering
        \includegraphics[width=\textwidth]{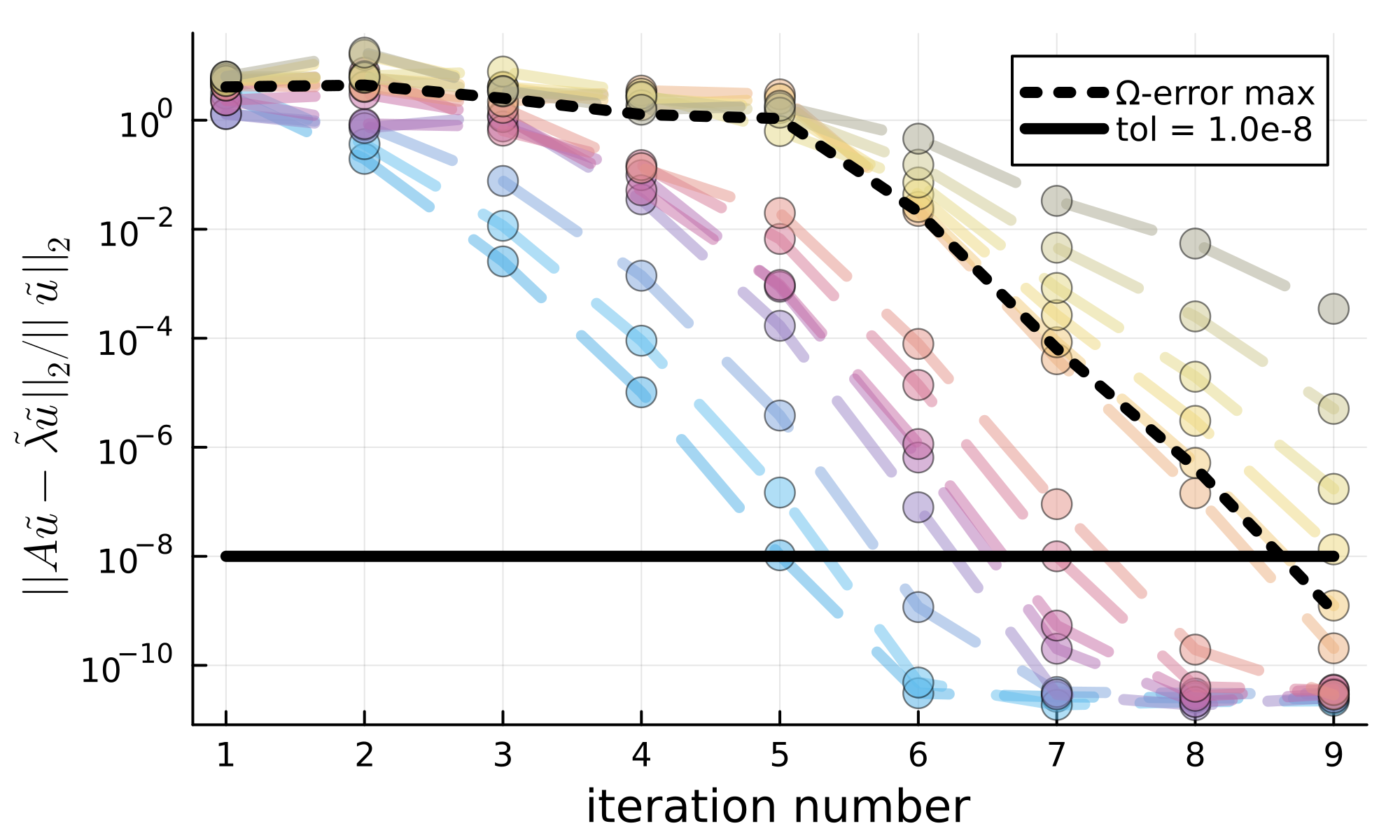}
        \caption{Residual norms smallest modulus (SM)}
        \label{fig:diagSM}
    \end{subfigure}
    \begin{subfigure}[b]{0.45\textwidth}
        \centering
        \includegraphics[width=\textwidth]{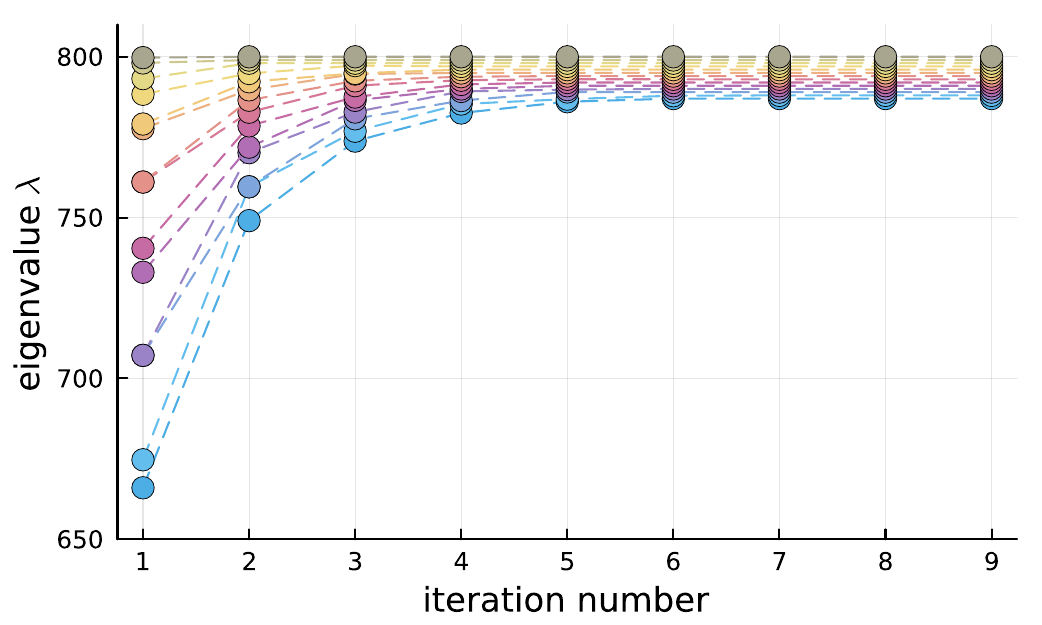}
        \caption{Eigenvalues largest modulus (LM)}
        \label{fig:eivaldiagLM}
    \end{subfigure}
    \hfill
    \begin{subfigure}[b]{0.45\textwidth}
        \centering
        \includegraphics[width=\textwidth]{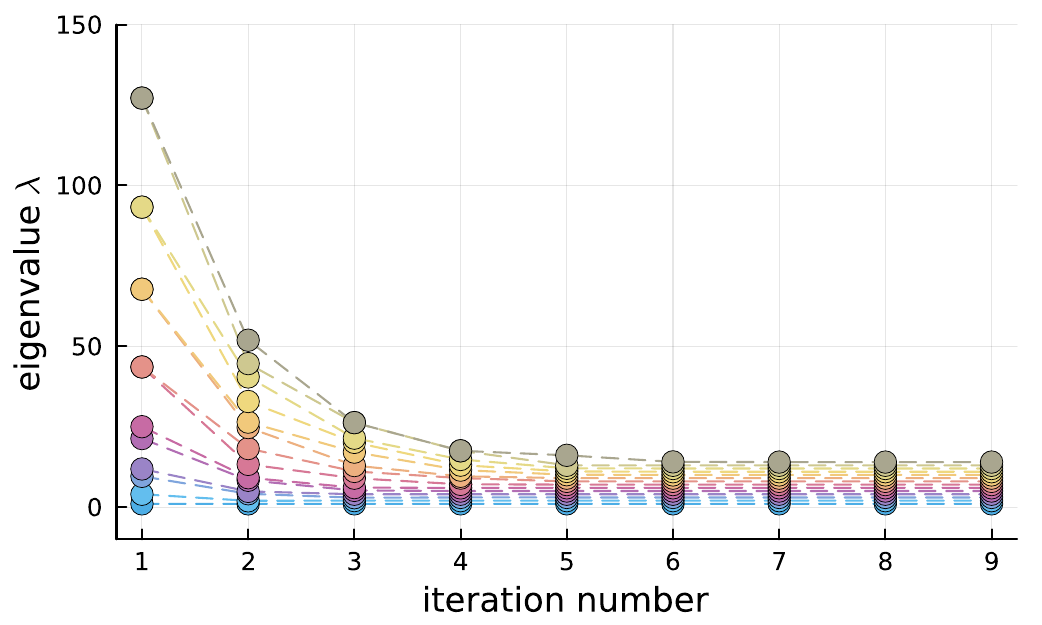}
        \caption{Eigenvalues smallest modulus (SM)}
        \label{fig:eivaldiagSM}
    \end{subfigure}
    \caption{rIRA to compute $\sa = 10$ Ritz pairs for the non-symmetric $800\times800$ toy matrix $\A$ with spectrum $\Lambda_{\A} = \{1,2,3,\dots,800\}$ and for two parts of the spectrum, namely largest and smallest modulus. \ref{fig:diagLM} and~\ref{fig:diagSM} : residual norms $\norm*{\A \rivec_i - \rival_i \rivec_i} / \norm*{ \rivec_i}$ along the iterations,  \ref{fig:eivaldiagLM} and~\ref{fig:eivaldiagSM} : eigenvalues $\rival_i$ along the iterations.}
    \label{fig:diag}
\end{figure}

\subsection{Orthogonalization process}
We continue by studying the condition number of the basis produced by the different orthogonalization processes introduced in section~\ref{sec:orthog}, notably RGS that solves the least squares problem in line~\ref{alg:sketch-ortho-LS} of Algorithm~\ref{alg:sketch-orthonormal} by computing the QR factorization of $(\Om \V)$ and rCGS / rCGS2 that considers that $\Om \V$ is orthogonal and solves the least squares problem using the transpose $(\Om \V)^T$. The condition number $\kappa(\V)$ is computed during the inner iterations of one Arnoldi factorization of dimension $\ba = 200$ for \href{https://sparse.tamu.edu/Grund/poli4}{poli4}. The results are presented in Figure~\ref{fig:Conds-hard}, where the condition number of the basis obtained using CGS and CGS2 is also displayed. For this example, both CGS and rCGS are not stable, after 50 iterations the condition number of the basis increases to $10^{18}$. Note that using the weak version of rCGS2 that relies on a cheaper reorthogonalization step (for more details see section~\ref{sec:orthog}) is not sufficient to produce a well conditioned basis.  As shown in more details in Figure~\ref{fig:Conds-hard-zoom}, rCGS2 and RGS maintain a condition number $\kappa(V) \approx 3$, which is to be expected from stable sketched methods with $\epsilon = 1/2$, given Corollary~\ref{cor:rando_sval}. 
Figures~\ref{fig:poli4-evp-RGS} and~\ref{fig:poli4-evp-rCGS2} display the $\sa = 100$ eigenvalues of largest modulus obtained using rIRA with either RGS or rCGS2 for each inner Arnoldi iteration. The results obtained using the eigs method are used as a reference, where eigs is an implementation of the IRA method for general non-symmetric matrices, see \href{https://arpack.julialinearalgebra.org/latest/index.html#Arpack.eigs-Tuple{Any}}{Julia's eigs documentation}. In this situation, it should be noted that both processes produce similar results for a tolerance of $\norm*{\A \rivec - \rival \rivec} / \norm{\rivec} \leq 10^{-8}$ for each approximated eigenpairs, and this tolerance was reached after two outer rIRA iterations. The fact that $\VOm \coloneqq \Om \V$ obtained from rCGS2 is better conditioned than the other randomized methods did not lead to faster convergence in the eigenvalue solver in our experiments so far. The unstable orthogonalization processes did not lead to convergence in the eigenvalue solver. 

\begin{figure}
    \begin{subfigure}[b]{0.45\textwidth}
        \centering
        \includegraphics[width =\textwidth]{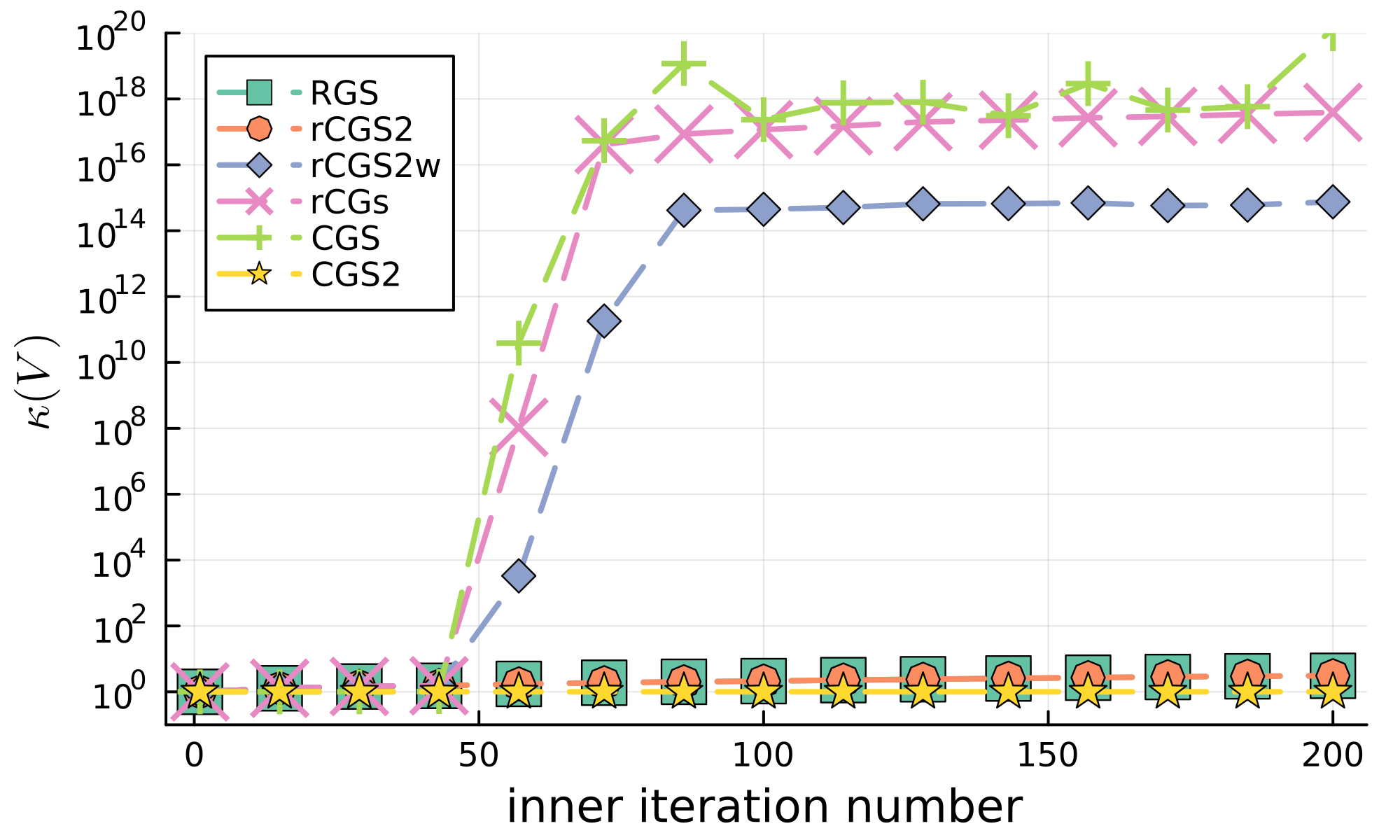}
        \caption{Condition number of $\V \in \Rnm{n}{(\# \text{ of inner iteration})}$}
        \label{fig:Conds-hard}
    \end{subfigure}
    \hfill
    \begin{subfigure}[b]{0.45\textwidth}
        \centering
        \includegraphics[width =\textwidth]{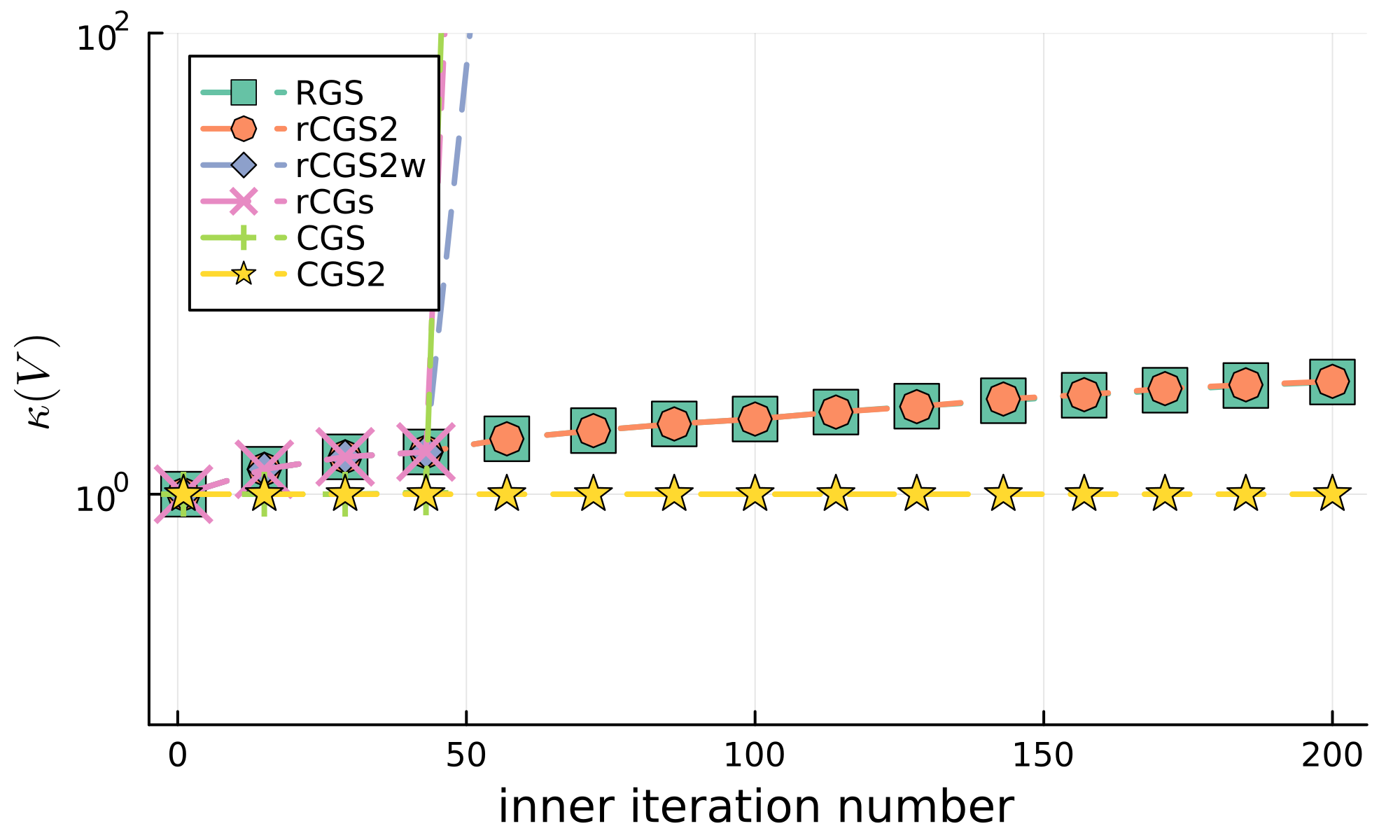}
        \caption{Zoom of Figure~\ref{fig:Conds-hard} on stable processes}
        \label{fig:Conds-hard-zoom}
    \end{subfigure}
    \hfill
    \begin{subfigure}[b]{0.45\textwidth}
        \centering
        \includegraphics[width =\textwidth]{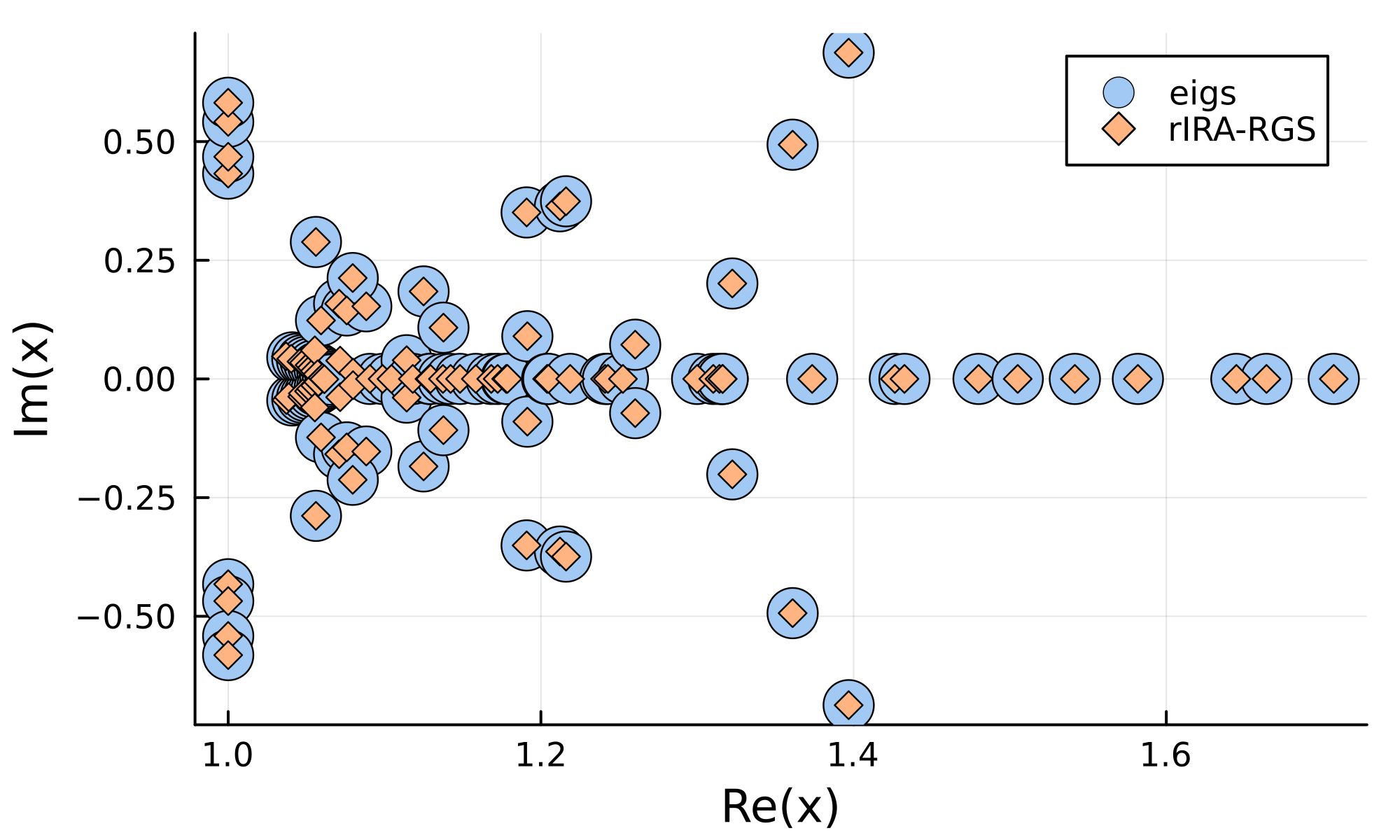}
        \caption{Eigenvalues using RGS for rIRA}
        \label{fig:poli4-evp-RGS}
    \end{subfigure}
    \hfill
    \begin{subfigure}[b]{0.45\textwidth}
        \centering
        \includegraphics[width =\textwidth]{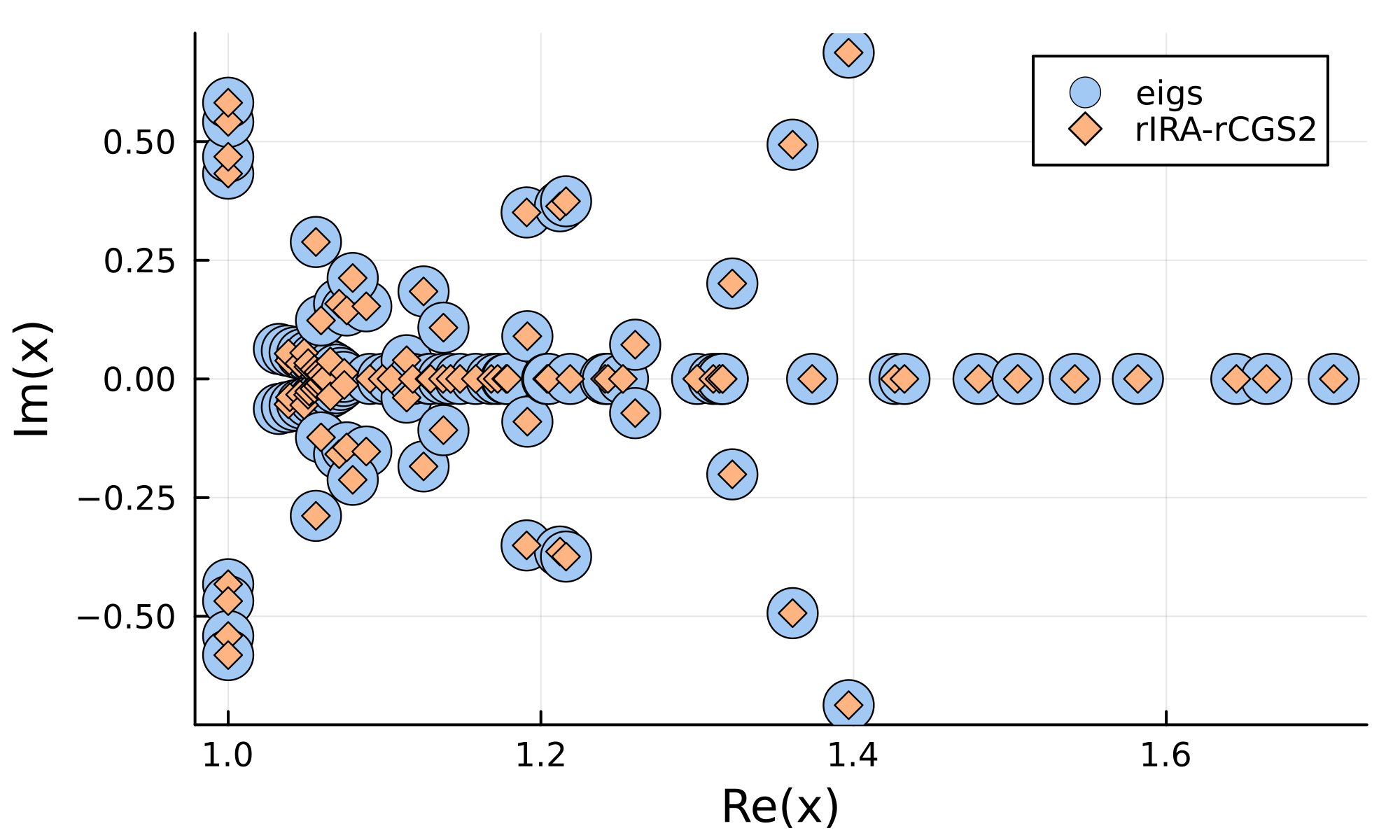}
        \caption{Eigenvalues using rCGS2 for rIRA}
        \label{fig:poli4-evp-rCGS2}
    \end{subfigure}
    \hfill
    \caption{Stability of orthogonalization processes and resulting eigenvalues for RGS and rCGS2 with $\A$ = poli4, $\ba$ = 200 and $\sa = 100$}
    \label{fig:Conds}
\end{figure}

\subsection{Comparison to other eigensolvers}
We compare in this section rIRA with two other eigensolvers, Julia's eigs solver and a randomized eigensolver introduced in \cite{Balabanov2021RandomizedblockGram}. We start with a comparison with Julia's eigs, an implementation of the IRA method for general non-symmetric matrices, see \href{https://arpack.julialinearalgebra.org/latest/index.html#Arpack.eigs-Tuple{Any}}{Julia's eigs documentation} and the ARPACK user guide \cite{Lehoucq1998ARPACKusersguide}. Figure~\ref{fig:Hamrle} shows the eigenpairs convergence and the eigenvalues for the non-symmetric matrix \href{https://sparse.tamu.edu/Hamrle/Hamrle3}{Hamrle3}. A total of $\sa = 200$ eigenpairs is sought in a Krylov subspace of dimension $\ba = 500$ and the tolerance is $\eta = 10^{-6}$. For the sake of clarity, only the maximum and minimum residual norms $\norm*{\A \rivec - \rival \rivec} / \norm*{\rivec}$ are displayed. The closeness between the maximum residual norm and its sketch shows that the $\epsilon$-embedding of the residual is maintained throughout the iterations. Note that the eigenvalues computed by rIRA correspond to the result of eigs. This is a rather difficult case, given that LM's eigenvalues are not well separated, as they all lie in the circle centered around 0 with a radius of 2 and therefore they have similar moduli. 

Another interesting example is the computation of the SM eigenvalues for the matrix \href{https://sparse.tamu.edu/VLSI/vas_stokes_1M}{vas\_stokes\_1M}. It is well known that using shift and invert with $\hat{\A} \coloneqq (\A - \sigma I)^{-1}$ for some $\sigma \approx \eival_{max}$ is a relevant method for computing SM eigenvalues, since it  benefits from the better convergence of eigensolvers when seeking LM eigenvalues, see \cite{Parlett1987Complexshiftinvert} for example. However, for large matrices it might not be feasible to compute their LU decomposition to invert the linear system. In this case, rIRA's capability of focusing on a specific part of the spectrum can be exploited for computing SM eigenvalues. The $\sa = 50$ SM eigenvalues of vas\_stokes\_1M and their convergence are shown in Figure~\ref{fig:vas} for a Krylov subspace of dimension $\ba = 100$. The tolerance of $\eta = 10^{-10}$ is reached after more than 800 iterations. However, a standard eigenvalue solver that uses only preconditioning to recover SM eigenvalues might not converge, see the comparison with randeigs below. 

\begin{figure}
    \begin{subfigure}[b]{0.45\textwidth}
        \centering
        \includegraphics[width =\textwidth]{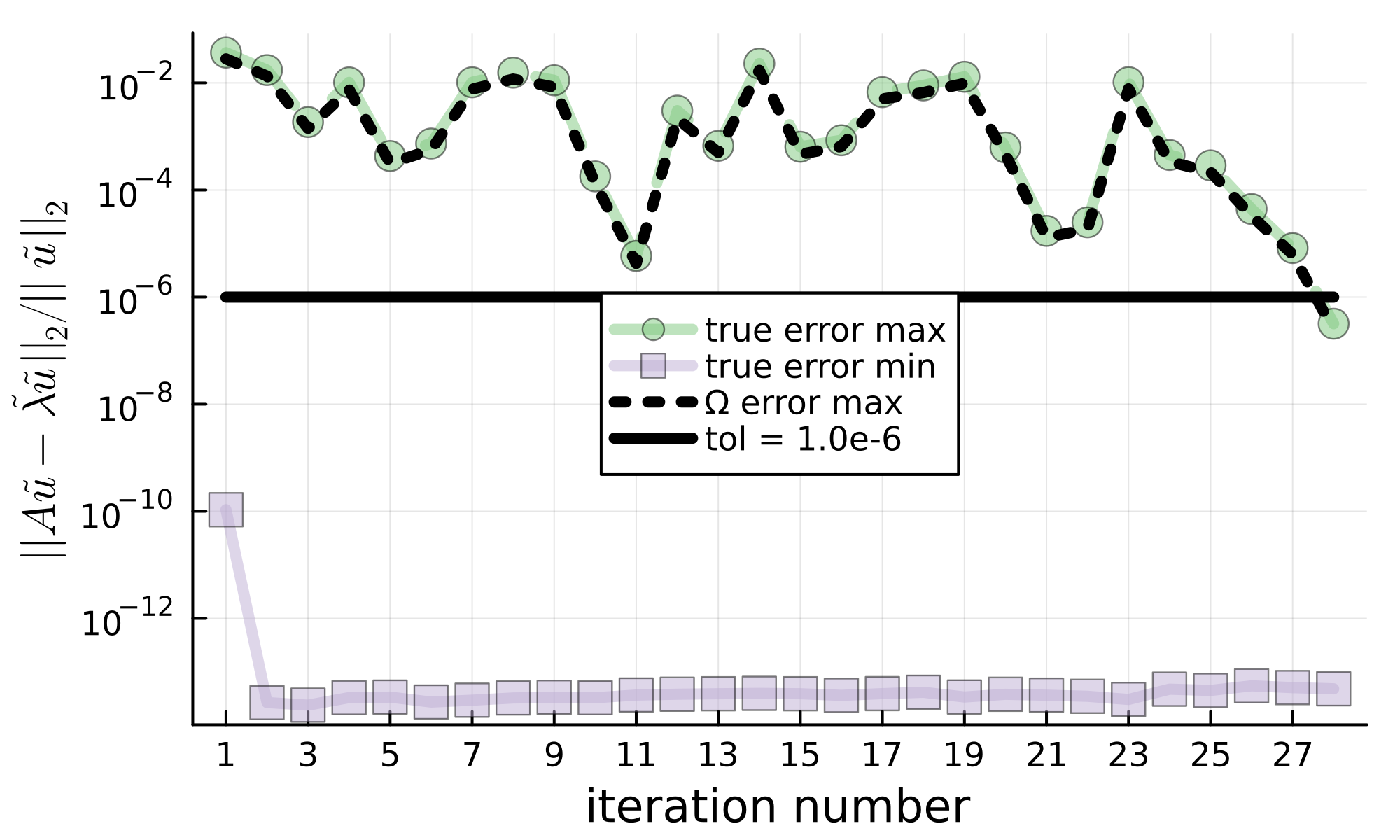}
        \caption{residual errors}
        \label{fig:hamrle-res}
    \end{subfigure}
    \hfill
    \begin{subfigure}[b]{0.45\textwidth}
        \centering
        \includegraphics[width =\textwidth]{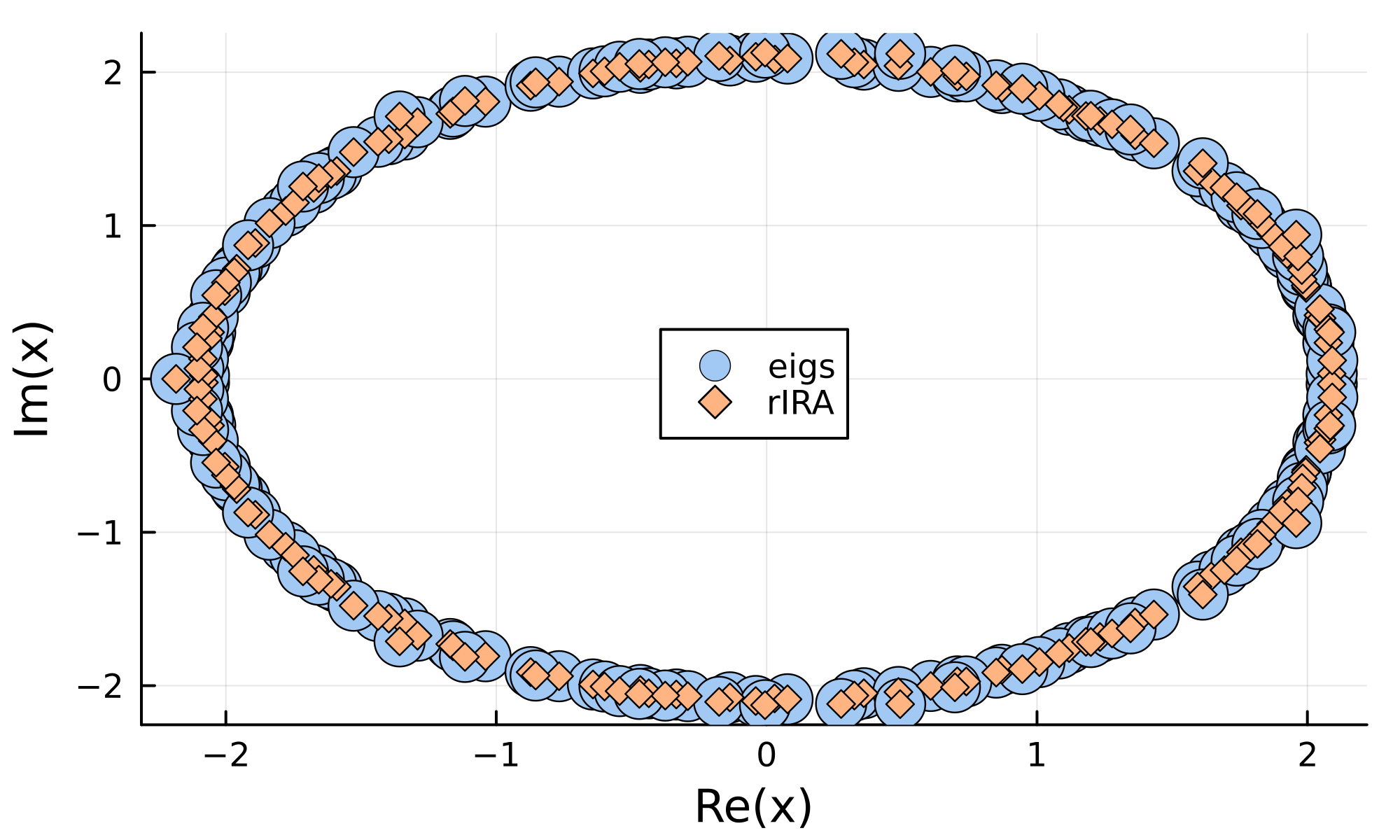}
        \caption{eigenvalues}
        \label{fig:hamrle-evp}
    \end{subfigure}
    \caption{$\A$ = Hamrle3 of size $n \approx 1.4 \times 10^6$, $\sa = 200$ LM eigenpairs computed in a Krylov dimension $\ba = 500$ }
    \label{fig:Hamrle}
\end{figure}

\begin{figure}
    \begin{subfigure}[b]{0.45\textwidth}
        \centering
        \includegraphics[width =\textwidth]{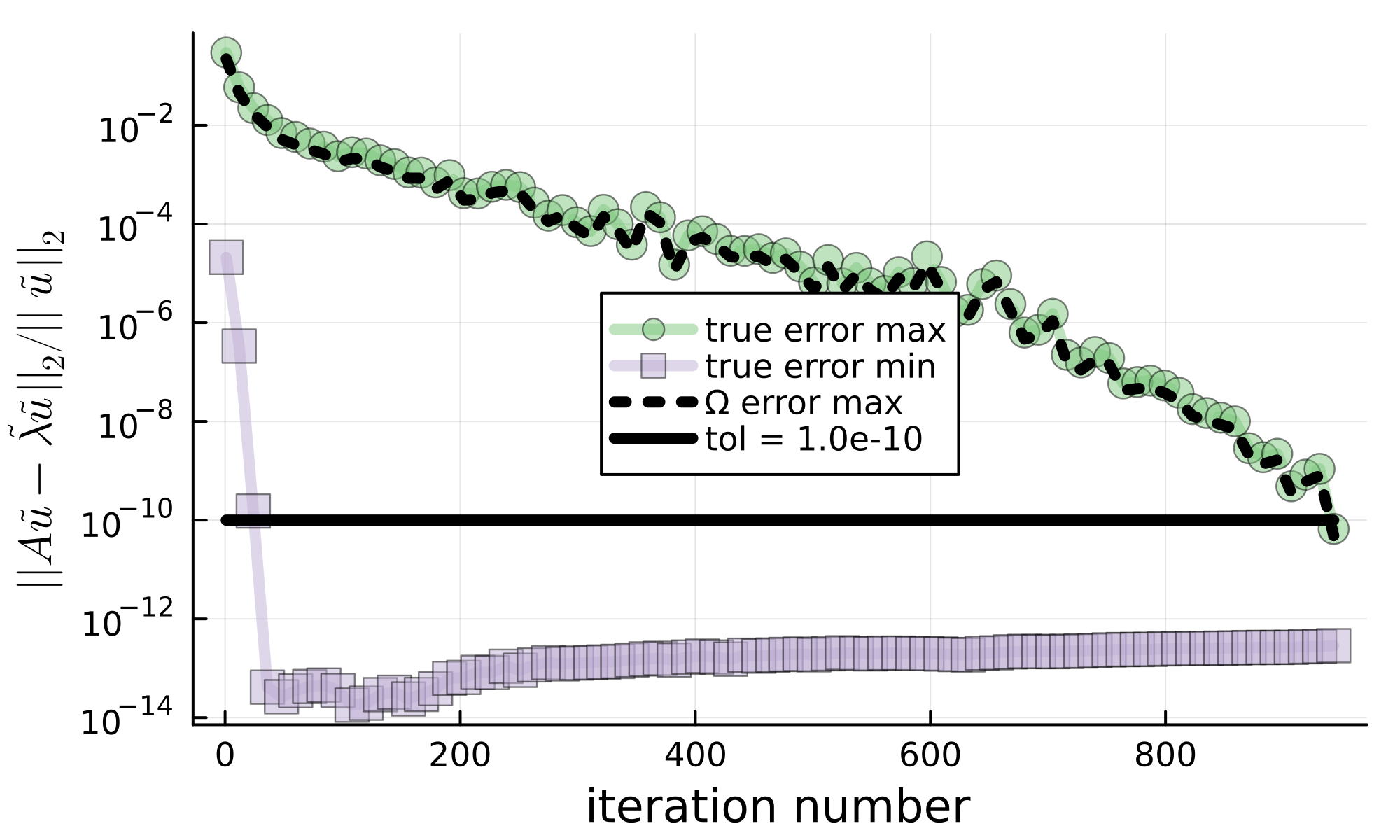}
        \caption{residual errors}
        \label{fig:vas-res}
    \end{subfigure}
    \hfill
    \begin{subfigure}[b]{0.45\textwidth}
        \centering
        \includegraphics[width =\textwidth]{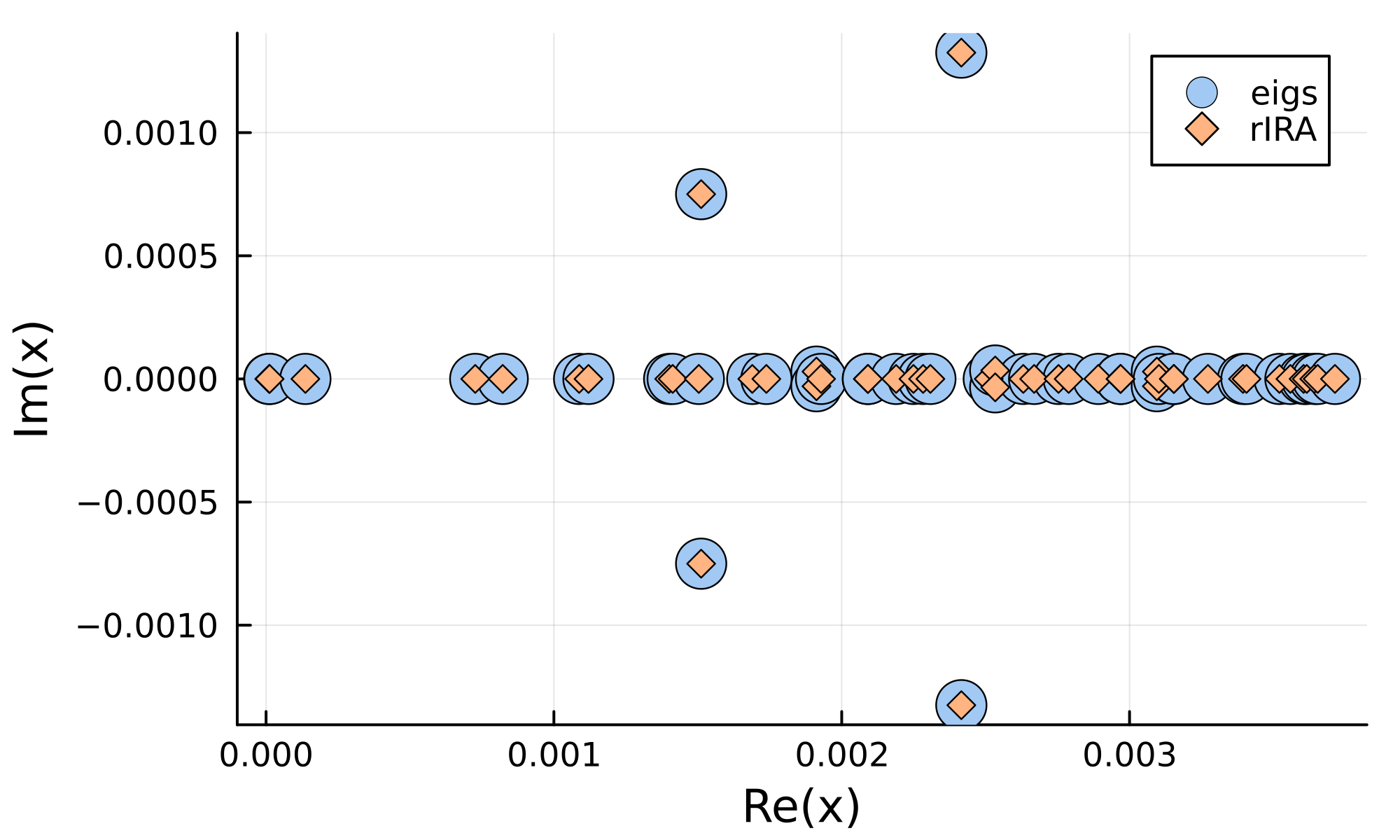}
        \caption{eigenvalues}
        \label{fig:vas-evp}
    \end{subfigure}
    \caption{$\A$ = vas\_stokes\_1M of size $n \approx 1.1 \times 10^6$, $\sa = 50$ SM eigenpairs computed in a Krylov dimension $\ba = 100$ }
    \label{fig:vas}
\end{figure}

Table~\ref{tab:eigscomp} displays results obtained for various matrices and includes runtimes obtained on a node with 2x Cascade Lake Intel Xeon 5218 16 cores, 2.4GHz processor and 192GB of RAM, as specified in \href{https://paris-cluster-2019.gitlabpages.inria.fr/cleps/cleps-userguide/architecture/architecture.html#cleps-compute-nodes}{this documentation, nodes[01-20]}. Here rIRA uses the RGS algorithm for the orthogonalization process and sets as convergence criterion $\eta = 10^{-10}$.  The table indicates the number Nit of outer iterations required to reach this accuracy by eigs and rIRA and the number MVp of matrix-vector products with $\A$.  A / indicates that the accuracy was not reached within 1000 iterations. We run different experiments for different values of $\sa$ and $m$, and for two subsets of interest which are Largest Modulus or Smallest Modulus.
We note that there are several instances when rIRA is faster than eigs, although our code is sequential and not optimized.  Larger speedups are expected in parallel, since randomization and in particular RGS are known to reduce communication costs while maintaining stability.
In many cases there is a slight difference in the number of iteration between the two methods. Also, rIRA can converge in less iteration than IRA and still do more matrix-vector products. This can be explained by the fact that the two codes differ in many implementation details, for instance rIRA does not yet use deflation or other techniques to improve convergence. We emphasize that, in case of convergence, rIRA reaches the same accuracy of the residual norms of the eigenpairs as IRA for the matrices in our test set. In addition, there is no case where IRA converged and rIRA did not.  In almost all cases, rIRA is faster than IRA.

\begin{table}
    \centering
    \begin{tabular}{|c | c  |c| c| c| c| c| }
        \hline
        \textbf{k} & \textbf{m} & \textbf{Which} & \textbf{Method} & \textbf{Nit} & \textbf{MVp} & \textbf{Time (s)} \\    
        \hline    
        \hline
        \rowcolor{white} \multicolumn{7}{|c|}{\textbf{\href{https://sparse.tamu.edu/CEMW/tmt_unsym}{tmt\_unsym}} (Electromagnetic Problem) of size $n = 9 \times 10^5$ with $4.5 \times 10^6$ nonzeros.} \\
        \hline
        20 & 200 & LM & eigs & 98 &	13446 & 1417 \\
        \hline
        20 & 200 & LM & rIRA & 102 &	15350 &	\textbf{1296} \\
        \hline
        20 & 200 & SM & eigs & 74	& 12995	&  1149 \\
        \hline
        20 & 200 & SM & rIRA & 69 & 12440 & \textbf{1054} \\
        \hline
        20 & 100 & SM & eigs & 201	& 15360	&  929 \\
        \hline
        20 & 100 & SM & rIRA & 187 & 14980 & \textbf{855} \\
        \hline
        \hline
        \rowcolor{white} \multicolumn{7}{|c|}{\textbf{\href{https://sparse.tamu.edu/VLSI/vas_stokes_1M}{Vas\_stokes\_1M}}  (Semiconductor Process Problem) of size $n = 1.1 \times 10^6$ with $3.5 \times 10^7$ nonzeros.} \\
        \hline
        50 & 200 & LM & eigs & 7 & 985 & 	175  \\
        \hline
        50 & 200 & LM & rIRA & 7 & 1100 & \textbf{161}  \\
        \hline
        50 & 200 & SM & eigs & 336 & 40880 & 7727  \\
        \hline
        50 & 200 & SM & rIRA & 575 & 86300 & 12860  \\
        \hline
        \hline
        \rowcolor{white} \multicolumn{7}{|c|}{\textbf{\href{https://sparse.tamu.edu/Hamrle/Hamrle3}{Hamrle3}} (Circuit Simulation Problem) of size $n = 1.4 \times 10^6$ with $5.5 \times 10^6$ nonzeros.} \\
        \hline
        50 & 200 & LM & eigs & 19 & 2377 & 268  \\
        \hline
        50 & 200 & LM & rIRA & 18 & 2750 & 336  \\
        \hline
        50 & 200 & SM & eigs & / & / & /  \\
        \hline
        50 & 200 & SM & rIRA & / & / & /  \\
        \hline
        \hline
        \rowcolor{white} \multicolumn{7}{|c|}{\textbf{\href{https://sparse.tamu.edu/Bourchtein/atmosmodl}{atmosmodl}} (Computational Fluid Dynamics) of size $n = 1.5 \times 10^6$ with $1.0 \times 10^7$ nonzeros.} \\
        \hline
        50 & 200 & LM & eigs & 30 &	3853 & 725 \\
        \hline
        50 & 200 & LM & rIRA & 32 &	4850 &	\textbf{638} \\
        \hline
        50 & 200 & SM & eigs & 32 & 4124	&  740 \\
        \hline
        50 & 200 & SM & rIRA & 34 & 5150 & \textbf{687} \\
        \hline
        \hline
        \rowcolor{white} \multicolumn{7}{|c|}{\textbf{\href{https://sparse.tamu.edu/Janna/ML_Geer}{ML\_Geer}} (Strucural Problem)  of size $n = 1.5 \times 10^6$ with $1.1 \times 10^8$ nonzeros.} \\
        \hline
        50 & 200 & LM & eigs & 3 & 446 & 142  \\
        \hline
        50 & 200 & LM & rIRA & 3 & 500 & \textbf{134}  \\
        \hline
        50 & 200 & SM & eigs & / & / & /  \\
        \hline
        50 & 200 & SM & rIRA & / & / & /  \\
        \hline
        
    \end{tabular}
    \caption{Comparison in terms of number of iterations (Nit), Matrix-vector products (Mvp) and runtime between rIRA and eigs for an error tolerance $\eta = 10^{-10}$.}
    \label{tab:eigscomp}    
\end{table}

We end this section by comparing rIRA with another randomized eigensolver introduced initially in a simpler version in \cite{Balabanov2021RandomizedblockGram}, for which the Matlab code is available \href{https://github.com/obalabanov/randKrylov}{on GitHub (link)}. It is called randeigs and has some similarities with eigs, while restarting using a computed Schur decomposition of $\Hba$. However it does not use the shifted QR algorithm to update the factorization and has to fully restart by doing $m$ inner iterations on each outer iteration, compared to rIRA that does only $\nshi$ thanks to implicit restarting. More details on randeigs are given in its documentation. To compute the SM eigenvalues, it either uses shift and invert through an LU decomposition of $\A$ or a simpler shift strategy $\hat{\A} = I_n - A / (\eival_{max} + 0.1)$ when the LU is too difficult to compute, as for the vas\_stokes\_1M matrix discussed previously.  The experiments are collected in Table~\ref{tab:randeigscomp} and are carried out on a MacBook Pro with an M1 processor and 16 GB RAM. The main conclusions are that rIRA outperforms randeigs in LM cases and also in SM cases with the simple shift strategy, as it can be seen on matrices poli4 and lung2 where rIRA is the best method overall with up 3x speedup in both LM and SM cases, even with twice the number of iterations compared to randeigs.  This highlights the usefulness of the SM focus capacity of rIRA when no factorization of the input matrix is available. However when a stable LU factorization is available as with tmt\_unsym, shift and invert may outperform the inherent capacity of rIRA to focus on the smallest eigenvalues as shown with the tmt\_unsym result. Note that shift and invert can be implemented within rIRA to mitigate this situation, and then it will be possible to choose between preconditioning or the SM shifts strategy of rIRA.  

\begin{table}
    \centering
    \begin{tabular}{|c | c  |c| c| c| c|}
        \hline
        \textbf{k} & \textbf{m} & \textbf{Which} & \textbf{Method} & \textbf{Nit} & \textbf{Time (s)} \\    
        \hline
        \hline
        \multicolumn{6}{|c|}{\textbf{\textbf{\href{https://sparse.tamu.edu/Grund/poli4}{poli4}}} (Computational Fluid Dynamics) of size $n = 3.3 \times 10^4$ with $7.3 \times 10^4$ nonzeros.} \\
        \hline
        100 & 150 & SM & randeigs (shift \& invert) & 12 &  10.0 \\
        \hline
        100 & 150 & SM & randeigs (simple shift) & 16	&  32.2 \\
        \hline
        100 & 150 & SM & rIRA & 31 & \textbf{8.9} \\
        \hline    
        \hline
        \multicolumn{6}{|c|}{\textbf{\textbf{\href{https://sparse.tamu.edu/Norris/lung2}{lung2}}} (Computational Fluid Dynamics) of size $n = 1.1 \times 10^5$ with $5.0 \times 10^5$ nonzeros.} \\
        \hline
        10 & 100 & LM & randeigs & 6  & 7.1 \\
        \hline
        10 & 100 & LM & rIRA  & 6 & \textbf{2.1} \\
        \hline
        10 & 100 & SM & randeigs (shift \& invert) & 16	&  19.4 \\
        \hline
        10 & 100 & SM & randeigs (simple shift) & 6    &  14.9 \\
        \hline
        10 & 100 & SM & rIRA & 15	&  \textbf{5.9} \\
        \hline
        \hline
        \multicolumn{6}{|c|}{\textbf{\textbf{\href{https://sparse.tamu.edu/CEMW/tmt_unsym}{tmt\_unsym}}} (Electromagnetics Problem) of size $n = 9 \times 10^5$ with $4.5 \times 10^6$ nonzeros.} \\
        \hline
        10 & 100 & LM & randeigs & 24 & 209.7 \\
        \hline
        10 & 100 & LM & rIRA  & 44 & \textbf{108.9} \\
        \hline
        10 & 100 & SM & randeigs (shift \& invert) & 1 & 30.4 \\
        \hline
        10 & 100 & SM & randeigs (simple shift)	 &	 20 & 396.7 \\
        \hline
        10 & 100 & SM & rRIRA 	& 39 &  92.4 \\
        \hline
    \end{tabular}
    \caption{Comparison in terms of number of iterations (Nit), Matrix-vector products (Mvp) and runtime between rIRA and randeigs for an error tolerance $\eta = 10^{-10}$, or $\eta = 10^{-4}$ for tmt\_unsym.}
    \label{tab:randeigscomp}    
\end{table}

\section*{Acknowledgements}
This project has received funding from the European Research
Council (ERC) under the European Union’s Horizon 2020 research and innovation program (grant agreement No 810367).

\printbibliography

\end{document}